\def\COMMENT#1{}
\def\TASK#1{}
\def\noproof{{\unskip\nobreak\hfill\penalty50\hskip2em\hbox{}\nobreak\hfill%
        $\square$\parfillskip=0pt\finalhyphendemerits=0\par}\goodbreak}
\def\endproof{\noproof\bigskip}
\newdimen\margin   % needed for macros \textdisplay & \ltextdisplay
\def\textno#1&#2\par{%
    \margin=\hsize
    \advance\margin by -4\parindent
           \setbox1=\hbox{\sl#1}%
    \ifdim\wd1 < \margin
       $$\box1\eqno#2$$%
    \else
       \bigbreak
       \hbox to \hsize{\indent$\vcenter{\advance\hsize by -3\parindent
       \sl\noindent#1}\hfil#2$}%
       \bigbreak
    \fi}
\def\proof{\removelastskip\penalty55\medskip\noindent{\bf Proof. }}
\def\eps{\varepsilon}
\def\a{\alpha}
\def\b{\beta}
\def\d{\delta}
\def\g{\gamma}
\def\u{\underline}
\def\dB{\mathcal B _{n,k}}
\def\doB{\overline{\mathcal B} _{n,k}}
\newtheorem{firstthm}{Proposition}[section]
\newtheorem{thm}[firstthm]{Theorem}
\newtheorem{prop}[firstthm]{Proposition}
\newtheorem{lemma}[firstthm]{Lemma}
\newtheorem{cor}[firstthm]{Corollary}
\newtheorem{conj}[firstthm]{Conjecture}
\newtheorem{claim}[firstthm]{Claim}
\newtheorem{fact}[firstthm]{Fact}
\begin{document}
\title{Exact minimum degree thresholds for perfect matchings in uniform hypergraphs}
\author{Andrew Treglown and Yi Zhao}
\date{\today}

\begin{abstract}
Given positive integers $k$ and $\ell$ where $4$ divides $k$ and $k/2 \leq \ell \leq k-1$,
we give a  minimum $\ell$-degree condition that ensures a perfect matching in a $k$-uniform hypergraph.
This condition is best possible and improves on work of
Pikhurko who gave an asymptotically exact result. Our approach makes use of the absorbing method,
as well as the hypergraph removal lemma and a structural result of Keevash and Sudakov relating to the
Tur\'an number of the expanded triangle.
\end{abstract}

\maketitle
\section{Introduction}\label{sec1}
A \emph{perfect matching} in a hypergraph $H$ is a collection of vertex-disjoint edges of $H$ which cover the vertex set $V(H)$ of $H$.
It is unlikely that there exists a characterisation of all those $k$-uniform hypergraphs that contain a perfect matching for $k\geq 3$.
Indeed, Garey and Johnson~\cite{garey} showed that the decision problem whether
a $k$-uniform hypergraph contains a perfect matching is NP-complete for $k\geq 3$.
(In contrast, a theorem of Tutte~\cite{tutte} gives a characterisation of all those graphs which contain a perfect matching.)
It is natural therefore to seek simple sufficient conditions that ensure a perfect matching in a $k$-uniform hypergraph.

Given a $k$-uniform hypergraph $H$ with an $\ell$-element vertex set $S$ (where $0 \leq \ell \leq k-1$) we define
$d_H (S)$ to be the number of edges containing $S$. The \emph{minimum $\ell$-degree $\delta _{\ell}
(H)$} of $H$ is the minimum of $d_H (S)$ over all $\ell$-element sets of vertices in $H$. Clearly $\delta_0(H)$ is the number of edges in $H$. We also
refer to  $\delta _1 (H)$ as the \emph{minimum vertex degree} of $H$ and  $\delta _{k-1}
(H)$ the \emph{minimum codegree} of $H$.

One of the earliest results on perfect matchings was given by Daykin and H\"aggkvist \cite{dayhag}, who showed that
a $k$-uniform hypergraph $H$ on $n$ vertices contains a perfect matching provided that $\delta_1(H)\ge (1 - 1/k)\binom{n-1}{k-1}$.
Recently there has been much interest in establishing minimum $\ell$-degree thresholds  that force a perfect matching in
a $k$-uniform hypergraph. See~\cite{rrsurvey} for a survey on matchings (and Hamilton cycles) in hypergraphs. In particular,
R\"odl, Ruci\'nski and Szemer\'edi~\cite{rrs} determined the minimum codegree threshold that ensures  a perfect matching in
a $k$-uniform hypergraph for all $k\ge 3$. The threshold is $n/2 - k + C$, where $C\in \{3/2, 2, 5/2, 3\}$ depends on the values of $n$ and $k$. This improved bounds given in~\cite{ko1, rrs1}.
A $k$-partite version was proved by Aharoni, Georgakopoulos and Spr\"ussel~\cite{ags}.

K\"uhn, Osthus and Treglown~\cite{KOTmatch} and independently
Khan~\cite{khan1} determined the precise minimum vertex degree threshold that forces a perfect matching in a $3$-uniform hypergraph.
(This improved on an ``asymptotically exact'' result of
H\`an, Person and Schacht~\cite{hps}.)
Recently a $3$-partite version was proved by Lo and~Markstr\"om~\cite{lomark}.
Khan~\cite{khan2} has also determined the exact minimum vertex degree threshold for $4$-uniform
hypergraphs.
(Lo and Markstr\"om~\cite{lomark2} have a proof of an approximate version of this result.)
For $k\geq 5$, the precise minimum vertex degree threshold which ensures a perfect matching in a $k$-uniform hypergraph is not
known.

The situation for $\ell$-degrees where $1 < \ell < k-1$ is also still open.
H\`an, Person and Schacht~\cite{hps} provided conditions on $\delta _{\ell} (H)$ that ensure a perfect matching in the case
when $1 \leq \ell< k/2$. These bounds were subsequently lowered by Markstr\"om and Ruci\'nski~\cite{mark}.
Recently, Alon et al.~\cite{afh} gave a connection between the minimum $\ell$-degree
that forces a perfect matching in a $k$-uniform hypergraph and the  minimum $\ell$-degree
that forces a \emph{perfect fractional matching}. As a consequence of this result they determined, asymptotically,
the minimum $\ell$-degree which forces a perfect matching in a $k$-uniform hypergraph for the following values of
$(k, \ell)$: $(4, 1)$, $(5, 1)$, $(5, 2)$, $(6, 2)$, and  $(7, 3)$.

Pikhurko~\cite{pik} showed that if $\ell \geq k/2$ and $H$ is a $k$-uniform hypergraph whose order $n$ is divisible by $k$ then
$H$ has a perfect matching provided that $\delta _{\ell} (H) \geq (1/2+o(1))\binom{n}{k-\ell}$. This result is best possible up to the $o(1)$-term
(see the constructions in $\mathcal H_{\text{ext}} (n,k)$ below).

In this paper we strengthen Pikhurko's result for $k$-uniform hypergraphs when $4$ divides $k$.
In order to state our results, we need more definitions. Fix a set $V$ of $n$ vertices. Given a partition $V$ into non-empty sets $A, B$,
let $E_{\text{odd}}(A, B)$ ($E_{\text{even}}(A, B)$) denote the family of all $k$-element subsets of $V$ that intersect $A$ in an odd (even) number of vertices.
(Notice that the ordering of the vertex classes $A,B$ is important.)
Define $\mathcal B_{n,k}(A,B)$ to be the $k$-uniform hypergraph with vertex set $V= A \cup B$ and edge set $E_{\text{odd}} (A,B)$.
%We refer to $A$ and $B$ as the \emph{vertex classes of $\mathcal B_{n,k}(A,B)$.}
Note that the complement $\overline{\mathcal B}_{n,k} (A,B)$ of $\mathcal B_{n,k} (A,B)$ has edge set $E_{\text{even}} (A,B)$.
%In the case when $|A|=\lfloor n/2 \rfloor$ and $|B|=\lceil n/2 \rceil$ we will often denote $\mathcal B_{n,k}(A,B)$ by $\mathcal B_{n,k}$ and $\overline{\mathcal B}_{n,k} (A,B)$ by $\overline{\mathcal B}_{n,k} $.

Suppose $n,k \in \mathbb N$ such that $k$ divides $n$ and $k\geq 2$. Define $\mathcal H_{\text{ext}} (n,k)$ to be the collection of the following hypergraphs. First, $\mathcal H_{\text{ext}} (n,k)$ contains all hypergraphs $\doB (A,B)$ where $|A|$ is odd. Second, if $n/k$ is odd 
then $\mathcal H_{\text{ext}} (n,k)$ also contains all hypergraphs $\dB (A,B)$ where $|A|$ is even; if $n/k$ is even then $\mathcal H_{\text{ext}} (n,k)$ also contains all hypergraphs $\dB (A,B)$ where $|A|$ is odd.

It is easy to see that no hypergraph in $\mathcal H_{\text{ext}} (n,k)$ contains a perfect matching. Indeed, first assume that $|A|$ is even and $n/k$ is odd. Since every edge of ${\mathcal B}_{n,k}(A, B)$ intersects $A$ in an odd number of vertices, one cannot cover $A$ with an odd number of disjoint odd sets. Similarly $\mathcal B_{n,k} (A, B)$ does not
contain a perfect matching if $|A|$ is odd and $n/k$ is even. Finally, if $|A|$ is odd then since every edge of $\overline{\mathcal B}_{n,k}(A,B)$ intersects $A$ in an even number of vertices, $\overline{\mathcal B}_{n,k}(A, B)$ does not contain a perfect matching.

Given $\ell \in \mathbb N$ such that $k/2 \leq  \ell \leq k-1$ define $\delta (n,k, \ell)$ to be the maximum of the minimum $\ell$-degrees among all the hypergraphs in  $\mathcal H_{\text{ext}} (n,k)$. For example, it is not hard to see that
\begin{equation} \label{eq:nk1}
\delta(n, k, k-1) = \left\{\begin{array}{ll}
{n}/{2} - k + 2 & \text{if $k/2$ is even and $n/k$ is odd}\\
{n}/{2} - k + {3}/{2} & \text{if $k$ is odd and $(n-1)/{2}$ is odd}\\
{n}/{2} - k + {1}/{2} & \text{if $k$ is odd and $(n-1)/{2}$ is even} \\
{n}/{2} - k + 1 & \text{otherwise.}
\end{array} \right.\end{equation}

The following is our main result.
\begin{thm}\label{mainthm} Suppose $r, \ell \in \mathbb N$ such that $2r\leq \ell \leq 4r-1$. Then
there exists an $n_0 \in \mathbb N$ such that the following holds. Suppose $H$ is a $4r$-uniform hypergraph on
$n \geq n_0$ vertices where $4r$ divides $n$. If
$$\delta _{\ell} (H) > \delta (n,4r,\ell)$$
then $H$ contains a perfect matching.
\end{thm}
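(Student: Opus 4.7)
The plan is to combine the absorbing method with a stability dichotomy, following the template that has become standard for exact minimum $\ell$-degree thresholds for matchings in hypergraphs. Fix constants $0 < \eta \ll \alpha \ll 1/(4r)$. The first step is to construct, inside any $H$ satisfying the degree hypothesis, a small \emph{absorbing matching} $M \subseteq H$ such that for every $U \subseteq V(H) \setminus V(M)$ with $|U| \leq \eta n$ and $4r \mid |U|$, the hypergraph $H[V(M) \cup U]$ has a perfect matching. The existence of such $M$ under the given degree condition is by now routine: one shows that almost every $4r$-subset of $V(H)$ admits polynomially many absorbers and then selects $M$ via a probabilistic argument, in the style of H\`an--Person--Schacht or Pikhurko.

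With $M$ fixed, the argument splits according to how close $H$ is to the extremal configurations. Call $H$ \emph{extremal} if there is a bipartition $V(H) = A \cup B$ with $|A|$ of the appropriate parity such that $H$ agrees with some member of $\mathcal H_{\text{ext}}(n,4r)$ on all but $\alpha n^{4r}$ edges, and \emph{non-extremal} otherwise. In the non-extremal case I would apply an almost-perfect-matching argument to $H - V(M)$ to cover all but $\eta n$ vertices and then use $M$ to absorb the leftover set (whose size is divisible by $4r$ since $4r \mid n$ and $M$ is chosen with $|V(M)|$ divisible by $4r$). For this step one needs a \emph{stability} version of Pikhurko's theorem, asserting that any $H$ with $\delta_\ell(H)$ near the Pikhurko threshold and no perfect matching must be extremal in the sense above. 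This is where the hypergraph removal lemma and the Keevash--Sudakov theorem on the Tur\'an number of the expanded triangle $T_3^{4r}$ enter: the Keevash--Sudakov extremal construction coincides with the parity constructions $\dB, \doB$ precisely when $4 \mid k$, so their structural result, cleaned up by the removal lemma, pins down the bipartition $(A,B)$.

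In the extremal case I would argue directly from the bipartition. The strict inequality $\delta_\ell(H) > \delta(n,4r,\ell)$, together with the fact that $\dB(A,B)$ and $\doB(A,B)$ exactly attain $\delta(n,4r,\ell)$, forces an abundance of $\ell$-sets $S$ lying in edges whose intersection with $A$ has the ``wrong'' parity compared to the extremal model. I plan to build a near-perfect matching $M'$ of $H - V(M)$ consisting almost entirely of edges following the parity pattern of the model, and then swap in a bounded number of wrong-parity edges to correct any global parity obstruction in the uncovered set; the absorber $M$ then completes the perfect matching. The bookkeeping here is delicate because the admissible parity of $|A|$ depends on the parity of $n/(4r)$, exactly as in the definition of $\mathcal H_{\text{ext}}(n,4r)$.

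I expect the main obstacle to be the stability lemma in the non-extremal case: showing that any $4r$-uniform $H$ with $\delta_\ell(H)$ only slightly below the Pikhurko threshold and without a perfect matching must be close to $\dB(A,B)$ or $\doB(A,B)$ for some bipartition. Converting the Keevash--Sudakov Tur\'an result into a matching-stability statement via the removal lemma is where the divisibility hypothesis $4 \mid k$ is genuinely used; the extremal case, while fiddly, is conceptually more direct.
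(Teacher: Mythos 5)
Your proposal inverts the order of quantifiers in a way that creates a genuine gap: you first build an absorbing matching $M$ and only then split into extremal and non-extremal cases, treating the existence of $M$ as "routine" under the degree hypothesis. But the absorbing matching cannot be constructed unconditionally here. The degree threshold $\delta(n,4r,\ell)$ is roughly $\tfrac12\binom{n-\ell}{4r-\ell}$ minus lower-order terms, and for hypergraphs at or just above this threshold that are close to $\mathcal B_{n,4r}(A,B)$ or $\overline{\mathcal B}_{n,4r}(A,B)$ there are $4r$-sets $Q$ with \emph{no} absorbers whatsoever: in $\overline{\mathcal B}_{n,4r}(A,B)$ with $|A|$ odd every edge meets $A$ in an even number of vertices, so if $|Q\cap A|$ is odd then no disjoint set $S$ can have both $S$ and $S\cup Q$ perfectly matched (the parities of $|S\cap A|$ and $|(S\cup Q)\cap A|$ differ). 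The standard probabilistic selection of $M$ requires $\Omega(n^{8r})$ absorbers for \emph{every} $4r$-set, and even hypergraphs $\eps$-close to the extremal constructions fail this badly. This is precisely where the paper's structural dichotomy lives: Theorem~\ref{nonexthm1} asserts that \emph{either} $H$ is $\eps$-close to $\mathcal B_{n,4r}$ or $\overline{\mathcal B}_{n,4r}$, \emph{or} the absorbing matching exists; proving that dichotomy (via the auxiliary graph $G(H)$, the removal lemma, and Keevash--Sudakov) is where all the work goes, and it cannot be bypassed.

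The downstream consequence is that your extremal-case plan also breaks, since it relies on the absorber $M$ to finish. The paper instead handles the extremal case (Theorem~\ref{extthm}) entirely without absorption: after moving "bad" vertices to the other side, it greedily removes small matchings to fix the residue of $|A|$ modulo $k$ (or $k-1$), using one "parity-breaking" edge $e_0$ whose existence is the unique place the strict inequality $\delta_\ell(H)>\delta(n,4r,\ell)$ is invoked, and then applies a deterministic covering lemma (Lemma~\ref{goodlem:r}/Corollary~\ref{cor:good}) on the cleaned-up bipartition. Your description of "swapping in wrong-parity edges" is morally this, but as you describe it you still need $M$ at the end, which does not exist. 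Finally, a smaller inaccuracy: Keevash--Sudakov is not applied to $H$ itself, nor does the divisibility $4\mid k$ arise because their extremal construction coincides with $\mathcal B_{n,k}$; rather, the paper 2-colors $\binom{V(H)}{2r}$ using the bipartition of $G(H)$, applies Keevash--Sudakov (after a removal-lemma cleanup using the expanded $4$-cycle $C_4^{2r}$) to the resulting complete $2r$-uniform hypergraph, and only then transfers the structure back to $H$; the reduction from $4r$-uniform $H$ to a $2r$-uniform auxiliary object is what restricts the argument to $k\in 4\mathbb{N}$.
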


As explained before, the minimum $\ell$-degree condition in Theorem~\ref{mainthm} is best possible.
When $k$ is divisible by $4$, Theorem~\ref{mainthm} and \eqref{eq:nk1} together give the aforementioned result of R\"odl, Ruci\'nski and Szemer\'edi \cite{rrs}.

In general, the precise value of $\delta (n,k, \ell)$ is unknown because it is not known what value of $|A|$ maximizes  the minimum $\ell$-degree of $\mathcal B _{n,k} (A, B)$ (or $\overline{\mathcal B} _{n,k} (A, B)$).  Clearly one needs to know the degree of every $\ell$-tuple of vertices from  $\mathcal B _{n,k} (A, B)$ to establish the minimum $\ell$-degree of $\mathcal B _{n,k} (A,B)$. Further, if one knows this then one can compute the  total number of edges in $\mathcal B _{n,k} (A, B)$.
However, for even $k$, it is shown in \cite[Section 3.1]{ keesud} that finding the value of $|A|$ that maximizes the number of edges in $\mathcal B _{n,k} (A,B)$ is equivalent to finding the minima of binary Krawtchouk polynomials, which is an open problem. Thus, this would suggest that calculating $\delta (n,k, \ell)$ is likely a challenging task.

In the appendix we give a tight upper bound on $\delta (n,4,2)$, which together with Theorem~\ref{mainthm} gives the minimum $2$-degree threshold
that forces a perfect matching in a $4$-uniform hypergraph. This result was recently independently proven by Czygrinow and Kamat \cite{czy}.

\begin{thm}\label{4thm} There exists an $n_0\in \mathbb N$ such that the following holds. Suppose that $H$ is a $4$-uniform
hypergraph on $n \geq n_0$ vertices where $n$ is divisible by $4$. If
$$\delta _2 (H) >\frac{n^2}{4}-\frac{5n}{4} - \frac{\sqrt{n-3}}{2}+\frac{3}{2}$$
then $H$ contains a perfect matching. Furthermore, this minimum degree condition is best possible.
\end{thm}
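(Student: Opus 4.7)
By Theorem~\ref{mainthm} applied with $r=1$ and $\ell=2$, it suffices to prove the upper bound
\[
\delta(n,4,2)\le \frac{n^2}{4}-\frac{5n}{4}-\frac{\sqrt{n-3}}{2}+\frac{3}{2},
\]
and, for the \emph{furthermore} clause, to exhibit (for an infinite sequence of $n$) a hypergraph in $\mathcal{H}_{\text{ext}}(n,4)$ whose minimum $2$-degree attains this value. The problem thus reduces to a finite optimisation over the explicit family $\mathcal{H}_{\text{ext}}(n,4)$.

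Write $a=|A|$, $b=n-a$, and enumerate the possible $2$-degrees of each candidate construction according to whether the pair lies in $\binom{A}{2}$, $A\times B$, or $\binom{B}{2}$. A direct count gives in $\overline{\mathcal{B}}_{n,4}(A,B)$
\[
d_{AA}=\binom{a-2}{2}+\binom{b}{2},\quad d_{AB}=(a-1)(b-1),\quad d_{BB}=\binom{b-2}{2}+\binom{a}{2},
\]
and in $\mathcal{B}_{n,4}(A,B)$
\[
d_{AA}=(a-2)b,\quad d_{AB}=\binom{a-1}{2}+\binom{b-1}{2},\quad d_{BB}=a(b-2).
\]
Both families are symmetric under swapping $A$ and $B$, so I may assume $a\ge b$ and write $a=n/2+c$ with $c\ge 0$. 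In $\overline{\mathcal{B}}_{n,4}(A,B)$ a short calculation yields $d_{BB}-d_{AA}=4c\ge 0$, so the minimum $2$-degree equals $\min(d_{AA},d_{AB})$, while in $\mathcal{B}_{n,4}(A,B)$ one has $d_{AA}-d_{BB}=4c\ge 0$, so the minimum equals $\min(d_{AB},d_{BB})$.

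The heart of the argument is a one-variable balancing. For $\overline{\mathcal{B}}_{n,4}(A,B)$ one obtains $d_{AB}=n^2/4-c^2-n+1$ and $d_{AA}=n^2/4+c^2-3n/2-2c+3$; these are equal at the unique non-negative root $c^{\ast}=(1+\sqrt{n-3})/2$ of $d_{AB}=d_{AA}$, where both equal $n^2/4-5n/4-\sqrt{n-3}/2+3/2$. Since $d_{AB}$ strictly decreases in $c$ and $d_{AA}$ strictly increases for $c\ge 1$, for $n$ sufficiently large the maximum of $\min(d_{AA},d_{AB})$ over $c\ge 0$ is attained at $c^{\ast}$, giving the claimed upper bound. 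A parallel calculation for $\mathcal{B}_{n,4}(A,B)$ (balancing $d_{AB}$ against $d_{BB}$) has crossing $c^{\ast}=(-1+\sqrt{n-3})/2$ and yields the same common value. Restricting $c$ to integers of the correct parity can only shrink the maximum, so $\delta(n,4,2)\le n^2/4-5n/4-\sqrt{n-3}/2+3/2$.

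For the tightness assertion I would specialise to $n=4(4m^2+2m+1)$, for which $\sqrt{n-3}=4m+1$ and $c^{\ast}=2m+1$ is a positive odd integer. Since $4\mid n$ forces $n/2$ to be even, $|A|=n/2+c^{\ast}$ is odd, so $\overline{\mathcal{B}}_{n,4}(A,B)\in\mathcal{H}_{\text{ext}}(n,4)$; direct substitution shows its minimum $2$-degree equals the integer $n^2/4-5n/4-\sqrt{n-3}/2+3/2$ exactly, so the degree condition in Theorem~\ref{4thm} cannot be weakened. The main technical nuisance is the parity conditions in the definition of $\mathcal{H}_{\text{ext}}(n,4)$: they restrict which integer $c$ is admissible (so can only lower the maximum, posing no obstacle to the upper bound), but they must be respected when producing the tight example, which is why one singles out the special sequence above.
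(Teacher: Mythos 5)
Your proposal is correct and follows essentially the same route as the paper's Appendix: reduce via Theorem~\ref{mainthm} to bounding $\delta(n,4,2)$, tabulate the three possible $2$-degrees in $\mathcal B_{n,4}(A,B)$ and $\overline{\mathcal B}_{n,4}(A,B)$, use monotonicity in the imbalance parameter $c=t$ to locate the crossing at $(\pm 1+\sqrt{n-3})/2$, and then choose $n$ so that $(1+\sqrt{n-3})/2$ is an odd integer to exhibit $\overline{\mathcal B}_{n,4}(t)\in\mathcal H_{\text{ext}}(n,4)$ attaining the bound. The only cosmetic differences are your use of the shift parameter $c$ in place of the paper's $t$ and your choice $n=(4m+1)^2+3$ for the tight sequence (the $s=1$ instance of the paper's more general $n=(4m+1)^{2s}+3$); you also elide the explicit check that the $c=0$ case is dominated for $n\ge 12$, which the paper writes out but which your ``$n$ sufficiently large'' covers.
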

%Note that Theorem~\ref{4thm}, together with the results in \cite{khan2,rrs} characterise the minimum $\ell$-degree threshold that forces a perfect matching in a $4$-uniform hypergraph for all $1 \leq \ell \leq 3$.
Note that Theorem~\ref{4thm}, together with the results of R\"odl, Ruci\'nski and Szemer\'edi~\cite{rrs} and Khan~\cite{khan2},
characterize the minimum $\ell$-degree threshold that forces a perfect matching in a $4$-uniform hypergraph for all
$1 \leq \ell \leq 3$.

\medskip

The overall strategy for the proof of Theorem~\ref{mainthm} is similar to that of R\"odl, Ruci\'nski and Szemer\'edi in~\cite{rrs}, which in turn is typical for proving sharp results.
Indeed, we split the argument into `extremal' and `non-extremal' cases, and use the absorbing method developed by R\"odl, Ruci\'nski and Szemer\'edi~\cite{rrs2} in the non-extremal case. However, our non-extremal case is somewhat different from \cite{rrs}.
We concentrate on the $\ell = 2r$ case and study the structure of an auxiliary graph $G(H)$,
whose vertices are all $2r$-subsets of $V(H)$, and two $2r$-sets $U, W$ are joined by an edge if and only if $U\cup W\in E(H)$.
Furthermore, we use the hypergraph removal lemma (see e.g.~\cite{gowers, rodlskokan}) and a
structural result of Keevash and Sudakov~\cite{keesud}. %(see Section~\ref{end} for precise details).

In fact, the proof of Theorem~\ref{mainthm} is such that most of the argument extends to a more general setting. For example,
we deal with the extremal case for $k$-uniform hypergraphs for all integers $k\geq 2$. Several parts of the non-extremal
case also generalize to $2r$-uniform hypergraphs (where $r \in \mathbb N$). Thus, it seems likely that our methods may be
useful in making Pikhurko's result exact for $k$-uniform hypergraphs for all $k \geq 2$.

%In view of our analysis of the extremal case we propose the following conjecture.
\begin{conj}\label{generalconj}
Suppose $k, \ell \in \mathbb N$ such that $k/2\leq \ell \leq k-1$. Then
there exists an $n_0 \in \mathbb N$ such that the following holds. Suppose $H$ is a $k$-uniform hypergraph on
$n \geq n_0$ vertices where $k$ divides $n$. If
$$\delta _{\ell} (H) > \delta (n,k,\ell)$$
then $H$ contains a perfect matching.
\end{conj}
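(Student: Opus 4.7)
The plan is to follow the extremal versus non-extremal split developed for Theorem~\ref{mainthm}, extending each step to arbitrary $k\ge 2$. As noted by the authors, their extremal case already handles every $k\ge 2$: once $H$ is $\eps$-close (in edge differences) to some member of $\mathcal H_{\text{ext}}(n,k)$, the hypothesis $\delta_\ell(H) > \delta(n,k,\ell)$ suffices to produce a perfect matching by a direct rebalancing argument. Furthermore, it should be enough to treat the smallest allowed value $\ell = \lceil k/2 \rceil$, since for larger $\ell$ a standard averaging/slicing gives an essentially equivalent $\lceil k/2 \rceil$-degree hypothesis. So the real task reduces to the non-extremal case at $\ell = \lceil k/2 \rceil$.

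For even $k = 2r$ I would retain the authors' framework. Form the auxiliary graph $G(H)$ on vertex set $\binom{V(H)}{r}$, joining two disjoint $r$-subsets $U,W$ whenever $U \cup W \in E(H)$. Then $\delta(G(H)) = \delta_r(H)$, so the minimum $r$-degree hypothesis becomes an almost-Dirac condition on $G(H)$. The R\"odl--Ruci\'nski--Szemer\'edi absorbing lemma~\cite{rrs2} adapts to this setting, so one only needs to produce an almost-perfect matching of $H$ from a non-extremal $G(H)$. Applying the hypergraph removal lemma together with the Keevash--Sudakov structural theorem~\cite{keesud} for the Tur\'an problem of the expanded triangle, one should obtain either $\eps$-extremality (contradicting the case assumption) or enough structure to build the almost-perfect matching. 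The technical subtlety is that when $r$ is odd, $\dB(A,B)$ and $\doB(A,B)$ do not line up as cleanly with the expanded-triangle extremal construction as when $4 \mid k$; one must check that the Keevash--Sudakov dichotomy still isolates the correct ``majority class'' of edges to feed the matching argument.

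For odd $k$ one loses the natural halving of an edge, and a different auxiliary object is needed. A natural candidate is the bipartite structure on $\binom{V(H)}{\lfloor k/2 \rfloor} \cup \binom{V(H)}{\lceil k/2 \rceil}$, with $U$--$W$ joined whenever $U \cup W \in E(H)$; the degree hypothesis then lives on the $\lceil k/2 \rceil$-side. One would need a bipartite analogue of the expanded-triangle stability theorem to drive the non-extremal argument. An alternative is to argue via perfect fractional matchings in the spirit of Alon et al.~\cite{afh}: first show that the degree hypothesis forces a perfect fractional matching, then locally round it to an integral one using the absorbing structure already supplied by the absorbing lemma.

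The main obstacle in either direction is supplying the structural input at the non-extremal step for those $k$ not divisible by $4$. When $4 \nmid k$ the parity conditions in the definition of $\mathcal H_{\text{ext}}(n,k)$ become more delicate, and the Keevash--Sudakov dichotomy as currently formulated may leave room for near-extremal configurations that are neither captured by the extremal case nor directly digestible by the matching construction. I expect that overcoming this --- either by proving a refined stability theorem for the expanded triangle (plausibly via the container or regularity machinery) or by replacing the Keevash--Sudakov step with a carefully controlled fractional-matching argument --- is the heart of Conjecture~\ref{generalconj}.
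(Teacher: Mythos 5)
The statement you are addressing is Conjecture~\ref{generalconj}: it appears in the paper as an open conjecture, not a theorem, and the paper contains no proof of it. What you have written is a research plan rather than a proof, and it does not close the gap that the authors themselves identify. Concretely: the extremal case (Theorem~\ref{thm:ext}), the Absorbing Lemma (Lemma~\ref{lem:abs}) and the Lemma on $G$ (Lemma~\ref{lem:G}) are indeed already proved for general $k$ (respectively general even $k=2r$), so those parts of your plan restate what is in the paper. The step that genuinely fails outside $4\mid k$ is the analogue of Lemma~\ref{lem:GH} (via Lemma~\ref{lem:str}): there the near-bipartition $R\cup B$ of $V(G(H))=\binom{V(H)}{2r}$ is read as a $2$-colouring of the complete $2r$-uniform hypergraph on $V(H)$, and the Keevash--Sudakov theorem for the expanded triangle $\mathcal C^{2r}_3$ is applied to one colour class to recover a vertex bipartition of $V(H)$. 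For $k=2r$ with $r$ odd the same construction would require a structure theorem for $r$-uniform hypergraphs of \emph{odd} uniformity, where the expanded triangle does not even exist (an odd set cannot be split into two equal halves); this is precisely why the authors can only treat $4r$-uniform hypergraphs. Your remark that the Keevash--Sudakov dichotomy ``may leave room for near-extremal configurations'' understates the problem: no input theorem of the required type is available, and you do not supply one.

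For odd $k$ the situation is worse still, and neither of your suggested substitutes works as stated. A bipartite auxiliary structure on $\binom{V}{\lfloor k/2\rfloor}\cup\binom{V}{\lceil k/2\rceil}$ would need a bipartite stability theorem that, as you concede, is not known. The fractional-matching route of Alon et al.\ can only yield asymptotic thresholds of the form $(1/2+o(1))\binom{n}{k-\ell}$, which is already Pikhurko's result; the entire content of the conjecture is the exact threshold $\delta(n,k,\ell)$ and the parity obstruction encoded by $\mathcal H_{\text{ext}}(n,k)$, which a fractional relaxation cannot detect. One further caution: your reduction to $\ell=\lceil k/2\rceil$ by averaging is legitimate only in the non-extremal case, where a bound of the form $(1/2-\alpha)\binom{n-\ell}{k-\ell}$ suffices; the extremal case uses the full hypothesis $\delta_\ell(H)>\delta(n,k,\ell)$ for the given $\ell$ to find the parity-breaking edge $e_0$, and this cannot be recovered from a $\lceil k/2\rceil$-degree bound. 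In short, the conjecture remains open, and your proposal leaves it exactly where the paper does.
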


%%%%%%%%%%%%%%%%%%%%
\section{Notation and preliminaries}\label{sec2}
\subsection{Definitions and notation}
Given a set $X$ and an integer $r\ge 2$, we write $\binom{X}{r}$ for the set of all $r$-element subsets ($r$-subsets, for short) of $X$.
Let $k, \ell \in \mathbb N$. Suppose $H=(V,E)$ is a $k$-uniform hypergraph. Let $\{v_1, \dots, v_l \}$ be an $\ell$-subset of $V(H)$. Often we will use the notation $\u{v}$, for example, to abbreviate $\{v_1\dots v_\ell\}$. When it is clear from the context we may also write $v_1\dots v_{\ell}$ (i.e. we drop the brackets). Given
$\underline{v}\in \binom{V(H)}{\ell}$,  we write $N_H(\underline{v})$ or $N (\underline{v})$ to denote the \emph{neighborhood of $\u{v}$}, that is, the family of those $(k-\ell)$-subsets of $V(H)$ which, together with $\u{v}$, form an edge in $H$. Then $|N_H(\underline{v})| = d_H(\underline{v})$. When considering $\ell$-degree together with $\ell'$-degree for some $\ell'\neq \ell$, the following proposition is very useful (the proof is a standard counting argument, which we omit).

\begin{prop} \label{prop:deg}
Let $0\le \ell \le \ell' < k$ and $H$ be a $k$-uniform hypergraph. If $\d_{\ell'}(H)\geq x\binom{n- \ell'}{k- \ell'}$ for some $0\le x\le 1$, then $\d_{\ell}(H)\geq x\binom{n- \ell}{k- \ell}$.
\end{prop}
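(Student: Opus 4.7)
The plan is to carry out the standard double-counting argument that converts an $\ell'$-degree bound into an $\ell$-degree bound. Fix an arbitrary $\ell$-set $\u{v}\in \binom{V(H)}{\ell}$; the goal is to show $d_H(\u{v})\ge x\binom{n-\ell}{k-\ell}$. To do this I would count, in two different ways, the number $P$ of pairs $(\u{w},e)$ with $\u{w}\in \binom{V(H)}{\ell'}$, $e\in E(H)$, and $\u{v}\subseteq \u{w}\subseteq e$.

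Counting by $\u{w}$ first: there are exactly $\binom{n-\ell}{\ell'-\ell}$ choices of an $\ell'$-set $\u{w}$ containing $\u{v}$, and by hypothesis each such $\u{w}$ sits in at least $\d_{\ell'}(H)\ge x\binom{n-\ell'}{k-\ell'}$ edges of $H$. Counting by $e$ instead: every edge $e$ with $\u{v}\subseteq e$ contributes exactly $\binom{k-\ell}{\ell'-\ell}$ admissible $\u{w}$'s, so $P = d_H(\u{v})\binom{k-\ell}{\ell'-\ell}$. Combining the two counts gives
$$d_H(\u{v})\binom{k-\ell}{\ell'-\ell}\;\ge\;x\binom{n-\ell}{\ell'-\ell}\binom{n-\ell'}{k-\ell'}.$$

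The proof then finishes with the elementary identity $\binom{n-\ell}{\ell'-\ell}\binom{n-\ell'}{k-\ell'} = \binom{n-\ell}{k-\ell}\binom{k-\ell}{\ell'-\ell}$, which holds because both sides count ordered pairs of disjoint subsets $A,B$ of an $(n-\ell)$-element ground set with $|A|=\ell'-\ell$ and $|B|=k-\ell'$ (on the right, one chooses $A\cup B$ as a $(k-\ell)$-subset and then picks out the distinguished $(\ell'-\ell)$-subset $A$). Dividing both sides of the displayed inequality by $\binom{k-\ell}{\ell'-\ell}$ yields $d_H(\u{v})\ge x\binom{n-\ell}{k-\ell}$, as required. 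There is no real obstacle to speak of; the only thing to note is the boundary cases $\ell=0$ or $\ell=\ell'$, both of which are immediate and for which the identity above remains valid with the convention $\binom{0}{0}=1$.
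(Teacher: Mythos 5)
Your proof is correct, and it is precisely the ``standard counting argument'' that the paper explicitly omits: double-counting pairs $(\u{w},e)$ with $\u{v}\subseteq\u{w}\subseteq e$ and applying the identity $\binom{n-\ell}{\ell'-\ell}\binom{n-\ell'}{k-\ell'}=\binom{n-\ell}{k-\ell}\binom{k-\ell}{\ell'-\ell}$. No issues.
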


We denote the \emph{complement of $H$} by $\overline{H}$. That is, $\overline{H} := (V(H), \binom{V(H)}{k}\setminus E(H))$. Given a set $A \subseteq V(H)$, $H[A]$ denotes the $k$-uniform subhypergraph of $H$ \emph{induced by $A$}, namely, $H[A] := (A, E(H)\cap \binom{A}{k})$. We define $H\setminus A: = H[V(H)\setminus A]$. Given $B \subseteq E(H)$, we define $H[B] := (V(H), B)$.

Let $\eps>0$. Suppose that $H$ and $H'$ are $k$-uniform hypegraphs on $n$ vertices. We say that $H$ is \emph{$\eps$-close to $H'$}, and write $H= H'\pm \eps n ^{k}$, if $H$ becomes a copy of $H'$ after adding and deleting at most $\eps n^k$ edges. More precisely, let $A\triangle B:= (A\setminus B) \cup (B\setminus A)$ denote the \emph{symmetric difference} of two sets $A$ and $B$. Then $H$ is $\eps$-close to $H'$ if there is an isomorphic copy $\tilde{H}$ of $H$ such that $V(\tilde{H}) = V(H')$ and $|E(\tilde{H})\triangle E(H')| \le \eps n^k$.

Given a graph $G$, $x \in V(G)$ and $Y \subseteq V(G)$, we denote by $d_G (x,Y)$ the number of vertices $y \in Y$ such that
$xy \in E(G)$. A bipartite graph is called \emph{balanced} if its vertex classes have equal size.

We will often write $0<a_1 \ll a_2 \ll a_3$ to mean that we can choose the constants
$a_1,a_2,a_3$ from right to left. More
precisely, there are increasing functions $f$ and $g$ such that, given
$a_3$, whenever we choose some $a_2 \leq f(a_3)$ and $a_1 \leq g(a_2)$, all
calculations needed in our proof are valid.
Hierarchies with more constants are defined in the obvious way.
Throughout the paper we omit floors and ceilings whenever this does not affect the argument.

\subsection{The extremal graphs $\mathcal B _{n,k}$ and $\mathcal B _{n,k}(t)$.}
Given a $k$-uniform hypergraph $H$ and a partition $A,B$ of $V(H)$, an edge $e$ of $H$ is called an \emph{$A^r B^{k-r}$ edge}
if $| e\cap A|= r$ and $| e\cap B| = k-r$. An $A^r B^{k-r}$ edge is called an \emph{$(A,B)$-even edge} if $r$ is even;
otherwise we call such an edge \emph{$(A,B)$-odd}. We refer to such edges as $\emph{even}$ and $\emph{odd}$
respectively when it is clear from the context what our partition of $V(H)$ is.
Two edges of $H$ have the same \emph{parity} if both are even or both are odd. As defined earlier, $E_{\text{odd}}(A, B)$ ($E_{\text{even}}(A, B)$) is the family of all $(A, B)$-odd (-even) edges.

Suppose that $n\in \mathbb N$ such that $n\geq k \geq 2$. Let $A,B$ be a partition of a set of $n$ vertices.
Recall that $\mathcal B_{n,k}(A,B)$ is the $k$-uniform hypergraph with vertex set $A \cup B$ and edge set $E_{\text{odd}} (A,B)$, and its complement $\overline{\mathcal B}_{n,k}(A, B)$ has edge set $E_{\text{even}} (A,B)$.
When $|A|=\lfloor n/2 \rfloor$ and $|B|=\lceil n/2 \rceil$, we simply denote $\mathcal B_{n,k}(A,B)$ by $\mathcal B_{n,k}$, and $\overline{\mathcal B}_{n,k}(A, B)$ by $\overline{\mathcal B}_{n,k}$.
When $|A|=\lfloor n/2 \rfloor+t$ and $|B|=\lceil n/2 \rceil-t$ for some integer $t$ such that $-\lfloor n/2 \rfloor <t < \lceil n/2 \rceil$, we may denote $\mathcal B_{n,k}(A,B)$ by $\mathcal{B}_{n,k}(t)$. We refer to $A$ and $B$ as the \emph{vertex classes of} $\mathcal B_{n,k}$ and $\mathcal{B}_{n,k}(t)$.

%Let $\eps >0$. Suppose that $H$ is a $k$-uniform hypergraph on $n$ vertices. Then we say that $H$ is \emph{in the extremal case with parameter $\eps$} if either $H$ is $\eps$-close to $\mathcal B_{n,k}$ or $H$ is $\eps$-close to $\overline{\mathcal B}_{n,k}$.

\subsection{Absorbing sets}
Following the ideas of R\"odl, Ruci\'nski and Szemer\'edi \cite{rrs2, rrs}, we define \emph{absorbing sets} as follows:
Given a $k$-uniform hypergraph $H$,  a set $S\subseteq V(H)$ is called an \emph{absorbing set for $Q\subseteq V(H)$}, if both $H[S]$ and $H[S\cup Q]$ contain perfect matchings. In this case, if the matching covering $S$ is $M$, we also say \emph{$M$ absorbs $Q$}.

When constructing our absorbing sets in Section~\ref{sectionnon}
we will use the following Chernoff bound for binomial
distributions (see e.g.~\cite[Corollary 2.3]{Janson&Luczak&Rucinski00}).
Recall that the binomial random variable with parameters $(n,p)$ is the sum
of $n$ independent Bernoulli variables, each taking value $1$ with probability $p$
or $0$ with probability $1-p$.

\begin{prop}\label{chernoff}
Suppose $X$ has binomial distribution and $0<a<3/2$. Then
$\mathbb{P}(|X - \mathbb{E}X| \ge a\mathbb{E}X) \le 2 e^{-\frac{a^2}{3}\mathbb{E}X}$.
\end{prop}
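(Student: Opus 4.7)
The plan is to prove this via the standard Chernoff moment-generating-function method. Write $X = X_1 + \dots + X_n$ as a sum of independent Bernoulli variables with $\mathbb{P}(X_i = 1) = p_i$, and set $\mu = \mathbb{E}X = \sum_i p_i$. For any $t > 0$ and $a > 0$, exponential Markov gives $\mathbb{P}(X \geq (1+a)\mu) \leq e^{-t(1+a)\mu}\,\mathbb{E}[e^{tX}]$; by independence together with the elementary inequality $1 + p(e^t - 1) \leq \exp(p(e^t - 1))$, the moment generating function satisfies $\mathbb{E}[e^{tX}] \leq \exp(\mu(e^t - 1))$.

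Choosing $t = \log(1+a)$ optimises the right-hand side and yields the classical one-sided bound $\mathbb{P}(X \geq (1+a)\mu) \leq \exp(-\mu f(a))$ with $f(a) := (1+a)\log(1+a) - a$. A mirror-image argument with $t < 0$ produces $\mathbb{P}(X \leq (1-a)\mu) \leq \exp(-\mu g(a))$ where $g(a) := a + (1-a)\log(1-a)$; here one only needs $0 < a < 1$, since the event $X \leq (1-a)\mu$ is vacuous for $a \geq 1$ (as $X \geq 0$).

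What remains is the elementary analytic comparison $f(a), g(a) \geq a^2/3$ on the interval $0 < a < 3/2$. For the lower tail this is immediate from the series expansion $g(a) = \sum_{k \geq 2} a^k / (k(k-1)) \geq a^2/2$. For the upper tail one uses the Bennett-type refinement $f(a) \geq a^2 / (2 + 2a/3)$, which rearranges to an equivalent polynomial inequality that can be verified by standard calculus (for instance by examining the second derivative of $f(a)(2 + 2a/3) - a^2$). The threshold $a = 3/2$ in the hypothesis is exactly the point at which $a^2 / (2 + 2a/3) = a^2 / 3$; this is what pins down the constant.

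Finally, a union bound over the two one-sided estimates introduces the factor of $2$ in the stated inequality. The only mildly delicate point is the calibration of the constant $3$ in the exponent to cover the full range $a \in (0, 3/2)$; beyond this purely calculus-style check, the argument is entirely routine and involves no combinatorial input. For this reason the authors simply cite Janson–Łuczak–Ruciński rather than reproduce the calculation in the paper.
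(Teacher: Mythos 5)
The paper does not prove this proposition at all; it is invoked as a black box with a citation to Janson, {\L}uczak and Ruci\'nski, so there is no proof in the text to compare against. Your argument is the standard Chernoff moment-generating-function derivation, which is essentially how the cited reference proves it, and it is correct: the MGF bound $\mathbb{E}[e^{tX}]\le\exp(\mu(e^t-1))$, optimisation at $t=\log(1\pm a)$, the expansion $g(a)=\sum_{k\ge2}a^k/(k(k-1))\ge a^2/2$, the Bennett-type bound $f(a)\ge a^2/(2+2a/3)$, and the observation that $2+2a/3\le 3$ precisely for $a\le 3/2$ all check out, and the union bound supplies the factor $2$.

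One small inaccuracy worth flagging: you say the lower-tail event $X\le(1-a)\mu$ is \emph{vacuous} for $a\ge1$, but at $a=1$ it is the event $\{X=0\}$, which has probability $(1-p)^n\le e^{-\mu}$, not zero. This does not damage the argument, since $e^{-\mu}\le e^{-a^2\mu/3}$ at $a=1$ (and $g(a)\to1>1/3$ as $a\to1^-$, so the bound is continuous across the boundary), but the claim as written is false on that boundary point. You should also restrict the assertion ``$g(a)\ge a^2/3$ on $(0,3/2)$'' to $(0,1)$, the domain on which $g$ is actually defined and used.
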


\subsection{Two structural results for hypergraphs}
In Section~\ref{seccy} we will show that if our hypergraph $H$ does not contain a certain type of absorbing set  then $H$
is in the extremal case. To deduce this, we will obtain structural information about two auxiliary (hyper)graphs. This
 will in turn provide structural information about $H$. The following two powerful results will be required for this.
\begin{thm}[Hypergraph Removal Lemma ~\cite{gowers, rodlskokan}]
\label{thm:RS}
Let $\gamma >0$ and $k, t \in \mathbb N$ such that $2\leq k \leq t$. Given any $k$-uniform hypergraph $F$ on $t$ vertices,
there exists $\a= \a (F,\gamma ) >0$ and $n_0=n_0 (F ,\gamma) \in \mathbb N$ such that the following holds.
Suppose $H$ is a $k$-uniform hypegraph on $n \geq n_0$ vertices such that $H$ contains at most $\a n^t$ copies of $F$. Then $H$
can be made $F$-free by deleting at most $\gamma n^k$ edges.
\end{thm}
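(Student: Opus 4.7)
The plan is to follow the now-standard strategy of deriving a removal lemma from an appropriate \emph{regularity lemma} together with a matching \emph{counting lemma}. This is the route taken in both \cite{gowers} and \cite{rodlskokan}, and I would organise the argument in the same way.

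First, I would apply a $k$-uniform hypergraph regularity lemma to $H$. Unlike the graph case, this requires a hierarchical partition: one partitions not only $V(H)$ but also auxiliary $j$-uniform structures on pairs, triples, $\ldots$, $(k-1)$-tuples. The output is a ``regular'' partition in which all but a small proportion of $k$-tuples of blocks are $\eps$-regular in the appropriate sense, with the number of blocks bounded by a Wowzer/Ackermann-type function $T(k,\eps)$ independent of $n$. The parameters $\eps_2, \eps_3, \ldots, \eps_{k-1}$ governing regularity at each level must be chosen in a precise cascading order, each depending on everything that came before.

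Second, I would perform a \emph{cleaning} step: delete every edge of $H$ that lies in a $k$-tuple of partition blocks which is either irregular, has density below a threshold $d=d(\gamma,F)$, or is small (involves a block of size $o(n)$). By standard counting, the total number of deleted edges is at most $\gamma n^k$ provided $\eps$ and $d$ are chosen small enough in terms of $\gamma, F$ and $n_0$ is sufficiently large.

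Third, I would invoke the hypergraph counting lemma for the cleaned hypergraph $H'$. The counting lemma guarantees that if each $k$-tuple of blocks supporting an edge of the ``reduced'' hypergraph is $\eps$-regular with density at least $d$, then any configuration in the reduced hypergraph that could host $F$ lifts to at least $c(F,d)\,(n/m)^t$ genuine copies of $F$ in $H'$, where $m=m(\gamma)$ is the number of parts. Hence, if $H'$ still contains any copy of $F$, then $H$ already contains at least $c(F,d)\,(n/m)^t$ copies of $F$; choosing $\alpha = \alpha(F,\gamma)$ smaller than this constant yields a contradiction with the assumption of at most $\alpha n^t$ copies. Therefore $H'$ must be $F$-free, proving the lemma.

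The main obstacle, and the reason this theorem is deep rather than a routine consequence of graph regularity, lies in setting up the hypergraph regularity hierarchy together with a counting lemma that respects it: one needs compatible notions of regularity at every level $j = 2, 3, \ldots, k-1$ so that the counting lemma can be proved by induction on uniformity, and the cascading choice of parameters $\eps_j, d, m, \alpha, n_0$ must be threaded consistently through the whole hierarchy. This is precisely the technical core of \cite{gowers, rodlskokan}, and in the present paper the lemma is used only as a black box.
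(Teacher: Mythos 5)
The paper does not prove Theorem~\ref{thm:RS} at all --- it is quoted as a black box from \cite{gowers, rodlskokan} --- and your outline correctly reproduces the standard regularity-plus-counting-plus-cleaning argument by which those references establish it. Your sketch is accurate at the level of a proof strategy and rightly identifies the hypergraph regularity/counting machinery as the technical core that is being taken on faith, so there is nothing to compare against within this paper itself.
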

Given $r \in \mathbb N$,
let $\mathcal C ^{2r} _3$ denote the \emph{expanded $2r$-uniform triangle}. That is, $\mathcal C^{2r} _3$ consists of
three disjoint sets $P_1, P_2, P_3$ of vertices of size $r$, and the edges $P_1 \cup P_2$, $P_2 \cup P_3$, $P_3 \cup P_1$.
Keevash and Sudakov~\cite{keesud} used the following theorem  to prove a conjecture of Frankl~\cite{frank} concerning the Tur\'an number
of $\mathcal C^{2r} _3$.
\begin{thm}[\cite{keesud}]
\label{thm:Si}
For every $\gamma>0$ and $r \in \mathbb N$, there exists $\b=\b(\gamma, r)>0$ such that if $H$ is a $\mathcal C^{2r} _3$-free $2r$-uniform hypergraph on $n$ vertices
with
$$e(H) >\left(\frac12 - \b\right)\binom{n}{2r},$$
then $H= \mathcal B_{n,2r} \pm \gamma n^{2r}$.
\end{thm}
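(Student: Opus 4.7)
The plan is to reduce this stability statement to the Erd\H{o}s--Simonovits stability theorem for the graph triangle, via an auxiliary graph built on $r$-subsets of $V(H)$. Specifically, define $G$ with $V(G) = \binom{V(H)}{r}$ and $\{S,T\} \in E(G)$ whenever $S \cap T = \emptyset$ and $S \cup T \in E(H)$. Each edge of $H$ contributes exactly $\tfrac{1}{2}\binom{2r}{r}$ edges to $G$, and a triangle in $G$ on three pairwise disjoint $r$-sets corresponds precisely to a copy of $\mathcal{C}^{2r}_3$ in $H$; hence no such configuration exists in $G$. A short calculation shows the density of $G$, measured as $e(G)$ divided by the number of unordered disjoint $r$-pairs in $V(H)$, equals $e(H)/\binom{n}{2r} \ge 1/2 - \beta$, which matches the extremal density achieved by $\mathcal{B}_{n,2r}$.

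To invoke graph stability, I would sample a random family $\mathcal{M} \subset V(G)$ of pairwise disjoint $r$-sets (for instance by randomly partitioning $V(H)$ into $\lfloor n/r \rfloor$ blocks of size $r$). Then $G[\mathcal{M}]$ is an honest triangle-free graph with expected density close to $1/2 - \beta$. By the Erd\H{o}s--Simonovits stability theorem for triangles (or by applying the hypergraph removal lemma, Theorem~\ref{thm:RS}, with $F=K_3$ and pushing to Mantel's bound), $G[\mathcal{M}]$ is $o(1)$-close to the complete bipartite graph $K_{|\mathcal{M}|/2,|\mathcal{M}|/2}$, producing a natural 2-coloring of the $r$-sets in $\mathcal{M}$. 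Averaging over many such samples should yield a 2-coloring $\chi$ of almost all of $\binom{V(H)}{r}$ consistent with the edges of $G$.

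Finally, I must show that $\chi$ is induced by a parity rule $\chi(S) \equiv |S \cap A| \pmod 2$ for some bipartition $V(H) = A \cup B$ with $|A| \approx n/2$. Comparing two $r$-sets that differ in exactly one vertex forces $\chi$ to flip precisely when the two differing vertices lie on opposite sides of the hypothetical partition; a majority/voting argument across many such overlapping pairs assigns each vertex of $V(H)$ a consistent side, modulo a small exceptional set. Once $A, B$ are fixed, comparison of $H$ with $\mathcal{B}_{n,2r}(A,B)$ via the density hypothesis yields $|E(H)\triangle E(\mathcal{B}_{n,2r}(A,B))| \le \gamma n^{2r}$. The main obstacle lies precisely in this last step: descending a 2-coloring of $r$-sets to a canonical bipartition of the ground set with the correct parity structure, while controlling the exceptional $r$-sets where the bipartite structure of $G$ is imperfect, and ensuring that $|A|$ is forced to be close to $n/2$ rather than some skewed value.
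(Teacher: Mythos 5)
This theorem is cited from Keevash and Sudakov \cite{keesud}; the paper under review does not prove it, so there is no internal proof to compare against. I will therefore assess your sketch on its own terms.

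The opening reduction is sound in spirit, but one step is redundant and the crucial step is missing. The redundancy: you do not need to sample disjoint families to obtain an ``honest'' triangle-free graph. By your own definition of $G$, an edge $\{S,T\}$ of $G$ already requires $S\cap T=\emptyset$; hence \emph{any} triangle $\{S,T,U\}$ in $G$ automatically has $S,T,U$ pairwise disjoint and thus corresponds to a copy of $\mathcal C^{2r}_3$ in $H$. So $\mathcal C^{2r}_3$-freeness of $H$ makes $G$ itself triangle-free, with no sampling required. Moreover your density computation shows $e(G) \geq (1/4 - \beta/2 - O(1/n))N^2$ (since $\binom{2r}{r}\binom{n}{2r} = \binom{n}{r}\binom{n-r}{r}$ and $\binom{n-r}{r}=(1-O(1/n))\binom{n}{r}$), so one can apply the Erd\H{o}s--Simonovits stability theorem directly to $G$ and obtain a single global bipartition of $\binom{V(H)}{r}$ up to $o(N^2)$ errors; the sampling-and-patching route only introduces an unnecessary consistency problem between samples.

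The genuine gap, which you correctly identify, is the descent from a near-bipartition $R\cup B$ of $\binom{V(H)}{r}$ to a vertex bipartition $A\cup B$ of $V(H)$ with the parity structure $\chi(S)\equiv |S\cap A|\pmod 2$. This is the technical heart of the Keevash--Sudakov argument and cannot be dispatched by a brief appeal to ``majority/voting''. In the clean (error-free) case one can make it work: writing $\chi:\binom{V(H)}{r}\to GF(2)$ and noting that for pairwise disjoint $r$-sets $S,T,U$ the three values $\chi(S)+\chi(T)$, $\chi(T)+\chi(U)$, $\chi(S)+\chi(U)$ are determined by the $H$-membership of the three unions, one finds that for every $(r-1)$-set $P$ the function $v\mapsto \chi(P\cup\{v\})$ is, up to a $P$-dependent additive constant, a fixed $\{0,1\}$-labelling $\sigma$ of $V(H)$; this forces $\chi(S)=\sum_{v\in S}\sigma(v)$ and gives $A=\sigma^{-1}(1)$, with $|A|\approx n/2$ then forced by the density hypothesis. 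But when the bipartition of $G$ is only approximate (with $o(N^2)$ exceptional pairs), each of these steps must be done with quantitative error control, and the exceptional $r$-sets must be localised to a small set of bad vertices. Without this bookkeeping the argument is not complete, and as written your sketch does not show how to carry it out.
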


\section{Proof of Theorem~\ref{mainthm}}
Most of the  paper is devoted to the proof of the following two results: we will prove Theorem~\ref{nonexthm1} in Section~\ref{sectionnon} and Theorem~\ref{extthm} in Section~\ref{sec:ext}.

\begin{thm}\label{nonexthm1}
Let $\eps >0$ and $r, \ell \in \mathbb N$ such that $2r \leq \ell \leq 4r-1$.
Then there exist $\a, \xi >0$ and $n_0 \in \mathbb N$ such that the following holds.
Suppose that $H$ is a $4r$-uniform hypergraph on $n \geq n_0$ vertices where $4r$ divides $n$. If
$$\delta _{\ell} (H) \geq \left( \frac{1}{2}-\a \right) \binom{n-\ell}{4r-\ell}$$
then  $H$ is $\eps$-close to $\mathcal B_{n,4r}$ or $\overline{\mathcal B}_{n,4r}$, or $H$ contains
a matching $M$ of size $|M| \le \xi n/(4r)$ that absorbs any set $W\subseteq V(H) \setminus V(M)$ such that $|W| \in 4r\mathbb{N}$ with $|W| \le \xi^2 n$.
\end{thm}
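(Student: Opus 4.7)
The plan is to adapt the absorbing method of R\"odl, Ruci\'nski and Szemer\'edi, combined with a structural dichotomy driven by the Hypergraph Removal Lemma and the Keevash--Sudakov stability theorem. Using Proposition~\ref{prop:deg} I first reduce to $\ell = 2r$, so that we may assume $\d_{2r}(H) \geq (\tfrac12 - \a)\binom{n-2r}{2r}$. This is the natural regime for the auxiliary graph $G(H)$ on vertex set $\binom{V(H)}{2r}$ with edge set $\{UW : U \cup W \in E(H)\}$; the degree hypothesis gives every vertex of $G(H)$ degree at least $(\tfrac12 - \a)\binom{n-2r}{2r}$ among $2r$-sets disjoint from it, so $G(H)$ has density close to $\tfrac12$.

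For a $4r$-set $X \subseteq V(H)$, I call a disjoint $4r$-set $S$ an \emph{$X$-absorber} if $S \in E(H)$ and $S \cup X$ admits a partition into two disjoint edges of $H$. The core of the argument is the following dichotomy: either $H$ is $\eps$-close to $\mathcal{B}_{n,4r}$ or $\overline{\mathcal{B}}_{n,4r}$, or every $4r$-set $X \subseteq V(H)$ admits at least $\gamma n^{4r}$ absorbers for some $\gamma = \gamma(\a,\eps) > 0$. Granted the dichotomy, the matching $M$ is produced by a standard random sampling argument: include each $4r$-subset of $V(H)$ independently with probability $p = \Theta(\xi / n^{4r-1})$, and use Proposition~\ref{chernoff} to verify that with positive probability the resulting family $\mathcal F$ has $|\mathcal F| \leq \xi n/(4r)$, contains only $O(\xi^2 n) = o(\xi n)$ intersecting pairs, and still contains at least $\xi^2 n/(4r)$ absorbers of every $X$. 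Deleting one set from each intersecting pair and any element of $\mathcal F$ that is not an edge of $H$ yields the matching $M$, which absorbs any compatible $W$ by assigning each of its $|W|/(4r) \leq \xi^2 n/(4r)$ pieces to a previously unused absorber in $M$.

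The heart of the proof is the dichotomy itself. I focus on \emph{path-type absorbers}: given an equi-partition $X = X_1 \cup X_2$, these are ordered pairs $(A, B)$ of disjoint $2r$-subsets of $V(H)\setminus X$ such that $A \cup X_1$, $A \cup B$ and $B \cup X_2$ are all in $E(H)$, equivalently length-$3$ paths $X_1 - A - B - X_2$ in $G(H)$ whose four vertices are pairwise disjoint in $V(H)$. Assume $H$ is $\eps$-far from both $\mathcal{B}_{n,4r}$ and $\overline{\mathcal{B}}_{n,4r}$, and for simplicity that $e(H) \leq (\tfrac12 + \b)\binom{n}{4r}$ (the very dense sub-case is handled by an easier direct count). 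Applying the Hypergraph Removal Lemma (Theorem~\ref{thm:RS}) with $F = \mathcal{C}^{4r}_3$ and the Keevash--Sudakov theorem (Theorem~\ref{thm:Si}, with parameter $2r$ in place of $r$) once to $H$ and once to $\overline H$ shows that both contain $\Omega(n^{6r})$ copies of $\mathcal{C}^{4r}_3$: were either count below the appropriate threshold, that hypergraph would be within $o(n^{4r})$ edges of a $\mathcal{C}^{4r}_3$-free hypergraph of density $\geq 1/2 - o(1)$, which the stability theorem would force close to $\mathcal{B}_{n,4r}$, contradicting the assumption. A convexity/Cauchy--Schwarz argument on $G(H)$ then converts these global triangle and non-triangle counts into the lower bound $\Omega(n^{4r})$ for the number of length-$3$ paths $X_1 - A - B - X_2$ at every $X$; the pairwise disjointness constraint among $A, B, X_1, X_2$ loses only an $O(n^{4r-1})$ correction.

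The main obstacle is this last combinatorial step, which must turn the \emph{global} $\mathcal{C}^{4r}_3$ counts in $G(H)$ and $G(\overline H)$ into a \emph{local} lower bound valid at every choice of $X$ and equi-partition thereof, rather than merely on average. The disjointness book-keeping required to match triangles in the auxiliary graph with genuine copies of $\mathcal{C}^{4r}_3$ in the hypergraph, together with the need to split into density regimes of $H$ (since $\overline H$ is too sparse for Theorem~\ref{thm:Si} in the very dense sub-case), is the source of most of the technical overhead.
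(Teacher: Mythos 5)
Your scaffolding matches the paper's at a high level (reduce to $\ell=2r$ via Proposition~\ref{prop:deg}, work in the auxiliary graph $G(H)$ on $\binom{V(H)}{2r}$, random greedy construction of the absorbing matching as in Lemma~\ref{lem:abs}, and use Theorems~\ref{thm:RS} and~\ref{thm:Si} for the structural dichotomy), but the core step is genuinely different from the paper's and, as written, has a real gap. You commit to \emph{path-type} absorbers only: a $4r$-set $S$ with $S\in E(H)$ and $S\cup X$ the union of two edges, i.e.\ a length-$3$ path $X_1-A-B-X_2$ in $G(H)$. In the paper's terminology (Lemma~\ref{lem:G} applied with $2r$ in place of $r$), these are exactly the ``absorbing $2r$-sets'' of Claim~\ref{clm:ab}, and the paper explicitly does \emph{not} rely on these alone. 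In Case~(b) of Claim~\ref{clm:ab}, when there are many high-degree $r$-tuples $\Lambda$ but the good-pair condition \eqref{eq:good} fails for a given $Q$, the paper cannot produce a $\gamma^3 n^{2r}$ supply of these small absorbers and instead constructs the larger ``absorbing $4r$-sets'' (which in the $4r$-uniform setting become $8r$-sets), using $\Lambda$ as a bridge. Your dichotomy (either $H$ is close to $\dB$/$\doB$, or \emph{every} equi-partitioned $X$ has $\Omega(n^{4r})$ path-type absorbers) is strictly stronger than what the paper proves, and nothing in your proposal rules out the paper's Case~(b) scenario.

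This is closely tied to the second gap, which you flag yourself: the step that is supposed to turn the global counts ``$H$ and $\overline H$ each have $\Omega(n^{6r})$ copies of $\mathcal C^{4r}_3$'' into the local bound ``every $X$ has $\Omega(n^{4r})$ length-$3$ paths'' is not a convexity/Cauchy--Schwarz consequence. Global triangle counts in $G(H)$ and $G(\overline H)$, together with a minimum degree $\geq (\tfrac12-\a)N$, simply do not pin down $p_3(u,v)$ for individual pairs $u,v$: averaging bounds go the wrong way, and the pairs you worry about are precisely the outliers. The paper sidesteps this by never trying to deduce local path counts from a global triangle count. Instead it works in the opposite direction: Claims~\ref{clm:ab} and~\ref{clm:abG} and Lemma~\ref{lem:abG} analyze what the \emph{failure} of absorbers at a single pair forces about $G(H)$ (conditions (a),(b) of Lemma~\ref{lem:abG}), concluding $G$ or $\overline G$ is close to $K_{N/2,N/2}$ by elementary counting; only then are Theorems~\ref{thm:RS} and~\ref{thm:Si} invoked, and not on $H$ and $\overline H$, but on the two \emph{colour classes} $K[R],K[B]$ of $\binom{V(H)}{2r}$ induced by the bipartition of $G(H)$, via the bad expanded $4$-cycle count (Lemma~\ref{lem:str}). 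To repair your argument along its current lines you would either need to prove the stronger dichotomy directly (and your global-counting route will not do it), or introduce the longer absorbing configuration and reproduce the paper's Case~(b) analysis, after which you are essentially following the paper's proof.
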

Notice that the minimum $\ell$-degree condition in Theorem~\ref{nonexthm1} is weaker than that in Theorem~\ref{mainthm}.
Theorem~\ref{nonexthm1} says that either $H$ contains a reasonably small absorbing set which can absorb any small set of vertices or $H$ is `close' to $\mathcal B_{n,4r}$ or $\overline{\mathcal B}_{n,4r}$. The next result shows that in the latter, `extremal case', $H$
contains a perfect matching.

\begin{thm}\label{extthm}
Given $1\le \ell \le k-1$, there exist $\eps > 0$ and $n_0 \in \mathbb{N}$ such that the following holds. Suppose that $H$ is a $k$-uniform hypergraph on $n\ge n_0$ vertices such that $n$ is divisible by $k$. If  $\delta_{\ell} (H) >  \d(n, k, \ell)$ and $H$ is $\eps$-close to $\mathcal B_{n,k}$ or $\overline{\mathcal B}_{n,k}$, then $H$ contains a perfect matching.
\end{thm}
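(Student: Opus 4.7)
The plan is to split into two sub-cases according to whether $H$ is $\eps$-close to $\dB$ or to $\doB$; the two arguments are essentially symmetric, so I describe only the $\doB$ case. The first step is to refine the partition. Starting from a partition $(A_0,B_0)$ that witnesses the $\eps$-closeness, a standard swapping argument---iteratively moving any vertex with an excess of $(A,B)$-odd (``bad'') edges into the other class---produces a partition $(A,B)$ with $\bigl||A|-\lfloor n/2\rfloor\bigr|=O(\sqrt{\eps}\,n)$ in which every vertex is incident to at most $\eta n^{k-1}$ bad edges, for some $\eps\ll\eta\ll 1$. After this refinement the overwhelming majority of edges at every vertex and at every $\ell$-set are $(A,B)$-even.

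Next I would carry out the parity accounting. A perfect matching using only $(A,B)$-even edges covers $A$ as a sum of even-sized intersections, and hence requires $|A|$ to be even; in the $\dB$-case the analogous requirement is $|A|\equiv n/k\pmod 2$. If the parity requirement already holds (the ``good'' sub-case), I proceed directly to completion. Otherwise, $\doB(A,B)\in\mathcal H_{\text{ext}}(n,k)$, so $\delta_\ell(\doB(A,B))\le\delta(n,k,\ell)<\delta_\ell(H)$; since $\doB(A,B)$ contains no $(A,B)$-odd edges whatsoever, this strict inequality forces $H$ to contain $(A,B)$-odd edges, and more precisely a positive surplus of them at every $\ell$-set where the minimum of $d_{\doB(A,B)}$ is attained. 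From these I extract a short matching $M_0$ of $(A,B)$-odd edges whose removal leaves $|A\setminus V(M_0)|$ with the parity needed for an even-edge completion.

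Finally, set $V':=V(H)\setminus V(M_0)$ and let $H'$ be the spanning subhypergraph of $H[V']$ whose edges are the $(A,B)$-even edges of $H$. Since every vertex of $V'$ is incident to at most $\eta n^{k-1}$ non-even edges and $|A\cap V'|$ now has the correct parity, $H'$ closely approximates the complete ``even-intersection'' template on $V'$. A perfect matching in $H'$ can then be assembled greedily: at each step, pick an edge of the intersection type $A^{2i}B^{k-2i}$ required to keep the residual class sizes in the right proportion, with a Hall-type argument guaranteeing feasibility throughout. The main obstacle I anticipate is in the bad-parity case: one must verify, through a delicate direct count tailored to each parity configuration of $|A|$, $k$, and $n/k$, that $\delta(n,k,\ell)$ is realized precisely by the forbidden member of $\mathcal H_{\text{ext}}(n,k)$ in every configuration, so that the strict surplus $\delta_\ell(H)>\delta(n,k,\ell)$ really does provide enough well-positioned odd edges to build the parity-correcting $M_0$.
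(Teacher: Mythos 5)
Your outline tracks the overall shape of the paper's proof of Theorem~\ref{extthm} (via Theorem~\ref{thm:ext}) — locate a partition making most edges the majority parity, correct parity using one edge of the opposite type supplied by the strict degree hypothesis, then complete — but several of the key technical steps are either missing or would not go through as described.

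First, the refinement step does not deliver what you claim. Iteratively moving a vertex with an excess of minority-parity edges terminates (the total count of minority edges strictly decreases), but at termination you only get that each vertex has \emph{at most half} of its incident edges of minority parity; you cannot conclude ``at most $\eta n^{k-1}$.'' There can remain $O(\eps_1 n)$ ``mixed'' vertices with $\Theta(n^{k-1})$ edges of each parity, and no sequence of swaps will ever make such a vertex good for either side. The paper deals with this by \emph{not} trying to eliminate bad vertices: it identifies the $\le \eps_1 n$ vertices that are $\eps_2$-bad with respect to $\doB$, moves them across (which turns their many odd edges into $(A_1,B_1)$-even ones), and then \emph{covers them with a small matching $M_1$} (Step~3) before any completion argument is applied. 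Your plan needs an analogous matching to remove the remaining bad vertices; without it, the hypothesis of the completion step fails.

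Second, the arithmetic correction is genuinely more than a single parity bit, and your description of it as ``keeping the residual class sizes in the right proportion'' glosses over the hardest bookkeeping. The paper has to arrange, depending on $k$, conditions like $|A_3|\equiv 0 \pmod k$, or $|A_3|=|B_3|$ with $|A_3|\equiv 0 \pmod k$, or $|A_3|\equiv 0 \pmod{k-1}$; this is achieved by removing edges of carefully chosen intersection types $A^{s}B^{k-s}$ (with $s$ of the correct parity so they exist in abundance), with a case split on $k$ even, $k/2$ even vs.\ odd, and $k$ odd. Your single parity observation about $|A|$ is correct but is only the first of these conditions. Also, the strict inequality $\delta_\ell(H) > \delta_\ell(\doB(A,B))$ only gives you \emph{one} odd edge through a minimizing $\ell$-set, not a ``positive surplus at every minimizing $\ell$-set''; the paper needs exactly one such edge $e_0$, and then has a separate contingency (Step~5) for the case $e_0 \cap V_0 \neq \emptyset$.

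Third, and most seriously, the completion step cannot be done by a naive greedy/Hall argument. After a greedy matching has absorbed almost all vertices, the number of remaining $k$-subsets is tiny compared with the per-vertex allowance $\eta n^{k-1}$ of missing edges, so the greedy step can get stuck. The paper uses a non-trivial switching lemma (Lemma~\ref{goodlem:r}, following~\cite[Fact~4.1]{rrs}): given a maximum matching using only $A^rB^{k-r}$ edges, if it is not perfect one first shows the leftover is small, then finds $k-1$ matching edges that are simultaneously ``feasible'' for a fixed leftover $k$-set, and swaps to produce $k$ new edges, contradicting maximality. It is this lemma (packaged as Corollary~\ref{cor:good}), not a greedy/Hall argument, that finishes the proof, and your proposal would need to supply something equivalent.
\medskip
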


The following result of Markstr\"om and Ruci\'nski~\cite{mark} is needed in the `non-extremal' case.
\begin{thm}[Lemma 2 in~\cite{mark}]\label{marklemma}
For each integer $k\ge 3$, every $1 \leq \ell \leq k-2$ and every $\gamma >0$ there exists an  $n_0 \in \mathbb N$
such that the following holds.
Suppose that $H$ is a $k$-uniform hypergraph on $n\geq n_0$ vertices such that
$$\delta _{\ell} (H) \geq \left(\frac{k-\ell}{k} - \frac{1}{k^{(k-\ell)}}+\gamma \right)\binom{n-\ell}{k-\ell}.$$
Then $H$ contains a matching covering all but at most $\sqrt{n}$ vertices.
\end{thm}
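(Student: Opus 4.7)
The plan is to argue by contradiction. Let $M$ be a maximum matching in $H$ and set $U:=V(H)\setminus V(M)$; the goal is to show $|U|\le\sqrt n$, so suppose for contradiction $|U|>\sqrt n$. By maximality of $M$, no edge of $H$ lies inside $U$, hence every edge containing an $\ell$-subset of $U$ must meet $V(M)$.

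The first step is a double count over $\ell$-subsets of $U$. Summing the hypothesis over $\underline v\in\binom U\ell$ gives
$$\sum_{\underline v\in\binom U\ell} d_H(\underline v)\ge \binom{|U|}{\ell}\left(\frac{k-\ell}{k}-\frac{1}{k^{k-\ell}}+\gamma\right)\binom{n-\ell}{k-\ell}.$$
Each edge $e$ counted on the left decomposes as $e=\underline v\cup T$ with $\underline v\in\binom U\ell$ and $T$ a $(k-\ell)$-set meeting $V(M)$. I would focus on the edges whose set $T$ lies inside a single matching edge $f\in M$: these are exactly the edges enabling a one-step augmentation, since swapping $f$ out of $M$ and replacing it by $\underline v\cup T$ together with a second edge $e'$ covering the displaced vertices $f\setminus T$ would yield a matching of size $|M|+1$. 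Averaging over the $m=|M|$ edges of $M$ and the $\binom{k}{k-\ell}$ choices of $T\subseteq f$, some pair $(f,T)$ hosts an abundance of $\ell$-sets $\underline v\subseteq U$ with $\underline v\cup T\in E(H)$; the minimum $\ell$-degree condition applied to $f\setminus T$ then supplies a candidate second edge $e'\supseteq f\setminus T$, and the slack $|U|>\sqrt n$ together with $m=(n-|U|)/k$ guarantees that $e'$ can be chosen disjoint from $\underline v\cup T$ and from the other matching edges.

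The main obstacle is obtaining the constant $1/k^{k-\ell}$ exactly rather than a weaker power of $1/k$. I expect this to require an induction on $k-\ell$: the base case $\ell=k-2$ should follow fairly directly from the double count above, and the inductive step reduces the level-$\ell$ augmentation problem to the level-$(\ell+1)$ one by fixing a well-chosen vertex of $V(H)$ and passing to the link hypergraph, losing a factor of $1/k$ in the error term at each step. Propagating the margin $\gamma$ and the tolerance $\sqrt n$ intact through the recursive steps, so that the compounded error term is exactly $1/k^{k-\ell}$ and not merely $O(1/k^2)$, is the technical heart of the argument; Proposition~\ref{prop:deg} should be useful at each level to translate the $\ell$-degree hypothesis into a usable $(\ell+1)$-degree bound on the link.
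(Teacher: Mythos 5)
First, a point of reference: the paper does not prove Theorem~\ref{marklemma} at all. It is imported verbatim as Lemma~2 of Markstr\"om and Ruci\'nski~\cite{mark}, with only the remark that their proof (stated for $\ell<k/2$) extends to all $1\le\ell\le k-2$. So there is no in-paper argument to compare yours against, and your proposal must be judged on its own terms. On those terms it is not yet a proof: the step you yourself identify as ``the technical heart'' (the induction on $k-\ell$ that is supposed to produce the constant $1/k^{k-\ell}$) is conjectured rather than carried out, and two of the steps you do describe fail concretely.

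The first failure is the restriction to neighbours $T$ lying inside a single matching edge $f$. The degree hypothesis gives each $\underline{v}\in\binom{U}{\ell}$ at least $(\frac{k-\ell}{k}-\frac{1}{k^{k-\ell}}+\gamma)\binom{n-\ell}{k-\ell}=\Theta(n^{k-\ell})$ neighbours, and maximality of $M$ forces each neighbour to meet $V(M)$, but nothing forces any neighbour to be contained in a single edge of $M$: the total number of $(k-\ell)$-subsets of edges of $M$ is at most $|M|\binom{k}{k-\ell}=O(n)$, which is negligible against $n^{k-\ell}$ precisely because $k-\ell\ge 2$ in the range $\ell\le k-2$. So the collection of pairs $(f,T)$ over which you average may well be empty; the double count gives no lower bound on it. The second failure is the edge $e'$. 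For $M-f+(\underline{v}\cup T)+e'$ to be a larger matching, the $k-\ell$ vertices of $e'$ outside $f\setminus T$ must lie in $U\setminus\underline{v}$. The minimum $\ell$-degree of $f\setminus T$ only locates its neighbours among all $\binom{n-\ell}{k-\ell}$ candidate $(k-\ell)$-sets, of which only $\binom{|U|}{k-\ell}$ lie inside $U$; when $|U|$ is close to $\sqrt{n}$ --- exactly the case the theorem requires you to rule out --- this is $O(n^{(k-\ell)/2})$, a vanishing fraction, so no admissible $e'$ need exist. Both failures stem from the same issue: a one-edge local swap is too weak here, and the genuine argument in~\cite{mark} instead works with several disjoint $\ell$-sets of $U$ simultaneously and counts $(k-\ell)$-sets according to how they distribute over the edges of $M$; that global count is where the constant $\frac{k-\ell}{k}-\frac{1}{k^{k-\ell}}$ actually comes from. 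If you want to complete the proof, I would start from Lemma~2 of~\cite{mark} rather than from this sketch.
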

In~\cite{mark}, Markstr\"om and Ruci\'nski only stated Theorem~\ref{marklemma} for $1\leq \ell <k/2$. In fact, their proof works for all values of $\ell$ such that $1 \leq \ell \leq k-2$. In the case when $\ell = k-1$, we need a result of R\"odl, Ruci\'nski and Szemer\'edi~\cite[Fact 2.1]{rrs}: if $\delta_{k-1}(H)\ge n/k$, then $H$ contains a matching covering all but at most $k^2$ vertices in $H$.

We now show that, to prove Theorem~\ref{mainthm}, it suffices to prove Theorems~\ref{nonexthm1} and~\ref{extthm}.
\medskip

\noindent
{\bf Proof of Theorem~\ref{mainthm}.}
Let $\varepsilon$ be as in Theorem~\ref{extthm} and $\a, \xi$ be as in Theorem~\ref{nonexthm1}.
That is,
$$0<\a , \xi \ll \eps \ll 1/r.$$
Assume that $2r\le \ell \le 4r -1$. Consider any sufficiently large $4r$-uniform hypergraph $H$ on $n$ vertices such that $4r$ divides $n$ and
$$\delta _\ell (H) >\delta (n,4r,\ell) .$$

For any $k\ge 2$, it is clear that $\delta_{k-1}(\mathcal{B}_{n, k})\ge n/2 - (k-1)$. Thus, by Proposition~\ref{prop:deg}, $\delta _{\ell} (\mathcal B_{n,4r}) \geq (1/2 -\alpha)\binom{n-\ell}{4r-\ell}$. Consequently $\delta _{\ell} (H) \geq (1/2- \a) \binom{n-\ell}{4r-\ell}$. Theorem~\ref{nonexthm1} implies that
either $H$ is $\eps$-close to $\mathcal B_{n,k}$ or $\overline{\mathcal B}_{n,k}$ or $H$ contains
a matching $M$ of size $|M| \le \xi n/(4r)$ that absorbs any set $W\subseteq V(H) \setminus V(M)$ such that $|W| \in 4r\mathbb{N}$ with $|W| \le \xi^2 n$.
In the former case Theorem~\ref{extthm} implies that $H$ contains a perfect matching. In the latter case set $H':=H\backslash V(M)$ and $n':=|V(H')|$.
Since $\ell\ge 2r$, $\alpha, \xi \ll 1/r$ and $n$ is sufficiently large,
$$\delta _{\ell} (H') \geq \delta _{\ell} (H) - |V(M)|\binom{n}{4r-\ell-1} \geq
\left(\frac{4r-\ell}{4r} - \frac{1}{(4r)^{(4r-\ell)}}+\a \right)\binom{n'-\ell}{4r-\ell}.$$
Hence, if $\ell \leq 4r-2$, Theorem~\ref{marklemma} implies that $H'$ contains a matching $M'$ covering all but at most $\sqrt{n'}$ vertices in $H'$. If $\ell=4r-1$, then since $\delta _{\ell} (H') \geq n'/(4r)$, Fact 2.1 from~\cite{rrs} implies that $H'$  contains a matching $M'$ covering all but at most $(4r)^2$ vertices in $H'$.
In both cases set $W:=V(H')\backslash V(M')$. Then $|W|\leq \sqrt{n'} \le \xi^2 n$. By definition of $M$, there is a matching $M''$ in $H$ which covers
$V(M)\cup W$.  Thus, $M' \cup M''$ is a perfect matching of $H$, as desired.
\endproof

%%%%%%%%%%%%%%%%%%%%%%%%
\section{The Extremal Case}
\label{sec:ext}

In this section we prove Theorem~\ref{extthm}: for sufficiently small $\eps>0$ and sufficiently large $n \in k\mathbb{N}$, any $k$-uniform $n$-vertex hypergraph $H$ with $\delta_{\ell} (H) >  \d(n, k, \ell)$ and which is $\eps$-close to $\dB$ or $\doB$ contains a perfect matching.
%In fact, we prove a stronger statement, Theorem~\ref{thm:ext} below.
Recall that $\delta (n,k, \ell)$ is the maximum of the minimum $\ell$-degrees among all the hypergraphs in  $\mathcal H_{\text{ext}} (n,k)$, and $\mathcal H_{\text{ext}} (n,k)$ contains all hypergraphs $\doB (A,B)$ with $|A|$ odd, and all
hypergraphs $\dB (A,B)$ where $n/k$ is odd and $|A|$ is even, and where $n/k$ is even and $|A|$ is odd.

Given two $k$-uniform hypergraphs $H$ and $H'$ on $n$ vertices, we say $H$ \emph{$\eps$-contains $H'$} if, after adding at most $\eps n^k$ edges to $H$, the resulting hypergraph contains a copy of $H'$. More precisely, $H$ $\eps$-contains $H'$ if there is an isomorphic copy $\tilde{H}$ of $H$ such that $V(\tilde{H}) = V(H')$ and $|E(H')\setminus E(\tilde{H})| \le \eps n^k$. Trivially if $H$ is $\eps$-close to $H'$, then $H$ $\eps$-contains $H'$.

The following theorem thus implies Theorem~\ref{extthm}.
\begin{thm}\label{thm:ext}
Given $1\le \ell \le k-1$, there exist $\eps > 0$ and $n_0 \in \mathbb{N}$ such that the following holds. Suppose that $H$ is a $k$-uniform hypergraph on $n\ge n_0$ vertices such that $n$ is divisible by $k$. Then $H$ contains a perfect matching if the following holds.
\begin{itemize}
\item $\delta_{\ell} (H) >  \d(n, k, \ell)$;
\item $H$ $\eps$-contains $\dB$ or $\doB$.
\end{itemize}
\end{thm}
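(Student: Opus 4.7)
The plan is to prove Theorem~\ref{thm:ext} by extracting a partition $V(H)=A\cup B$ that realises the $\eps$-containment, handling a small number of ``atypical'' vertices via a short preliminary matching, exploiting the strict inequality $\delta_{\ell}(H)>\delta(n,k,\ell)$ to correct any parity mismatch, and finally completing to a perfect matching on the remaining highly-structured vertex set.

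First I would fix a partition $V(H)=A\cup B$ with $\{|A|,|B|\}=\{\lfloor n/2\rfloor,\lceil n/2\rceil\}$ that realises the $\eps$-containment of $\dB$ or $\doB$. Call an edge \emph{favored} if it has the parity predicted by this containment (odd w.r.t.\ $A$ in the $\dB$ case, even w.r.t.\ $A$ in the $\doB$ case), and \emph{bad} otherwise. Call a vertex \emph{atypical} if it is incident to many bad edges; by a double-counting argument at most $O(\sqrt{\eps}\,n)$ vertices are atypical. A mild refinement of $(A,B)$, obtained by swapping any vertex that looks more like it belongs to the other side, then ensures that every typical vertex behaves essentially as in the ideal model $\dB(A,B)$ or $\doB(A,B)$.

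Next I would analyse the parity constraint on a perfect matching consisting only of favored edges. Writing $m=n/k$, such a matching, with $a_r$ edges of type $A^rB^{k-r}$, must satisfy $\sum_r a_r=m$, $\sum_r r\,a_r=|A|$ and $\sum_r(k-r)\,a_r=|B|$. In the $\dB$ case (admissible $r$ odd) this forces $|A|\equiv m\pmod 2$; in the $\doB$ case (admissible $r$ even) it forces $|A|$ to be even. I would then build a short correction matching $M^*$ of $O(1)$ edges that covers every atypical vertex using favored edges and, if the parity constraint above is violated, also contains exactly one carefully chosen bad edge that flips the parity of the residual $A$-side. The strict inequality $\delta_{\ell}(H)>\delta(n,k,\ell)$ is the crucial input here: because $H$ has strictly higher minimum $\ell$-degree than \emph{every} hypergraph in $\mathcal H_{\text{ext}}(n,k)$, a counting comparison against the relevant $\dB(A,B)$ and $\doB(A,B)$ degrees forces the existence of a bad edge of the required parity-flipping type, in fact with abundance.

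Finally, on the residual hypergraph $H':=H\setminus V(M^*)$, with balanced typical parts $A',B'$ whose sizes now have the correct parity, I would build a perfect matching using only favored edges. This reduces to greedily covering $k$ vertices at a time by edges of admissible types $A^rB^{k-r}$, managing the multiplicities of each type so that $|A'|$ and $|B'|$ shrink in the proportion dictated by the Diophantine system above; Hall-type arguments applied to auxiliary bipartite graphs, underwritten by the minimum $\ell$-degree restricted to typical vertices, ensure such choices always exist. The main obstacle is the parity-correction step: since $\delta(n,k,\ell)$ is the maximum of the minimum $\ell$-degrees over several candidate extremal hypergraphs (parametrised by the parities of $|A|$ and of $n/k$), one has to split into sub-cases, in each isolating which bad edge type is needed and verifying that the strict degree surplus really forces an edge of that type to appear in $H$. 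The completion step is the second technical point, but once atypical vertices have been removed and the parity is correct, it reduces to manageable iterated Hall arguments.
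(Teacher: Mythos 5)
Your proposal follows essentially the same route as the paper's proof: identify atypical/bad vertices by double counting, move them across the partition, cover them by a short preliminary matching consisting of edges of the favored parity, use the strict inequality $\delta_\ell(H)>\delta(n,k,\ell)$ to extract one ``parity-breaking'' edge precisely in those sub-cases where $\mathcal B_{n,k}(A_1,B_1)$ or $\overline{\mathcal B}_{n,k}(A_1,B_1)$ lies in $\mathcal H_{\text{ext}}(n,k)$, and finally complete the matching on the now highly-structured remainder. The sub-case split by the parities of $|A|$ and $n/k$ that you foresee is exactly what the paper does, and your observation that the existence of the bad edge follows from $H\not\subseteq \mathcal B_{n,k}(A_1,B_1)$ (resp.\ $\overline{\mathcal B}_{n,k}(A_1,B_1)$) is the same mechanism as in the paper.

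Two points deserve flagging. First, your correction matching $M^*$ cannot have $O(1)$ edges: there may be $\Theta(\sqrt{\eps}\,n)$ atypical vertices, so $M^*$ has $\Theta(\sqrt{\eps}\,n)$ edges (the paper's $M_1$ has $\le\eps_1 n$ edges with $\eps_1=k^{1/2}\eps^{2/3}$); this is harmless but the $O(1)$ claim as stated is wrong. Second — and this is the more substantive gap — your completion step is too coarse. The necessary condition you derive ($|A|\equiv n/k\pmod 2$ in the $\dB$ case, $|A|$ even in the $\doB$ case) is a parity condition, but what the final matching lemma actually needs is stronger: the paper must further remove a small balancing matching so that the residual satisfies a \emph{divisibility} constraint (e.g.\ $|A_3|\equiv 0\pmod k$ when $k$ is even, or $|A_3|\equiv 0\pmod{k-1}$ when $k$ is odd, with further sub-cases depending on $k/2\bmod 2$), and only then applies Lemma~\ref{goodlem:r} / Corollary~\ref{cor:good}. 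That last lemma is itself not a Hall argument but a greedy-with-swaps argument (take a maximal matching of a single admissible type $A^rB^{k-r}$; if it is not perfect, swap a leftover $k$-set into $k-1$ existing matching edges to augment), made to work on the whole residual by first partitioning $A_3,B_3$ so that each piece supports a single type $r$. ``Greedily covering $k$ vertices at a time while managing multiplicities so the Diophantine system stays satisfiable, then Hall'' does not automatically handle the endgame when only a few vertices remain and the degree condition alone gives nothing; the swap trick, or an equivalent, is genuinely needed, and the divisibility-by-$k$ (not just parity) bookkeeping must appear explicitly. If you flesh out these two points, the outline becomes a correct proof along the paper's own lines.
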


%\subsection{Another structural extremal case result}
Furthermore, by modifying the proof of Theorem~\ref{thm:ext} slightly one can obtain another structural extremal case result (we omit its proof).
\begin{thm}\label{thm:ext2}
Given an integer $k \geq 2$, there exist $\eps > 0$ and $n_0 \in \mathbb{N}$ such that the following holds. Suppose that $H$ is a $k$-uniform hypergraph on $n\ge n_0$ vertices such that $n$ is divisible by $k$. Then $H$ contains a perfect matching if the following holds.
\begin{description}
\item[(i)] $\d_1(H)\ge (\frac12 - \eps) \binom{n-1}{k-1}$;
\item[(ii)] Under any partition $A, B$ of $V(H)$,
there always exist at least one $(A, B)$-even edge and at least one $(A, B)$-odd edge;
\item[(iii)] $H$ $\eps$-contains $\dB$ or $\doB$.
\end{description}
\end{thm}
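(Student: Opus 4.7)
The plan is to mirror the proof of Theorem~\ref{thm:ext}, but to replace the single hypothesis $\delta_\ell(H)>\delta(n,k,\ell)$ with the combination of (i) and (ii): condition (i) will supply the degree strength needed to complete and extend partial matchings, while condition (ii) will supply the single ``wrong-parity'' edge required whenever the near-bipartite structure produces a parity obstruction to a perfect matching. By (iii) and symmetry I may assume that $H$ $\eps$-contains $\mathcal B_{n,k}(A,B)$ for some partition $A,B$ of $V(H)$; the $\overline{\mathcal B}_{n,k}$ case is handled in exactly the same way. Throughout I fix this partition and work with the notions of $(A,B)$-even and $(A,B)$-odd edges.

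First I would classify vertices with respect to $(A,B)$. Call a vertex \emph{typical} if, up to an error of $\sqrt{\eps}\,n^{k-1}$, its neighbourhood in $H$ agrees with its neighbourhood in $\mathcal B_{n,k}(A,B)$. The $\eps$-containment of $\mathcal B_{n,k}$ combined with (i) implies, by a standard averaging argument as in Theorem~\ref{thm:ext}, that all but $O(\sqrt\eps\, n)$ vertices are typical, and that $||A|-n/2|=O(\sqrt\eps\, n)$. Using (i), I would then greedily extract a short matching $M_0$ of size $O(\sqrt\eps\, n)$ that covers every atypical vertex while also controlling the parities of $|A\setminus V(M_0)|$ and $|B\setminus V(M_0)|$; this is possible because any atypical vertex still has roughly $n^{k-1}/2$ neighbours, so edges of each parity incident to it are available. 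Write $H':=H\setminus V(M_0)$ and $n':=|V(H')|$, so that $\delta_1(H')\ge(\tfrac12-2\eps)\binom{n'-1}{k-1}$ and every vertex of $H'$ is typical.

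The decisive step is parity. A matching consisting entirely of $(A,B)$-odd edges that covers $V(H')$ exists only when $|A\cap V(H')|$ and $n'/k$ have the same parity, because each odd edge contributes an odd amount to $|A\cap V(H')|$. In the \emph{compatible} case I would complete the matching inside $H'$ using odd edges only, via a Hall-type/absorbing argument driven by (i), exactly as in the final step of the proof of Theorem~\ref{thm:ext}. In the \emph{incompatible} case I would invoke (ii): applied to the partition $A',B'$ of $V(H)$ obtained from $(A,B)$ by moving the (few) vertices of $V(M_0)$ to either side arbitrarily, (ii) produces an $(A',B')$-even edge $e$, and by the small size of $V(M_0)$ I can arrange that $e\subseteq V(H')$ and that $e$ has even intersection with $A\cap V(H')$. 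Removing $V(e)$ flips the parity of $|A\cap V(H')|$ into the compatible range, so I can complete the residual matching by odd edges as in the compatible case and then glue together $M_0$, $e$ and the final odd-edge matching.

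The main obstacle is the odd-edge completion step in the compatible case: I must show that, provided the parities agree, any $k$-uniform hypergraph close to $\mathcal B_{n,k}(A,B)$ and satisfying the minimum vertex-degree bound of (i) has a perfect matching consisting of $(A,B)$-odd edges. This is essentially the ``balanced bipartite matching'' extremal problem underlying Theorem~\ref{thm:ext} specialised to $\ell=1$, and it is where the bulk of the technical effort lives; the argument requires a careful absorbing-type matching within each parity class, again relying on (i) but taking into account the additional constraint that the edges used must intersect $A$ in an odd number of vertices. Once this is in place, the rest of the proof is bookkeeping around conditions (ii) and (iii).
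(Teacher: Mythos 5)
Your high-level plan matches the paper's stated intent for this theorem---rerun the proof of Theorem~\ref{thm:ext}, replacing the use of the full $\delta_\ell(H)>\delta(n,k,\ell)$ hypothesis in Step~2 by condition (ii) and using condition (i) for the degree-type steps---but as written the execution has two genuine gaps.

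The more serious one is the order of operations. You remove the matching $M_0$ covering all atypical vertices \emph{first}, and only afterwards invoke (ii) to find a parity-breaking edge $e$ which you want to lie inside $V(H')=V(H)\setminus V(M_0)$. Condition (ii) only guarantees one even and one odd edge somewhere in $V(H)$ for each partition of $V(H)$; it gives no control over where that edge sits, and re-routing the few vertices of $V(M_0)$ between $A'$ and $B'$ does not change which edges exist---it can perfectly well happen that every even edge of $H$ (in the $\dB$ case) meets $V(M_0)$. The paper's proof of Theorem~\ref{thm:ext} avoids this precisely by finding the special edge $e_0$ in Step~2, \emph{before} Step~3, then requiring the bad-vertex matching $M_1$ to avoid $e_0$, and cleaning up $e_0\cap V_0$ separately in Step~5. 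You need the same ordering: extract $e$ from (ii) first, then build $M_0$ disjoint from $e$.

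The second gap is the assertion that, because an atypical vertex ``still has roughly $n^{k-1}/2$ neighbours,'' edges of both parities through it are available. This does not follow: if $H$ $\eps$-contains $\mathcal B_{n,k}(A,B)$, an atypical vertex misses many odd edges, and combining that with (i) only shows it lies in many \emph{even} edges (this is the calculation of \eqref{eq:dv0}); it may lie in essentially no odd edge at all. So you cannot greedily build $M_0$ with edges of a chosen parity through atypical vertices while keeping the original partition. The paper's Step~1 fixes this by moving the bad vertices to the opposite side, so that their plentiful wrong-parity edges become right-parity edges with respect to the corrected partition $(A_1,B_1)$; without this relocation your greedy construction of $M_0$ and your claim to ``control the parities'' of $|A\setminus V(M_0)|$ are unsupported.
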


The rest of this section is devoted to the proof of Theorem~\ref{thm:ext}.

\subsection{Preliminaries and proof outline}

Given a set $A$, we denote by $K^k(A)$ the complete $k$-uniform hypergraph on $A$ (the superscript $k$ is often omitted).
Given integers $0\le r\le k$ and two disjoint sets $A$ and $B$, let $K^k_r(A, B)$ or simply $K_r(A, B)$ denote the $k$-uniform hypergraph on $A\cup B$ whose edges are all $k$-sets intersecting $A$ with precisely $r$ vertices.

Let $H, H'$ be two $k$-uniform hypergraphs on the same vertex set $V$. Let $H' \setminus H := (V, E(H')\setminus E(H))$. Suppose that $0\le \alpha\le 1$ and $|V|= n$. A vertex $v\in V$ is called \emph{$\alpha$-good in $H$} (otherwise \emph{$\alpha$-bad}) with respect to $H'$ if $d_{{H'}\setminus H}(v) \le \alpha n^{k-1}$. Sometimes we also say that $v$ is \emph{$\alpha$-good (in $H$) with respect to $E(H')$}.

We use the following result \cite[Fact 4.1]{rrs} and include a proof for completeness.

\begin{lemma}
\label{goodlem:r}
Let $k, r\in \mathbb N$ such that $k\ge 2$ and $ r\le k$. Let $0< \alpha < \frac{1}{ k (2k(k-1))^{k-1} }$. Suppose that $H$ is a $k$-uniform hypergraph on $V= A\cup B$ such that $|A|= tr$, $|B|= t(k-r)$ for some integer $t\ge 2(k-1)$, and every vertex of $H$ is $\alpha$-good with respect to $K^k_r(A, B)$. Then $H$ contains a perfect matching.
\end{lemma}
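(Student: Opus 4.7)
We seek a perfect matching $M \subseteq H \cap K^k_r(A, B)$, which will automatically cover $V$ since $|A|=tr$ and $|B|=t(k-r)$. Let $H^- := K^k_r(A, B)\setminus H$ denote the ``missing'' hypergraph; the $\alpha$-good hypothesis yields $d_{H^-}(v) \le \alpha n^{k-1}$ for every $v$, and summing over $v$ and dividing by $k$ gives $|E(H^-)| \le \alpha n^k / k$.

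I would argue by contradiction. Suppose $M$ is a maximum matching in $H \cap K^k_r(A, B)$ with $|M| = m < t$; set $s := t-m \ge 1$, $A' := A\setminus V(M)$, $B' := B\setminus V(M)$. The maximality of $M$ forces every edge of $K^k_r(A', B')$ to lie in $H^-$, so each $v \in A'$ satisfies
\[
\alpha n^{k-1} \;\ge\; d_{H^-}(v)\;\ge\;\binom{sr-1}{r-1}\binom{s(k-r)}{k-r} \;\ge\; s^{k-1},
\]
where the last inequality uses the standard bound $\binom{N}{K}\ge (N/K)^K$ on each factor. Substituting $n=tk$ and the hypothesis on $\alpha$ yields $s \le t/(2(k-1)\,k^{1/(k-1)})$. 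In the tight case $t = 2(k-1)$ this already forces $s < 1$, a contradiction.

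For larger $t$ the vertex-degree bound alone does not close, and I would supplement it with an augmentation step. Fix $v \in A'$ and release a carefully chosen set of $q = q(k, s)$ edges $e_1, \dots, e_q \in M$ back into the pool, forming $U := \{v\}\cup V(e_1)\cup\cdots\cup V(e_q)\cup (A'\setminus\{v\})\cup B'$, of size $|U| = (s+q)k$ with $(s+q)r$ vertices in $A$ and $(s+q)(k-r)$ in $B$; one chooses $q$ (at most $2(k-1)$, and requiring $m \ge q$, which is guaranteed by $t \ge 2(k-1)$) so that $s + q = 2(k-1)$. Any perfect matching of $H\cap K^k_r(A\cap U, B\cap U)$ together with $M\setminus\{e_1,\dots,e_q\}$ gives a matching of $H$ of size $t$, contradicting maximality of $M$. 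The existence of such a matching in $U$ follows from a counting argument: the total number of perfect matchings of $K^k_r(A\cap U, B\cap U)$ is of order $(2(k-1))^k$ after applying binomial estimates, whereas the number that contain some $H^-$-edge is bounded above by a double-count against the vertex $\alpha$-bound and the $(2(k-1))/M_{2(k-1)}$ probability of hitting any given edge.

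The main obstacle is calibrating the augmentation count quantitatively: the worst case arises at $|U|/k = 2(k-1)$, where the ratio between valid and invalid repartitions is tightest, and it is exactly there that the explicit constant $\alpha < 1/(k(2k(k-1))^{k-1})$ is consumed. The hypothesis $t \ge 2(k-1)$ is precisely what is required to ensure enough matching edges are available to release, making the two hypotheses of the lemma mutually tight.
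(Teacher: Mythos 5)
Your first step matches the paper's: from $\alpha$-goodness of a vertex $v\in A'$, you derive $(t-m)^{k-1}\le \alpha n^{k-1}$ and note that at $t=2(k-1)$ this already forces $s<1$. The divergence — and the gap — is in the augmentation step.

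\textbf{First gap (applicability).} You set $q=2(k-1)-s$ so that $|U|=(s+q)k=2(k-1)k$, which requires $s\le 2(k-1)$. But the vertex bound only yields roughly $s< t/(2(k-1))$, which exceeds $2(k-1)$ once $t>4(k-1)^2$. The lemma must hold for all $t\ge 2(k-1)$, so the construction of $U$ simply does not exist in a whole range of cases.

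\textbf{Second gap (the counting cannot close).} Even when $s\le 2(k-1)$, the argument that $U$ contains a perfect matching in $H\cap K^k_r$ cannot work: $U$ has constant size $2(k-1)k$, so it contains only $O_k(1)$ edges of $K^k_r$, while $\alpha$-goodness only bounds missing edges at a vertex by $\alpha n^{k-1}$, which is $\omega(1)$ as $n\to\infty$. Thus $\alpha$-goodness gives \emph{no} information about which of the $O_k(1)$ edges inside $U$ are present. Maximality forbids $H$-edges inside $K^k_r(A',B')$, and every remaining $K^k_r$-edge of $U$ that uses released vertices could be missing from $H$ without violating $\alpha$-goodness, in which case the only $H$-edges available in $U$ are $e_1,\dots,e_q$ and no perfect matching of $U$ exists. (Your estimate that $K^k_r(A\cap U,B\cap U)$ has of order $(2(k-1))^k$ perfect matchings is also off — the true count is a larger combinatorial quantity — but that is secondary to the structural problem.)

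\textbf{What the paper does instead.} Rather than rematch the entire leftover, it fixes one $k$-set $S\subseteq A_0\cup B_0$ with $|S\cap A|=r$, and seeks $k-1$ distinct edges $e_1,\dots,e_{k-1}\in M$ that are simultaneously ``feasible'' for all $k$ vertices of $S$: that is, every transversal $k$-set through each $v_i\in S$ and the $e_j$'s (with $r$ vertices in $A$) lies in $H$. Given such a tuple, $\bigcup e_j\cup S$ contains $k$ disjoint $A^rB^{k-r}$-edges of $H$, increasing the matching by one and contradicting maximality. The key point your approach misses: $(e_1,\dots,e_{k-1})$ is chosen from a pool of $\binom{m}{k-1}\ge\bigl(t/(2(k-1))\bigr)^{k-1}=\Theta(n^{k-1})$ tuples, and for each $v_i$ at most $\alpha n^{k-1}$ tuples are infeasible (one per missing $(k-1)$-set), so a union bound over $k$ vertices leaves a good tuple precisely when $\alpha<1/(k(2k(k-1))^{k-1})$. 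A pool of size $\Theta(n^{k-1})$ is what lets the $\alpha n^{k-1}$ bound be useful; a fixed-size $U$ cannot.
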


\proof Let $M$ be a largest matching of $H$ consisting of only $A^r B^{k-r}$ edges. Set $m := |M|$
and $n:=|V|=tk$. We claim that $m= t$, namely, $M$ is a perfect matching of $H$. Suppose $m< t$ instead.
Let $A_0 := A\setminus V(M)$ and $B_0 :=B\setminus V(M)$. Then $|A_0|= (t - m) r\ge r$ and $|B_0|= (t - m)(k - r) \ge k-r$. The maximality of $M$ implies that there are no $A_0^r B_0^{k-r}$ edges. Fix $v\in A_0$. Since $v$ is $\alpha$-good with respect to $K^k_r(A, B)$, it follows that $\binom{|A_0| -1}{r-1} \binom{|B_0|}{k-r} \le \alpha n^{k-1}$, which implies that
\[
\left(\frac{|A_0|}{r} \right)^{r-1} \left(\frac{|B_0|}{k-r}\right)^{k-r} \le \alpha n^{k-1}
\]
and thus, $(t - m)^{k-1} \le \alpha (t k)^{k-1}$. Since $\alpha < 1/ (2k)^{k-1}$, this implies that $t - m\le t/2$ or $m\ge t/2$.

Fix a $k$-set $S= \{v_1, v_2, \dots, v_k\}$ with $v_1, \dots, v_r\in A_0$ and $v_{r+1}, \dots, v_k\in B_0$.
Given a vertex $v \in V$, we call a collection $e_1, \dots , e_{k-1}$ of $k-1$ distinct edges \emph{feasible for $v$}
if every $k$-set $T$ with $v \in T$, $|T\cap e_i|=1$ for all $1\leq i \leq k-1$ and $|T \cap A|=r$ is an edge of $H$.
We claim that there are $k-1$ (distinct) edges $e_1, \dots, e_{k-1}$ of $M$ that are feasible for all the vertices of $S$.
This contradicts the maximality of $M$ since it is easy to see that $\bigcup_{i=1}^{k-1} e_i\cup S$ contains $k$ disjoint
$A^r B^{k-r}$ edges of $H$.

To find $k-1$ feasible edges for all the vertices of $S$, we
consider all $(k-1)$-tuples of $M$.
%namely, all $k-1$ distinct edges from $M$.
There are $\binom{|M|}{k-1}\ge \binom{t/2}{k-1}$ $(k-1)$-tuples of $M$. Since each $v_i$ is $\alpha$-good, at most $\a n^{k-1}$ $(k-1)$-sets that are neighbors of $v$ in $K^k_r(A,B)$ are not neighbors of $v_i$ in $H$.  Thus at most $\a n^{k-1}$ $(k-1)$-tuples of $M$ are not feasible for $v_i$. In
total, at most $k \a n^{k-1}$ $(k-1)$-tuples of $M$ are not feasible for at least one vertex of $S$. Since $t/2 \ge k-1$ and $\alpha < \frac{1}{k (2k(k-1))^{k-1}}$, we have $\binom{t/2}{k-1} \ge (\frac{t}{2(k-1)})^{k-1} > k \alpha n^{k-1}$. Hence there always exists a $(k-1)$-tuple of $M$ feasible for all the vertices of $S$.
\endproof

To derive Corollary~\ref{cor:good}, we also need a simple claim.
\begin{claim}
\label{clm:HU}
Let $H$ and $H'$ be two $k$-uniform hypergraphs on an $n$-vertex set $V$. Suppose that $\a >0$ and $v$ is $\a$-good in $H$ with respect to $H'$. Let $H''$ be a subgraph of $H'$ on $U\subset V$ such that $v\in U$ and $|U|\ge cn$ for some $c>0$. Then $v$ is $\a'$-good in $H[U]$ with respect to $H''$, where $\a':= \a/ c^{k-1}$.
\end{claim}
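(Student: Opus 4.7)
The plan is to unwind the two definitions and observe that, once restricted to $v$, the edge set of $H''\setminus H[U]$ is contained in that of $H'\setminus H$; after that, the claimed bound is just a rescaling by the factor $(|U|/n)^{k-1}\ge c^{k-1}$.

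First I would recall what needs to be shown: $v\in U$ is $\alpha'$-good in $H[U]$ with respect to $H''$ precisely when $d_{H''\setminus H[U]}(v)\le \alpha'|U|^{k-1}$. So the task is to bound the number of $(k-1)$-subsets $T\subseteq U\setminus\{v\}$ such that $\{v\}\cup T\in E(H'')\setminus E(H[U])$.

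Next I would make the key monotonicity observation. Fix any such $T$. Then $\{v\}\cup T\in E(H'')\subseteq E(H')$, since $H''$ is a subgraph of $H'$. Moreover $\{v\}\cup T\notin E(H[U])$; but $\{v\}\cup T\subseteq U$, so $\{v\}\cup T\in E(H[U])$ is equivalent to $\{v\}\cup T\in E(H)$. Therefore $\{v\}\cup T\in E(H')\setminus E(H)$, witnessing that $T$ contributes to $d_{H'\setminus H}(v)$. Since different $T$'s are counted by different neighbours of $v$ in both hypergraphs, this yields the inclusion of edge sets at $v$ and hence the inequality
\[
d_{H''\setminus H[U]}(v)\ \le\ d_{H'\setminus H}(v)\ \le\ \alpha\, n^{k-1},
\]
where the last step uses the hypothesis that $v$ is $\alpha$-good in $H$ with respect to $H'$.

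Finally I would convert this $n^{k-1}$ bound into the required $|U|^{k-1}$ bound. Since $|U|\ge cn$, we have $n^{k-1}\le |U|^{k-1}/c^{k-1}$, and therefore
\[
d_{H''\setminus H[U]}(v)\ \le\ \alpha\, n^{k-1}\ \le\ \frac{\alpha}{c^{k-1}}\,|U|^{k-1}\ =\ \alpha'\,|U|^{k-1},
\]
which is exactly the $\alpha'$-goodness of $v$ in $H[U]$ with respect to $H''$. There is no real obstacle here: the whole argument is a one-line containment of neighbourhoods combined with the trivial rescaling $n^{k-1}\le |U|^{k-1}/c^{k-1}$; the only thing to be a little careful about is that restricting $H$ to $U$ does not introduce new non-edges at $v$ beyond those already present in $H$, which is why passing from $H[U]$ to $H$ in the middle of the computation is legitimate.
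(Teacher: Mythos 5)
Your argument is correct and follows the paper's one-line proof exactly: the chain $d_{H''\setminus H[U]}(v)\le d_{H'\setminus H}(v)\le \alpha n^{k-1}\le \alpha'|U|^{k-1}$ is precisely what appears in the paper, and your careful justification of the first inequality (restricting to $U$ introduces no new non-edges at $v$) is the correct, if unstated, reason it holds.
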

\begin{proof}
This follows from
\[
d_{H''\setminus H[U]}(v) \le d_{H' \setminus H}(v) \le \a n^{k-1} = \a' (cn)^{k-1}\le \a' |U|^{k-1}. \qedhere
\]
\end{proof}

\begin{cor}\label{cor:good}
Given an even integer $k\ge 2$, there exist $\alpha > 0$ and $n_0\in \mathbb{N}$ such that the following holds for all $n\ge n_0$ with $n\in 2k \mathbb{N}$. Suppose that $H$ is an $n$-vertex $k$-uniform hypergraph with a partition $A, B$ of $V(H)$ such that $|A|= |B| = n/2$. If every vertex of $H$ is $\alpha$-good with respect to $\dB (A, B)$,
 then $H$ contains a perfect matching.

Furthermore, if $k/2$ is odd, then $n\in 2k \mathbb{N}$ can be weakened to $n\in k \mathbb{N}$.
\end{cor}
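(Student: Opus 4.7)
The plan is to reduce Corollary~\ref{cor:good} to Lemma~\ref{goodlem:r} by splitting $V(H)=A\cup B$ into two halves on each of which we seek a matching of $A^r B^{k-r}$ type with odd $r$; since $k$ is even, both $r=1$ and $r=k-1$ are odd, so the auxiliary hypergraphs $K^k_1$ and $K^k_{k-1}$ built on the two halves lie inside $\dB(A,B)$, and Claim~\ref{clm:HU} transfers the $\alpha$-goodness from $H$ down to each half.

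In the main case $n\in 2k\mathbb{N}$, write $m=n/k$ (which is even) and choose arbitrary partitions $A=A_1\cup A_2$, $B=B_1\cup B_2$ with $|A_1|=|B_2|=m/2$ and $|A_2|=|B_1|=(k-1)m/2$; then $|A_1\cup B_1|=|A_2\cup B_2|=n/2$. By Claim~\ref{clm:HU} applied with $c=1/2$, every vertex of $A_1\cup B_1$ is $(2^{k-1}\alpha)$-good in $H[A_1\cup B_1]$ with respect to $K^k_1(A_1,B_1)$, and analogously for $H[A_2\cup B_2]$ and $K^k_{k-1}(A_2,B_2)$. Applying Lemma~\ref{goodlem:r} to $H[A_1\cup B_1]$ with $r=1$, $t=m/2$ and to $H[A_2\cup B_2]$ with $r=k-1$, $t=m/2$ produces perfect matchings on the two halves whose union is a perfect matching of $H$, provided $\alpha$ is chosen so that $2^{k-1}\alpha$ is below the threshold of the lemma and $n$ is large enough that $m/2\ge 2(k-1)$.

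For the \emph{furthermore} clause, assume $k/2$ is odd and $m=n/k$ is odd (otherwise we are already in the main case). The idea is to peel off a single $A^{k/2}B^{k/2}$ edge and reduce to the even-$m$ case: such an edge lies in $\dB(A,B)$ since $k/2$ is odd, and the $\alpha$-goodness hypothesis implies that at most $\alpha n^k/k$ edges of $\dB(A,B)$ are missing from $H$, while $\dB(A,B)$ contains $\binom{n/2}{k/2}^2=\Theta(n^k)$ edges of type $A^{k/2}B^{k/2}$, so for $\alpha$ small we can choose $A_3\subseteq A$ and $B_3\subseteq B$ with $|A_3|=|B_3|=k/2$ and $A_3\cup B_3\in E(H)$. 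Setting $U=V(H)\setminus(A_3\cup B_3)$, $A'=A\setminus A_3$, $B'=B\setminus B_3$, we have $|A'|=|B'|=(n-k)/2$ and $n-k\in 2k\mathbb{N}$, so Claim~\ref{clm:HU} gives that each vertex of $U$ is $(2^{k-1}\alpha)$-good in $H[U]$ with respect to $\mathcal{B}_{n-k,k}(A',B')$, and the main case applied to $H[U]$ yields a perfect matching of $H[U]$ which together with $A_3\cup B_3$ covers $V(H)$.

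The only minor care needed is to track the goodness parameter through these reductions: the main case alone degrades $\alpha$ to $2^{k-1}\alpha$ before Lemma~\ref{goodlem:r} is invoked, and the \emph{furthermore} case degrades it to $4^{k-1}\alpha$. Hence $\alpha$ must be fixed at the outset to be small enough that $4^{k-1}\alpha$ is below the threshold $\tfrac{1}{k(2k(k-1))^{k-1}}$ of Lemma~\ref{goodlem:r}, and $n_0$ must be large enough that the parameters $m/2$ or $(m-1)/2$ exceed $2(k-1)$; both requirements are routine.
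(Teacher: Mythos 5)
Your main case ($n\in 2k\mathbb{N}$) is essentially the paper's argument: split each of $A$ and $B$ into a $1/k$-fraction and a $(k-1)/k$-fraction so that the two resulting halves are balanced for $r=1$ and $r=k-1$ respectively, transfer goodness via Claim~\ref{clm:HU}, and apply Lemma~\ref{goodlem:r} on each half. (The paper's text has an apparent typo, writing $K_1(A_2,B_2)$ where $K_{k-1}(A_2,B_2)$ is meant; your version is the correct reading.)

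For the \emph{furthermore} clause your route is correct but genuinely different from — and rather more elaborate than — the paper's. The paper observes that when $k/2$ is odd, $K_{k/2}(A,B)\subseteq\dB(A,B)$, and since $|A|=|B|=n/2=(k/2)\cdot m$ is a multiple of $k/2$, Lemma~\ref{goodlem:r} applies directly with $r=k/2$ and $t=m$, with no peeling, no case split on the parity of $m$, and no second application of Claim~\ref{clm:HU}. Your approach — extract one $A^{k/2}B^{k/2}$ edge to make $m$ even and then invoke the main case — works, but it costs an extra $2^{k-1}$ factor in the goodness parameter and obscures the real reason the hypothesis can be weakened, namely that $K_{k/2}(A,B)$ is an odd-intersection hypergraph precisely when $k/2$ is odd. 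Worth noting the one-line alternative if you ever need to generalize.
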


\begin{proof}
First assume that $n\in 2k \mathbb{N}$. Then $|A|= |B|$ is divisible by $k$.
We arbitrarily partition $A$ into two subsets $A_1$ of size $|A|/k$ and $A_2$ of size $|A| (k-1)/k$, and partition $B$ into two subsets $B_1$ of size $|B|(k-1)/k$ and $B_2$ of size $|B|/k$. Let $H_i = H[A_i\cup B_i]$ for $i=1, 2$.
Since all the vertices of $H$ are $\alpha$-good with respect to $\dB (A, B)$, by Claim~\ref{clm:HU}, all the vertices in $A_1\cup B_1$ are $\a'$-good in $H_1$ with respect to $K_1(A_1, B_1)$, where $\a' := 2^{k-1} \a$.
Similarly, every vertex in $A_2\cup B_2$ is $\a'$-good in $H_2$ with respect to $K_1(A_2, B_2)$.
 As $\a'\ll 1/k$, we can apply Lemma~\ref{goodlem:r} to $H_1$ and $H_2$ obtaining a perfect matching $M_1$ of $H_1$ and a perfect matching $M_2$ of $H_2$. Thus $M_1\cup M_2$ is a perfect matching of $H$.

Second assume that $k/2$ is odd and $n\in k \mathbb{N}$. Then $|A|= |B|$ is divisible by $k/2$. Since every vertex of $H$ is $\alpha$-good with respect to $K_{k/2} (A, B)$,
we can apply Lemma~\ref{goodlem:r} with $r=k/2$ obtaining a perfect matching of $H$.
\end{proof}

Now we give an outline of our proof of Theorem~\ref{thm:ext}.
\begin{description}
\item[Step 1] Since $H$ $\eps$-contains $\dB$ (or $\doB$), all but at most $\eps_1 n$ vertices in $H$ are $\eps_2$-good with respect to $\dB$ (or $\doB$) for some $\eps \ll \eps_1\ll \eps_2$. Denote the set of $\eps_2$-bad vertices by $V_0$.
Let $A$ and $B$ denote the vertex classes of $\dB$ (or $\doB$).
We move the vertices of $V_0$ to the other side (from $A$ to $B$ or from $B$ to $A$) and denote the resulting sets by $A_1$ and $B_1$.

\item[Step 2] In some cases, we will obtain a special edge $e_0$, which is
an $(A_1, B_1)$-even edge when $H$ $\eps$-contains $\dB$ or an $(A_1, B_1)$-odd edge when $H$ $\eps$-contains $\doB$.
Note that $e_0$ may contain vertices of $V_0$.

\item[Step 3] We remove a matching $M_1$ of size $|M_1|\le \eps_1 n$ containing all the vertices in $V_0\setminus e_0$. Denote the resulting sets by $A_2$ and $B_2$.

\item[Step 4] We remove a small matching from $H[A_2\cup B_2]$ such that the resulting sets $A_3, B_3$ satisfy:
\begin{itemize}
\item If $k$ is even and $H$ $\eps$-contains $\doB$, then $|A_3|\equiv 0 \pmod{k}$.
\item If $k$ is even and $H$ $\eps$-contains $\dB$, then $|A_3|=|B_3|$. Furthermore, if $k$ is divisible by $4$, we also need $|A_3|\equiv 0 \pmod{k}$.
\item If $k$ is odd, then $|A_3|\equiv 0 \pmod{k-1}$.
\end{itemize}
In many cases the special edge $e_0$ is needed in this step.

\item[Step 5] If $e_0$ was introduced in Step~2 but not used in Step~4 and $e_0\cap V_0\neq \emptyset$, we remove a small matching containing all the vertices in $e_0\cap V_0$ while preserving the property mentioned in Step~4.

\item[Step 6] We apply Lemma~\ref{goodlem:r} or Corollary~\ref{cor:good} to $H[A_3\cup B_3]$ and find a perfect matching of $H[A_3\cup B_3]$.
\end{description}
In the next three subsections, we give details of these steps based on the three cases listed in Step~4. Full details for each step are only given when the step is needed at the first time. Note that Steps 1 and 3 are essentially the same for all the three cases but Steps 2 and 5 are not necessary in some cases.
%The edge $e_0$ serves as our parity breaker in $H[A_2\cup B_2]$. Sometimes we can tell if $e_0$ is needed from the sizes of $A_1$ and $B_1$ and do not need Step~5. However, as in Section~\ref{sec:ext2}, it is harder to predict if $e_0$ is necessary when only knowing $A_1$ and $B_1$.

Indeed, we may only apply Step 2 in the case when, after applying Step 1,
(i) $H$ $\eps$-contains  $\dB$ and $\dB (A_1,B_1) \in \mathcal H_{\text{ext}}(n,k)$ or;
(ii) $H$ $\eps$-contains  $\doB$ and $\doB (A_1,B_1) \in \mathcal H_{\text{ext}}(n,k)$.
In these cases, we will need to use the condition that $\delta _{\ell} (H) >\delta (n,k,\ell)$ to ensure
$H$ contains our desired edge $e_0$. This is the only place in the proof of Theorem~\ref{thm:ext} (and in fact, the
only part of the proof of Theorem~\ref{mainthm})
where we use the full force of our minimum $\ell$-degree condition.

The edge $e_0$ acts as a ``parity-breaker'', helping us to construct our desired perfect matching.
However, if $H$ does not satisfy (i) or (ii) then no parity-breaking edge is required, and so we do not need Step 2.

\subsection{$k$ is even and $H$ $\eps$-contains $\doB$}
\label{sec:ext1}
In this subsection, we prove Theorem~\ref{thm:ext} under the assumption that $k$ is even and $H$ $\eps$-contains $\doB$,
where $0< \eps \ll 1/k$. Define $\eps_1 := {k}^{\frac{1}{2}} \eps^{\frac23}$ and $\eps_2 := {k}^{\frac12} \eps^{\frac13}$.  Let $H$ be a $k$-uniform hypergraph on an $n$-vertex set $V$ for sufficiently large $n\in k\mathbb{N}$. Note that $n$ is even because $k$ is even. Suppose that $H$ $\eps$-contains $\doB$, namely, there exists a partition $A,B$ of $V$ such that $|A| = |B| = n/2 $, $\doB = (V, E_{\text{even}}(A, B))$ and $|E_{\text{even}}(A, B)\setminus E(H)| \le \eps n^k$.

\medskip

\noindent \textbf{Step 1:} Recall that a vertex $v \in V(H)$ is \emph{$\eps_2$-bad} with respect to $\doB$ if $d_{{\doB}\setminus H} (v) > \eps_2 n^{k-1}$. In other words, if $v$ is $\eps_2$-good then all but at most $\eps_2 n^{k-1}$ of the $(A,B)$-even edges that contain $v$ belong to $H$. We observe that at most $\eps _1 n$ vertices in $H$ are $\eps _2$-bad. Otherwise
\[
k |E(\doB)\setminus E(H)| = \sum_{v\in V} |N_{\doB}(v)\setminus N_H(v)| > \eps_2 n^{k-1} \eps_1 n = k \eps n^k,
\]
contradicting the assumption that $|E_{\text{even}}(A, B)\setminus E(H)| \le \eps n^k$.

Let $A_{0}$ and $B_{0}$ denote the sets of $\eps _2$-bad vertices in $A$ and in $B$, respectively, and
set $V_0:= A_{0} \cup B_{0}$. Then $|A_{0}| + |B_{0}| = |V_0| \leq \eps _1 n$.
Notice that $\d_1(H)\ge (\frac12 - \eps) \binom{n-1}{k-1}$ by Proposition~\ref{prop:deg}. Consider $v\in V_{0}$. We know that $d_{\doB}(v) \le (\frac12 + \eps) \binom{n-1}{k-1}$. Since $ d_{\doB\setminus H} (v) > \eps_2 n^{k-1}$, it follows that
\begin{equation}
\label{eq:dv0}
d_{H\setminus \doB} (v)  \ge \left(\tfrac12 - \eps \right) \binom{n-1}{k-1} - (d_{\doB}(v) - \eps_2 n^{k-1}) \ge \eps_2 n^{k-1} - 2\eps \binom{n-1}{k-1}\ge \frac{\eps_2}{2} n^{k-1}.
\end{equation}
In other words, $v$ lies in at least $\eps_2 n^{k-1}/2$ $(A, B)$-odd edges in $H$.

Define $A_1:=(A\setminus A_{0}) \cup B_{0}$ and $B_1:=(B\setminus B_{0}) \cup A_{0}$. Then $A_1$, $B_1$ is a partition of $V(H)$ with $|A_1|, |B_1| \ge (1/2 - \eps _1 )n$.
%Without loss of generality we may assume that %$|A_1|\geq |B_1|$.

We now separate cases based on the parity of $|A_1|$.

First assume that $|A_1|$ is even. Then $\doB (A_1,B_1) \not \in \mathcal H_{\text{ext}} (n,k)$. Thus,
we do not need Step 2 (and therefore Step 5) in this case.

\noindent \textbf{Step 3:} We remove a matching $M_1$ from $H$ such that
\begin{itemize}
\item $|M_1|= |V_0| \leq \eps_1 n$;
\item each edge of $M_1$ contains exactly one vertex of $V_0$;
\item all the edges of $M_1$ are $(A _1, B_1)$-even.
\end{itemize}
To find $M_1$, we consider the vertices of $V_0$ in an arbitrary order and apply the following simple claim repeatedly.
\begin{claim}
\label{clm:Uv}
Let $k \geq 2$ be an integer and $\alpha _1 , \alpha _2 $ be constants such that
 $\a_2 > \a_1/(k-2)! \ge 0$ (here $0!:=1$). Let $H$ be a $k$-uniform hypergraph on $n$ vertices such that $d_H(v)\ge \a_2 n^{k-1}$ and $|U|\le \a_1 n$ for some $U\subset V(H)$ with $v\not\in U$. Then $v$ lies in an edge disjoint from $U$.
\end{claim}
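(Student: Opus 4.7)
The plan is a direct double-counting argument: bound the number of edges through $v$ that meet $U$ and show this bound is strictly smaller than $d_H(v)$, forcing some edge through $v$ to miss $U$ entirely.

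First I would observe that every edge of $H$ containing $v$ corresponds to a $(k-1)$-subset of $V(H)\setminus\{v\}$ (the neighborhood of $v$), and an edge containing $v$ meets $U$ exactly when this $(k-1)$-subset meets $U$. The number of $(k-1)$-subsets of $V(H)\setminus\{v\}$ that meet $U$ is at most $|U|\binom{n-2}{k-2}$ (pick a vertex of $U$ first, then any $k-2$ further vertices from $V(H)\setminus(U\cup\{v\})$, with overcounting only). Using the crude estimate $\binom{n-2}{k-2}\le n^{k-2}/(k-2)!$ together with $|U|\le\a_1 n$, the number of edges through $v$ meeting $U$ is at most
\[
|U|\binom{n-2}{k-2}\le \a_1 n\cdot \frac{n^{k-2}}{(k-2)!}=\frac{\a_1}{(k-2)!}n^{k-1}.
\]

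By hypothesis $\a_2>\a_1/(k-2)!$, so this quantity is strictly less than $\a_2 n^{k-1}\le d_H(v)$. Hence at least one edge of $H$ through $v$ is disjoint from $U$, as required.

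I do not expect any real obstacle; the only thing to watch is the degenerate case $k=2$, where $(k-2)!=1$ and the statement reduces to: if $d_H(v)\ge \a_2 n>\a_1 n\ge |U|$, then $v$ has a neighbor outside $U$, which is immediate from the pigeonhole principle. For $k\ge 3$ the displayed inequality chain above does all the work.
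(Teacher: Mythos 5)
Your proof is correct and follows essentially the same double-counting argument as the paper: bound the edges through $v$ meeting $U$ by $\a_1 n\binom{n-2}{k-2}\le \frac{\a_1}{(k-2)!}n^{k-1}$ and compare with $d_H(v)\ge\a_2 n^{k-1}$. The extra remark on $k=2$ is a harmless sanity check; the bound already covers that case.
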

\begin{proof} There are at most $\a_1 n \binom{n-2}{k-2} \le \frac{\a_1}{(k-2)!} n^{k-1}$ edges of $H$ containing $v$ and at least
one vertex from $U$. Since $\a_2 > \a_1/(k-2)!$, there exists an edge containing $v$ and no vertex of $U$.
\end{proof}
Suppose that we have found $i$ edges in $M_1$ and consider the next vertex $v\in V_0$. Then $|V_0 \cup V(M_1)|\le k \eps_1 n$. Because of \eqref{eq:dv0} and  $\eps_1\ll \eps_2 $, we can apply Claim~\ref{clm:Uv} with $U= (V_0\setminus \{v\}) \cup V(M_1)$ to find an $(A, B)$-odd edge containing $v$ but no other vertex of $V_0$ and which is disjoint from the existing edges of $M_1$. By the definition of $A_1, B_1$, any $(A,B)$-odd edge containing $v$ and no other vertex of $V_0$ is an $(A_1,B_1)$-even edge. We thus add this edge to $M_1$. At the end of this process, let $A_2:=A_1\setminus V(M_1)$ and $B_2:=B_1 \setminus V(M_1)$.

\medskip
\noindent \textbf{Step 4:} Since $|A_1| $ is even, the third property of $M_1$ implies that $s := |A_2| \pmod{k}$ is also even. If $s\neq 0$, we remove an $A_2^s B_2^{k-s}$ edge $e_2$. Such an edge exists because all the vertices in $A_2\cup B_2$ are $\eps_2$-good with respect to $\doB$. More precisely, since $A_2\subseteq A$, $B_2\subseteq B$, and $|A_2|, |B_2|\ge (\frac12 - (k+1)\eps_1)n$, Claim~\ref{clm:HU} implies that all the vertices in $A_2\cup B_2$ are $2\eps_2$-good with respect to $K_s(A_2, B_2)$. As $\eps_2 \ll 1/k$ and consequently
\[
2\eps_2 n^{k-1} < \binom{(\frac12 - (k+1)\eps_1)n - 1}{s-1} \binom{(\frac12 - (k+1)\eps_1)n}{k-s},
\]
there exists an $A_2^s B_2^{k-s}$ edge containing \emph{any} vertex in $A_2$.

Let $A_3:= A_2\setminus e_2$ and $B_3:= B_2\setminus e_2$. Then $|A_3| \equiv 0 \pmod k$. Since $|A_3| + |B_3| \equiv |A| + |B| \equiv 0 \pmod k$, we have $|B_3| \equiv 0 \pmod k$.

\medskip
\noindent \textbf{Step 6:}
Since $|A_3|\ge (1/2- 2k\eps_1)n \ge n/3$, by Claim~\ref{clm:HU}, all the vertices of $A_3$ are $(3^{k-1} \eps_2)$-good in $H[A_3]$ with respect to $\doB[A_3] = K^k(A_3)$, the complete $k$-uniform hypergraph on $A_3$. As $\eps_2\ll 1/k$, by Lemma~\ref{goodlem:r}  (with $r=k$),\COMMENT{When $r=k$, instead of Lemma~\ref{goodlem:r}, we could use the  (stronger) result of Daykin and H\"{a}ggkvist \cite{dayhag} mentioned in Section~1.} there is a perfect matching $M_3$ of $H[A_3]$. Similarly we can find a perfect matching $M'_3$ of $H[B_3]$ (note that $\doB[B_3] = K^k(B_3)$ because $k$ is even). The union $M_1\cup \{e_2\} \cup M_3\cup M'_3$ is the desired perfect matching of $H$.

\medskip

Now assume that $|A_1|$ is odd. In this case we need Step~2 (but not Step~5). Note that $\doB (A_1,B_1) \in \mathcal H_{\text{ext}} (n,k)$ since $|A_1|$ is odd. As $\delta _{\ell} (H) > \delta (n,k,\ell) \geq \delta _\ell (\doB (A_1,B_1))$, we can find an $(A_1, B_1)$-odd edge $e_0$. We apply Step~3 as before though now we require that $M_1$ is chosen to be disjoint from $e_0$. In particular, this means
$M_1$ is chosen to cover $V_0 \backslash e_0$. After Step~3, we let $A'_2:=A_2 \setminus e_0$ and $B'_2:=B_2 \setminus e_0$. Then $s:= |A'_2| \pmod k$ is even. The rest of the argument
 is the same as in the case when $|A_1|$ is even.
\endproof

\subsection{$k$ is even and $H$ $\eps$-contains $\dB$}
\label{sec:ext2}

Assume that $k$ is even, and $n$ is sufficiently large and divisible by $k$ (thus $n$ is also even). Recall that $\dB$ is the $k$-uniform hypergraph whose vertex set is partitioned into $A\cup B$ such that $|A|=|B|= n/2$ and whose edge set $E_{\text{odd}}(A, B)$ consists of all $k$-sets that intersect $A$ in an odd number of vertices. Suppose that $H$ is a $k$-uniform hypergraph on $n$ vertices such that $H$ $\eps$-contains $\dB$, namely, $|E_{\text{odd}}(A, B)\setminus E(H)| \le \eps n^k$.

\textbf{Step~1} is the same as in Section~\ref{sec:ext1}, except for replacing $\doB$ by $\dB$.
Therefore again $A_0$ and $B_0$ denote the sets of $\eps_2$-bad vertices in $A$ and $B$ respectively and $V_0:=A_0 \cup B_0$,
$A_1:=(A\backslash A_0)\cup B_0$ and $B_1:=(B\backslash B_0)\cup A_0$.

If $\dB (A_1,B_1)  \in \mathcal H_{\text{ext}} (n,k)$ then as $\delta _{\ell} (H) > \delta _{\ell} ( \dB (A_1,B_1) )$,
we can apply  \textbf{Step 2}. That is,  $H$
contains an $(A_1,B_1)$-even edge $e_0$.
Then $r_0 := |e_0\cap A _1|$ is even.
If $\dB (A_1,B_1)  \not \in \mathcal H_{\text{ext}} (n,k)$ then we do not apply Step 2. (So in what follows, we take
$e_0 = \emptyset$ in this case.)

In \textbf{Step~3}, we remove a matching $M_1$ such that
\begin{itemize}
\item $|M_1|= |V_0\backslash e_0| \leq \eps_1 n$;
\item each edge of $M_1$ contains exactly one vertex of $V_0\backslash e_0 $;
\item all the edges of $M_1$ are $(A _1, B_1)$-\emph{odd} and are disjoint from $e_0$.
\end{itemize}
Further, in the case when $\dB (A_1,B_1) \not \in \mathcal H_{\text{ext}} (n,k)$ we add at most $3$ extra $(A_1,B_1)$-odd edges to $M_1$
to ensure that $M_1$ is a matching with $|M_1|$ divisible by $4$.
Set $A_2 := A_1\setminus V(M_1)$ and $B_2 := B_1\setminus V(M_1)$. Without loss of generality, assume that $|A_2|\ge |B_2|$. Let $d := |A_2| - |B_2|$. Then $d$ is even because $|A_2| + |B_2|$ is even. We also know that $d\le k|M_1|+2|V_0|\le (k+2)\eps_1 n+3k$.
We now separate cases based on the parity of $k/2$.

\subsubsection{$k/2$ is even}

\noindent \textbf{Step 4:} We remove a matching $M_2$ that consists of $d/2$ $A_2^{k/2 + 1} B_2^{k/2 - 1}$ edges
that are disjoint from $M_1$ and $e_0$
(note that $k/2 + 1$ and $k/2 - 1$ are odd). In a similar way to Step~4 of Section~\ref{sec:ext1}, these edges exist because all the vertices in $(A_2\cup B_2)\setminus e_0$ are $\eps_2$-good with respect to $\dB$.
The resulting sets $A_3:= A_2 \setminus V(M_2)$ and $ B_3:= B_2 \setminus V(M_2)$ thus have the same size
\[
|A_2| - \frac{d}{2} \left(\frac{k}{2} + 1 \right) = |B_2| - \frac{d}{2} \left(\frac{k}{2} - 1 \right) .
\]
Let $s := |A_3| = |B_3| \pmod k$. Since $|A_3| + |B_3| \equiv 0 \pmod{k}$, it follows that either $s=0$ or $s= k/2$.

Notice that if $\dB (A_1,B_1) \not \in \mathcal H_{\text{ext}} (n,k)$, then $s=0$. Indeed, suppose not. Then $s=k/2$ and so
$|A_3|=|B_3|=km+k/2$ for some $m \in \mathbb N$. Thus, $|A_3|+|B_3|=2km+k$. Hence,
$(|A_3|+|B_3|)/k$ is odd but $|A_3|$ is even. Since the edges in $M_1 \cup M_2$ are $(A_1,B_1)$-odd, this implies
that either $(|A_1|+|B_1|)/k=n/k$ is odd and $|A_1|$ is even or $n/k$ is even and $|A_1|$ is odd. In both cases this
implies that $\dB (A_1,B_1)  \in \mathcal H_{\text{ext}} (n,k)$, a contradiction.
\medskip

\textbf{Case 1a: $s=0$.}
If $e_0\cap V_0 = \emptyset$, then we proceed to \textbf{Step~6} directly. Since $|A_3| = |B_3| \equiv 0 \pmod{k}$ and $|A_3|, |B_3|\ge (\frac12 - 2k^2\eps_1)n$, we can apply Corollary~\ref{cor:good} obtaining a perfect matching $M_3$ of $H[A_3\cup B_3]$. Consequently $M_1\cup M_2\cup M_3$ is the desired perfect matching of $H$. (Note that this covers the case when
$\dB (A_1,B_1) \not \in \mathcal H_{\text{ext}} (n,k)$, since $s=0$ and $e_0 = \emptyset$ in this case.)

If $e_0\cap V_0 \neq \emptyset$, then we need \textbf{Step~5}, in which we remove a small matching containing all the vertices of $e_0\cap V_0$. Let $v\in e_0\cap V_0$. By a similar calculation as in \eqref{eq:dv0}, $v$ is contained in at least $\eps_2 n^{k-1}/2$ $(A,B)$-even edges.
Applying Claim~\ref{clm:Uv} with $U=V(M_1\cup M_2)\cup(e_0 \backslash v)$, we find an $(A,B)$-even edge of $H[A_3\cup B_3]$ containing $v$. Since $v$ changes `side' (from $A$ to $B_1$ or from $B$ to $A_1$), and by the choice of $U$,
 this edge is an $A_3^r B_3^{k-r}$ edge for some \emph{odd} $r$. To keep the numbers of the remaining vertices in $A_3$ and $B_3$ the same and divisible by $k$,
when we remove an $A^r _3 B^{k-r} _3$ edge $e$ containing $v$
we immediately remove an $A_3^{k-r} B_3^r$ edge disjoint from $e$ (such an edge exists because all the vertices in $(A_3\cup B_3) \setminus e_0$ are $\eps_2$-good with respect to $\dB$). Repeat this process for all the vertices in $e_0\cap V_0$. Denote by $M_3$ the set of all removed edges in this step. Then $|M_3| \le 2k$. Let $A_4:= A_3 \setminus V(M_3)$ and $ B_4:= B_3 \setminus V(M_3)$. Then $|A_4|= |B_4| \equiv 0 \pmod k$. Finally in \textbf{Step~6} we find a perfect matching $M_4$ of $H[A_4\cup B_4]$ by Corollary~\ref{cor:good}. Thus $M_1\cup M_2\cup M_3\cup M_4 $ is the desired perfect matching of $H$.

\textbf{Case 1b: $s= k/2$.} Recall that $|e_0 \cap A_1| = r_0$ for some even $r_0$. Thus, $|e_0 \cap A_3| = r_0$.
 We continue on \textbf{Step~4} as follows. If $r_0\le k/2$, then we remove $e_0$ together with $\frac{k}2 - r_0$
 disjoint $A_3^{k/2+1} B_3^{k/2-1}$ edges; otherwise we remove $e_0$
together with $r_0 - \frac{k}2$ disjoint $A_3^{k/2-1} B_3^{k/2+1}$ edges. Denote by $M_3$ the set of these removed edges.
%Then $|M_3|= 1 + | k/2 - r_0|$ is odd because $k/2$ and $r_0$ are both even.
Let $A_4:= A_3 \setminus V(M_3)$ and $ B_4:= B_3 \setminus V(M_3)$. It is easy to see that $|A_4| = |B_4| = |A_3| - (|\frac{k}2 - r_0|
 +1) \frac{k}2$.
Since $s=k/2$ and $k/2, r_0$ are even, we have $|A_4|\equiv 0 \pmod k$.  Since $e_0$ has been used, we now skip Step~5 and proceed to \textbf{Step~6}. As in Case~1a, we find a perfect matching $M_4$ of $H[A_4\cup B_4]$ by Corollary~\ref{cor:good}. Consequently $M_1\cup M_2\cup M_3\cup M_4 $ is the desired perfect matching of $H$.

\subsubsection{$k/2$ is odd}
Recall that $d := |A_2| - |B_2|\ge 0$ is even. We will separate cases based on the parity of $d/2$.
Firstly though, notice that if $\dB (A_1,B_1) \not \in \mathcal H_{\text{ext}} (n,k)$ then $d$ is divisible by $4$.
Indeed, suppose instead that $d \equiv 2 \pmod 4$. First consider the case when $|A_2|+|B_2|$ is divisible by $4$.
Since $|M_1|$ is divisible by $4$, this implies that $|A_1|+|B_1|=n$ is divisible by $4$. But since $k$ is not divisible by $4$, this implies that $n/k$ is even.
Further, since $d \equiv 2 \pmod 4$, we derive that $|A_2|$ is odd. Since $|A_1\setminus A_2|$ is even, this implies that $|A_1|$ is odd. Therefore $\dB (A_1,B_1)  \in \mathcal H_{\text{ext}} (n,k)$, a contradiction. Second assume that $|A_2|+|B_2|\equiv 2 \pmod 4$ (recall that $|A_2|+|B_2|$ is even). Since $|M_1|$ is divisible by $4$, this implies that $n\equiv 2 \pmod 4$. As $k$ is even, this implies that $n/k$ is odd. So as $d \equiv 2 \pmod 4$, we derive that $|A_2|$ is even, and consequently $|A_1|$ is even. Therefore $\dB (A_1,B_1) \in \mathcal H_{\text{ext}} (n,k)$, a contradiction.

\textbf{Case 2a: $4$ divides $d$.} In \textbf{Step 4}, we remove $d/4$ disjoint
$A_2^{k/2 +2} B_2^{k/2 -2}$ edges (these edges exists because $k/2+2$ is odd and all the vertices $(A_2\cup B_2)\setminus e_0$ are $\eps_2$-good with respect to $\dB$). Denote by $M_2$ the set of these edges. Let $A_3:= A_2 \setminus V(M_2)$ and $ B_3:= B_2 \setminus V(M_2)$. Then
\[
|A_3| = |A_2| - \frac{d}{4} \left(\frac{k}{2} + 2 \right) = |B_2| -  \frac{d}{4} \left(\frac{k}{2} - 2 \right) = |B_3|.
\]
If $e_0\cap V_0 = \emptyset$, then we proceed to \textbf{Step~6}.
Claim~\ref{clm:HU} implies that all the vertices in $H[A_3\cup B_3]$ are $2\eps_2$-good with respect to $E_{\text{odd}}(A_3, B_3)$. Since $k/2$ is odd, we can apply the second assertion in Corollary~\ref{cor:good} and find a perfect matching $M_3$ in $H[A_3\cup B_3]$ (here we do not require $|A_3|=|B_3|\equiv 0 \pmod{k}$). Thus, $M_1 \cup M_2 \cup M_3$ is our desired perfect matching in $H$.
(Note that this covers the case when
$\dB (A_1,B_1) \not \in \mathcal H_{\text{ext}} (n,k)$, since $e_0 = \emptyset$ in this case.)

If $e_0\cap V_0 \neq \emptyset$, we need to apply \textbf{Step~5}. As in Case~1a, we remove a matching $M_3$ of size at most $2k$ containing all the vertices of $e_0\cap V_0$ such that $A_4:= A_3 \setminus V(M_3)$ and $ B_4:= B_3 \setminus V(M_3)$ have the same size. Finally in \textbf{Step~6} we find a perfect matching $M_4$ of $H[A_4\cup B_4]$ by the second assertion in Corollary~\ref{cor:good}.
Thus, $M_1 \cup M_2 \cup M_3 \cup M_4$ is a perfect matching in $H$.

\textbf{Case 2b: $d \equiv 2 \pmod{4}$.}  We remove $e_0$ immediately. Let $A'_2:= A_2\setminus e_0$ and $B'_2: = B_2 \setminus e_0$. Since $k\equiv 2 \pmod{4}$ and $r_0$ is even, we have $k- 2 r_0 \equiv 2 \pmod{4}$.  Consequently $|A'_2| - |B'_2|
= (|A_2|-r_0)-(|B_2|-k+r_0)= d + (k- 2r_0) \equiv 0 \pmod{4}$.
We then follow the procedure of Case~2a (since $e_0$ has been removed, we can skip Step~5).
\endproof

\subsection{$k$ is odd}
Let $H$ be a $k$-uniform hypergraph such that it $\eps$-contains $\dB$ or $\doB$.

Recall that $\doB$ is the $n$-vertex $k$-uniform hypergraph on $V= A\cup B$ such that $|A| = \lfloor n/2 \rfloor$, $|B| = \lceil n/2 \rceil$, with edge set $E_{\text{even}}(A, B)$. Since $k$ is odd, $\dB$ can be viewed as the $n$-vertex $k$-uniform hypergraph
on $V= A\cup B$ such that $|A| = \lceil n/2 \rceil$, $|B| = \lfloor n/2 \rfloor$, with edge set $ E_{\text{even}}(A, B)$.
We thus assume that $V(H)= A\cup B$ such that either $|A|= \lfloor n/2 \rfloor$ or $|A| = \lceil n/2 \rceil$ and $|E_{\text{even}}(A, B)\setminus E(H)|\le \eps n^k$.

Our \textbf{Step~1} is the same as in Section~\ref{sec:ext1}. After applying Step 1 we have a partition $A_1,B_1$ of $V(H)$.
If $\doB (A_1,B_1) \not \in \mathcal H_{\text{ext}} (n,k)$ then, by definition of $\mathcal H_{\text{ext}} (n,k)$, $|A_1|$ is even.
 Thus, in this case $|A_1| \bmod k-1$ is even.

If $|A_1| \bmod k-1$ is odd, then we need \textbf{Step~2}: find an $(A_1, B_1)$-odd edge $e_0$. Note that in this case $\doB (A_1,B_1) \in \mathcal H_{\text{ext}} (n,k)$, and thus our minimum $\ell$-degree condition ensures we can find such an edge $e_0$.

Our \textbf{Step~3} is again the same as in Section~\ref{sec:ext1}.
(Note though, if $|A_1| \bmod k-1$ is odd, then we introduced $e_0$. Thus in this case we select $M_1$ to cover $V_0 \backslash e_0$
so that $M_1$ is disjoint from $e_0$.)
%Next, remove a matching $M_1$ of size $|M_1|\le \eps_1 n$ containing all the vertices of $V_0\setminus e_0$.  As usual, define $A_2 := A_1\setminus V(M_1)$ and $B_2 := B_1\setminus V(M_1)$.
Since each edge in the matching $M_1$ is an $A_1^r B_1^{k-r}$ edge for some even $r\le k-1$, it follows that $|A_1| \bmod k-1$ and $|A_2| \bmod k-1$ have the same parity.

Assume that $|A_2|\equiv s \pmod{k-1}$. In \textbf{Step~4}, if $s$ is even, then we simply remove an arbitrary $A_2^s B_2^{k-s}$ edge $e_2$ and let $M_2=\{ e_2\}$. If $s$ is odd, then we remove $e_0$, which is an $A_2^{r_0} B_2^{k- r_0}$ edge for some odd $r_0$.
Set $A'_2 :=A_2 \backslash e_0$. Thus, $|A'_2| \equiv s-r_0 \bmod k-1$ and since $s$, $r_0$ are odd, $s':=|A'_2| \bmod k-1$ is even.
Select an arbitrary $A_2^{s'} B_2^{k-s'}$ edge $e_2$ that is disjoint from $e_0$ and set $M_2=\{e_0, e_2\}$.

Let $A_3 := A_2\setminus V(M_2)$ and $B_3 := B_2\setminus V(M_2)$. The choice of $M_2$ is such that
 $|A_3|\equiv 0 \pmod{k-1}$. We skip Step~5 and proceed to \textbf{Step~6}. Arbitrarily partition $B_3$ into $B^1_3$ and $B^2_3$ such that $|B^1_3| = |A_3|/(k-1)$ (this is possible because $|A_3|\approx |B_3|\approx n/2$).
 Note that $|A_3|+|B_3|\equiv 0 \bmod k$. Hence, as $|A_3|+|B^1 _3|=k|A_3|/(k-1) \equiv 0 \bmod k$, we have that $|B^2_3|\equiv 0 \bmod{k}$. Let $H_1 := H[A_3\cup B^1_3]$ and $H_2 := H[B^2_3]$.
Since $|A_3| + |B_3^1|\ge (1/2 - 2k\eps_1) n k/(k-1)\ge n/2$, by Claim~\ref{clm:HU}, all the vertices of $H_1$ are $(2^{k-1} \eps_2)$-good with respect to $K_{k-1}(A_3, B^1_3)$. Since $k\ge 3$ (because $k\ge 2$ is odd),  we have $|B_3^2| \approx \frac{n}{2} \frac{k-2}{k-1}\ge \frac{n}{2k}$. By Claim~\ref{clm:HU},
 all the vertices of $H_2$ are $((2k)^{k-1} \eps_2)$-good with respect to $K^k[B^2_3]$. We therefore apply Lemma~\ref{goodlem:r} to $H_1$ (with $r=k-1$) and to $H_2$ (with $r=k$) to obtain a perfect matching $M_3$ of $H_1$ and a perfect matching $M'_3$ of $H_2$. Thus $M_1\cup M_2\cup M_3\cup M'_3$ is a perfect matching of $H$. \qed

%%%%%%%%%%%%%%%%%%
\section{The non-extremal case}\label{sectionnon}

In this section we prove Theorem~\ref{nonexthm1}.
%which says that hypergraphs $H$ which satisfy the minimum $\ell$-degree condition in Theorem~\ref{mainthm} are either `close' to one of our extremal examples or contain a reasonably small absorbing set that can absorb any small set of vertices from $V(H)$.
Let $\alpha >0$ and $r, \ell \in \mathbb N$ such that $2r \leq \ell \leq 4r-1$. Given a $4r$-uniform hypergraph $H$ on $n$ vertices such that $\delta _{\ell} (H) \geq \left( \frac{1}{2}-\a \right) \binom{n-\ell}{4r-\ell}$, by Proposition~\ref{prop:deg}, we have $\delta _{2r} (H) \geq \left( \frac{1}{2}- \a \right) \binom{n-2r}{2r}$. Thus, in order to prove Theorem~\ref{nonexthm1} it suffices to prove the following result.
\begin{thm}\label{thm:extabs}
Given any $\eps >0$ and $r \in \mathbb N$, there exist $\a, \xi >0$ and $n_0 \in \mathbb N$ such that the following holds.
Suppose that $H$ is a $4r$-uniform hypergraph on $n \geq n_0$ vertices where $4r$ divides $n$. If
$$\delta _{2r} (H) \geq \left( \frac{1}{2}-\a \right) \binom{n - 2r}{2r}$$
then $H$ is $\eps$-close to $\mathcal B_{n,4r}$ or $\overline{\mathcal B}_{n,4r}$, or $H$ contains
a matching $M$ of size $|M| \le \xi n/(4r)$ that absorbs any set $W\subseteq V(H) \setminus V(M)$ such that $|W| \in 4r\mathbb{N}$ with $|W| \le \xi^2 n$.
\end{thm}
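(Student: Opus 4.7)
The plan is to follow the absorbing method of R\"odl, Ruci\'nski and Szemer\'edi, organized around the auxiliary graph $G(H)$ on vertex set $\binom{V(H)}{2r}$ whose edges are the disjoint pairs $\{U, W\}$ with $U \cup W \in E(H)$. The hypothesis $\delta_{2r}(H) \geq (\tfrac12-\alpha)\binom{n-2r}{2r}$ is precisely the statement that every vertex of $G(H)$ has degree at least $(\tfrac12-\alpha)\binom{n-2r}{2r}$. For a $4r$-set $T \subseteq V(H)$ with a fixed partition $T = T_1 \cup T_2$ into two $2r$-sets, I call a pair of disjoint $2r$-subsets $X, Y$ of $V(H)\setminus T$ an \emph{absorber} for $(T_1, T_2)$ if $X \cup Y$, $X \cup T_1$ and $Y \cup T_2$ all lie in $E(H)$; equivalently, $T_1 \sim X \sim Y \sim T_2$ is a path of length three in $G(H)$. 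The value of such a configuration is that the single-edge matching $\{X \cup Y\}$ can be swapped for the two-edge matching $\{X \cup T_1, Y \cup T_2\}$, thereby absorbing $T$.

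The heart of the argument is a dichotomy: either (i) every partitioned $4r$-set $(T_1, T_2)$ has at least $c n^{4r}$ absorbers for some $c=c(r)>0$, or (ii) $H$ is $\eps$-close to $\mathcal{B}_{n,4r}$ or $\overline{\mathcal{B}}_{n,4r}$. In case (i), a standard random sampling argument produces the required matching: include each candidate absorber independently with probability of order $\xi n^{1-4r}$, then apply the Chernoff bound (Proposition~\ref{chernoff}) to show that each partitioned $(T_1, T_2)$ retains order $\xi n$ absorbers in the sample, while the expected number of vertex-overlaps between selected absorbers is of smaller order. Deleting one absorber from each conflict leaves a matching $M$ of size at most $\xi n/(4r)$. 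Given any $W \subseteq V(H)\setminus V(M)$ with $|W|\in 4r\mathbb{N}$ and $|W| \le \xi^2 n$, partition $W$ into $4r$-sets and absorb them one at a time from the reservoir of absorbers lying inside $M$; since at most $\xi^2 n/(4r)$ absorptions are needed and each consumes only a bounded number of reservoir absorbers, there is ample slack to finish.

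The substantive part is case (ii). A deficit of absorbers for some $(T_1, T_2)$ says there are few three-edge paths from $T_1$ to $T_2$ in $G(H)$, and by a double-counting argument this translates to few copies of a specific configuration in an auxiliary $2r$-uniform hypergraph $\tilde H$ on $V(H)$ constructed from $H$ (roughly, the hypergraph whose edges are the $2r$-sets $U$ for which $d_H(U)$ sits on the ``correct side'' of $\tfrac12\binom{n-2r}{2r}$); the configuration in question turns out to be a copy of the expanded triangle $\mathcal{C}^{2r}_3$. The hypergraph removal lemma (Theorem~\ref{thm:RS}) then permits us to delete few edges from $\tilde H$ to obtain a $\mathcal{C}^{2r}_3$-free hypergraph still having at least $(\tfrac12-\beta)\binom{n}{2r}$ edges, whereupon Keevash--Sudakov (Theorem~\ref{thm:Si}) identifies $\tilde H$ as a small edit from $\mathcal{B}_{n,2r}$. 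This yields a balanced bipartition of $V(H)$ with the correct parity-type structure; transferring this bipartition back and combining with the minimum-degree hypothesis forces $H$ to be $\eps$-close to $\mathcal{B}_{n,4r}$ or $\overline{\mathcal{B}}_{n,4r}$, with the choice decided by a global parity that emerges from the argument. The main obstacle is precisely this structural step: selecting the right $\tilde H$, justifying the passage from ``few absorbers in $G(H)$'' to ``few copies of $\mathcal{C}^{2r}_3$ in $\tilde H$'', and faithfully lifting Keevash--Sudakov's conclusion about the $2r$-uniform $\tilde H$ back to the original $4r$-uniform hypergraph $H$.
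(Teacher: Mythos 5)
Your plan has the right skeleton — random sampling for the absorbing matching once ``many absorbers'' is established, the graph $G(H)$ as the organizing object, the hypergraph removal lemma plus Keevash--Sudakov for the extremal structure — but there are two genuine gaps, both in the dichotomy you assert.

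\textbf{First gap: you only use one type of absorber.} You define an absorber for $(T_1, T_2)$ as a length-$3$ path $T_1\sim X\sim Y\sim T_2$ in $G(H)$, and you claim the dichotomy: either every partitioned $4r$-set has $\Omega(n^{4r})$ such absorbers, or $H$ is $\eps$-close to extremal. This dichotomy is false. The paper's Lemma~\ref{lem:G} distinguishes \emph{two} conditions on $G(H)$ (cases (a) and (b) of Claim~\ref{clm:ab}). Case (a) — a strong co-neighborhood condition — yields exactly your length-$3$ paths. But there is a genuine intermediate regime where (a) fails yet (b) holds (there is a positive proportion of vertices of $G$ with degree well above $N/2$). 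In that regime some pairs $(T_1, T_2)$ may have essentially no length-$3$ paths between them, even though $G(H)$ is far from $K_{N/2,N/2}$, and the paper must instead build absorbers from a length-$5$ path $T_1\sim X'\sim W'\sim Z'\sim Y'\sim T_2$ routed through the high-degree vertices. Only when \emph{both} (a) and (b) fail does the graph-theoretic Lemma~\ref{lem:abG} give the bipartite structure. Without the second absorber type your case (i) does not exhaust its complement.

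\textbf{Second gap: the wrong auxiliary hypergraph, and a missing graph-theoretic step.} You propose $\tilde H$ to be the $2r$-uniform hypergraph of $2r$-sets $U$ with $d_H(U)$ on the ``correct side'' of $\frac12\binom{n-2r}{2r}$, and hope to find few $\mathcal C^{2r}_3$'s in it. This is not the right object, and no direct double-count of ``few absorbers'' gives a bound on $\mathcal C^{2r}_3$-copies in it. In the extremal situations $\mathcal B_{n,4r}$ and $\overline{\mathcal B}_{n,4r}$, essentially every $2r$-set has degree $\frac12\binom{n-2r}{2r}+O(n^{2r-1})$, so a degree-threshold $\tilde H$ is unstable and carries no useful structure. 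What the paper actually does is first prove a purely graph-theoretic statement (Lemma~\ref{lem:abG}): if neither (a) nor (b) holds for the $N$-vertex graph $G(H)$ with $\delta(G)\ge(\frac12-\gamma)N$, then $G$ or $\overline G$ is within $\beta N^2$ of $K_{N/2,N/2}$. This step uses no removal lemma and no Keevash--Sudakov; it is elementary counting. Only \emph{after} this do you obtain the bipartition $R,B$ of $\binom{V(H)}{2r}$, view $R,B$ as a $2$-coloring of the complete $2r$-uniform hypergraph on $V(H)$, observe that the near-bipartiteness of $G$ forces very few ``bad'' $4r$-sets (those supporting a copy of $C^{2r}_4$ with exactly one edge of the minority color), and then argue — this is Lemma~\ref{lem:str} — that one of the color classes has few $\mathcal C^{2r}_3$'s, whereupon removal + Keevash--Sudakov identify that color class with $\mathcal B_{n,2r}\pm\eta n^{2r}$. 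The transfer back to the $4r$-uniform $H$ (Lemma~\ref{lem:GH}) then reconstructs $\mathcal B_{n,4r}$ or $\overline{\mathcal B}_{n,4r}$ by decomposing a $4r$-set into two $2r$-sets of controlled parity with respect to the bipartition of $V(H)$. Your proposal omits the entire graph-theoretic stabilization step and therefore has no way to produce the bipartition $R,B$ that the removal-lemma argument needs as input.
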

Theorem~\ref{thm:extabs} immediately follows from Lemmas~\ref{lem:abs}--\ref{lem:GH}.
Following the ideas in \cite{rrs2,rrs}, we first show in Lemma~\ref{lem:abs} that in order to find the absorbing set described in Theorem~\ref{thm:extabs}, it suffices to prove that there are at least $\xi n^{8r}$ absorbing $8r$-sets for every fixed $4r$-set from $V(H)$.

\begin{lemma}[Absorbing Lemma]
\label{lem:abs}
Given $0<\xi \ll 1$ and an integer $k \geq 2$, there exists an $n_0 \in \mathbb N$ such that the following holds. Consider a $k$-uniform
hypergraph $H$ on $n \geq n_0$ vertices. Suppose that any $k$-set of vertices $Q \subseteq V(H)$ can be absorbed by at least
$\xi n^{2k}$ $2k$-sets of vertices from $V(H)$. Then $H$ contains a matching $M$ of size $|M|\leq \xi n/k$ that absorbs any set
$W \subseteq V(H) \backslash V(M)$ such that $|W|\in k \mathbb N$ and $|W|\leq \xi ^2 n$.
\end{lemma}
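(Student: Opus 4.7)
The plan is a standard probabilistic absorbing argument in the style of R\"odl, Ruci\'nski and Szemer\'edi~\cite{rrs2}. For each $k$-set $Q\subseteq V(H)$, write $\mathcal L(Q)$ for the family of $2k$-subsets $S\subseteq V(H)\setminus Q$ that absorb $Q$; the hypothesis gives $|\mathcal L(Q)|\ge \xi n^{2k}$ for every such $Q$ (note that any absorbing $S$ must be disjoint from $Q$, since $|S\cup Q|$ has to be divisible by $k$). The goal is to produce a family $\mathcal F'$ of pairwise vertex-disjoint $2k$-sets such that $|\mathcal F'|\le \xi n/(2k)$, every $S\in\mathcal F'$ spans two disjoint edges of $H$, and $|\mathcal F'\cap \mathcal L(Q)|\ge \xi^2 n/k+1$ for every $k$-set $Q$. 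Once $\mathcal F'$ is in hand, replacing each $S\in\mathcal F'$ by the two edges of a perfect matching of $H[S]$ produces a matching $M$ of $H$ with $|M|=2|\mathcal F'|\le \xi n/k$, and any admissible $W$ can be absorbed greedily.

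Concretely, set $p:=\xi n/(4k\binom{n}{2k})$ and let $\mathcal F$ be the random family obtained by including each element of $\binom{V(H)}{2k}$ independently with probability $p$. Then $\ex|\mathcal F|=\xi n/(4k)$, while $\ex|\mathcal F\cap \mathcal L(Q)|\ge p\xi n^{2k}=\Theta(\xi^2 n)$ for every~$Q$. Applying Proposition~\ref{chernoff} to $|\mathcal F|$ and to each $|\mathcal F\cap \mathcal L(Q)|$, and union-bounding over the at most $n^k$ choices of $Q$, one finds with probability bounded away from~$0$ that simultaneously $|\mathcal F|\le 2\ex|\mathcal F|\le \xi n/(2k)$ and $|\mathcal F\cap \mathcal L(Q)|\ge \tfrac12\ex|\mathcal F\cap \mathcal L(Q)|$ for every $Q$; the Chernoff failure probabilities are exponentially small in $\Theta(\xi^2 n)$, which easily beats the $n^k$ union-bound factor. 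Simultaneously, the expected number of intersecting pairs $\{S,S'\}\subseteq \mathcal F$ is of order $p^2\cdot 2k\binom{n}{2k}\binom{n-1}{2k-1}=\Theta(\xi^2 n)$, and Markov's inequality caps this quantity at $O(\xi^2 n)$ as well.

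Fix an outcome of $\mathcal F$ meeting all three of these bounds and form $\mathcal F'$ by deleting, from each pair of intersecting $2k$-sets, one member, and by discarding any $S$ whose induced hypergraph $H[S]$ contains no perfect matching (this last pruning does not touch $\mathcal F\cap \mathcal L(Q)$, since every member of $\mathcal L(Q)$ trivially spans two disjoint edges of $H$). Only $O(\xi^2 n)$ sets are lost in the first step, so
\[
|\mathcal F'\cap \mathcal L(Q)| \;\ge\; \tfrac12 \ex|\mathcal F\cap \mathcal L(Q)| - O(\xi^2 n) \;\ge\; \xi^2 n/k + 1
\]
for every $Q$, provided the constant hidden in the definition of $p$ is not too small (the choice above works). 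Given $W\subseteq V(H)\setminus V(M)$ with $|W|\in k\mathbb N$ and $|W|\le \xi^2 n$, partition $W$ into $t=|W|/k\le \xi^2 n/k$ $k$-subsets $Q_1,\ldots,Q_t$ and absorb them greedily: at step $i$, select any $S_i\in \mathcal F'\cap \mathcal L(Q_i)$ not chosen at an earlier step (possible since $|\mathcal F'\cap \mathcal L(Q_i)|>t$), and swap the perfect matching $M_{S_i}$ of $H[S_i]$ for one of $H[S_i\cup Q_i]$.

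The main obstacle is the joint calibration of $p$: it must be large enough that $p\xi n^{2k}$ comfortably exceeds the $\Theta(\xi^2 n)$ loss incurred from intersecting pairs, yet small enough that $p\binom{n}{2k}\le \xi n/(4k)$, so that the final matching has size at most $\xi n/k$. The asymmetry between the bound $\xi n/k$ on $|M|$ and the bound $\xi^2 n$ on $|W|$ --- one extra factor of $\xi$ --- is precisely what opens this window. Once $p$ is fixed, the three Chernoff/Markov estimates and the greedy absorption step all go through routinely.
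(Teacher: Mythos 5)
Your proof is correct and follows essentially the same approach as the paper's: choose each $2k$-set independently with probability $p=\Theta(\xi/n^{2k-1})$, apply Chernoff plus a union bound to control $|\mathcal F|$ and each $|\mathcal F\cap\mathcal L(Q)|$, use Markov to control intersecting pairs, prune, and absorb greedily. The only slip is a harmless parenthetical: an absorbing $2k$-set $S$ for $Q$ need not be disjoint from $Q$ --- divisibility of $|S\cup Q|$ only forces $|S\cap Q|\in\{0,k\}$, so $Q\subseteq S$ is possible; this does not affect the argument, since the $Q_i$ in the absorption step lie in $V(H)\setminus V(M)$ and are therefore automatically disjoint from every $S\in\mathcal F'$.
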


Given a $2r$-uniform hypergraph $H$ (for some $r\geq2$), we define the graph $G(H)$ with vertex set $\binom{V(H)}{r}$ in which two vertices $x_1\dots x_r, y_1\dots y_r \in V(G(H))$ are adjacent if and only if $x_1\dots x_r y_1 \dots y_r \in E(H)$.
When it is clear from the context, we will often refer to $G(H)$ as $G$.
\begin{lemma}[Lemma on $G$]
\label{lem:G}
Given any $\b>0$ and an integer $r\geq 2$, there exist $\a , \xi >0$, and $n_0 \in \mathbb N$ such that the following holds.
Suppose that $H$ is a $2r$-uniform hypergraph on $n \geq n_0$ vertices so that $2r$ divides $n$ and
$$\d_{r}(H) \ge \left(\frac12 - \a \right)\binom{n-r}{r}.$$
Set $G:= G(H)$ and $N:= \binom{n}{r}$ (then $N$ is even because $2r$ divides $n$). Then at least one of the following assertions holds.
\begin{itemize}
\item $G = K_{\frac{N}{2}, \frac{N}{2}} \pm \b N^2$ or $\overline{G} = K_{\frac{N}{2}, \frac{N}{2}} \pm \b N^2$; in other words, either $G$ or $\overline{G}$ becomes a copy of $K_{\frac{N}{2}, \frac{N}{2}}$ after adding or deleting at most $ \beta N^2$ edges.
\item There are at least $\xi n^{4r}$ absorbing $4r$-sets in $\binom{V(H)}{4r}$ for every $2r$-subset of $V(H)$.
\end{itemize}
\end{lemma}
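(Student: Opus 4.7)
\textbf{Proof plan for Lemma~\ref{lem:G}.}

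The plan is to prove the contrapositive: if neither $G$ nor $\overline G$ is $\beta N^2$-close to $K_{N/2,N/2}$, then every $2r$-set $Q \subseteq V(H)$ admits at least $\xi n^{4r}$ absorbing $4r$-sets. I first fix a concrete absorbing template. For a $2r$-set $Q$ partitioned as $Q = Q_1 \cup Q_2$ into two $r$-sets, call an ordered $4$-tuple $(A,B,C,D)$ of pairwise disjoint $r$-sets in $V(H)\setminus Q$ an \emph{absorbing configuration} if
$Q_1 \cup A,\ A \cup B,\ Q_2 \cup B,\ C \cup D \in E(H)$.
Then $S := A \cup B \cup C \cup D$ absorbs $Q$, as $\{A\cup B,\, C\cup D\}$ is a perfect matching of $H[S]$ and $\{Q_1\cup A,\, Q_2\cup B,\, C\cup D\}$ is a perfect matching of $H[S\cup Q]$. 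Viewed in $G$, such a configuration is a path of length $3$ from $Q_1$ to $Q_2$ with interior vertices $A, B$, together with a disjoint edge $CD$. The minimum $r$-degree assumption guarantees that the number of ``floating'' edges $CD$ of $G$ disjoint from any fixed set of $O(r)$ vertices is $\Omega(N^2)$, so the second conclusion reduces to: for every $Q$, some partition $Q = Q_1 \cup Q_2$ satisfies $e_G(N_G(Q_1), N_G(Q_2)) = \Omega(N^2)$, i.e.\ there are $\Omega(N^2)$ length-$3$ paths in $G$ from $Q_1$ to $Q_2$.

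Next I set up a dichotomy using the hypergraph removal lemma (Theorem~\ref{thm:RS}) and the Keevash--Sudakov stability theorem (Theorem~\ref{thm:Si}). Let $T(H)$ and $T(\overline H)$ denote the number of copies of the expanded triangle $\mathcal C^{2r}_3$ in $H$ and $\overline H$ respectively, and choose constants $\alpha \ll \xi \ll \gamma \ll \beta'' \ll \beta' \ll \beta$. If $T(H) \le \alpha'' n^{3r}$ for suitable $\alpha''$, then Theorem~\ref{thm:RS} yields a subhypergraph $H' \subseteq H$ with $e(H\setminus H') \le \gamma \binom{n}{2r}$ that is $\mathcal C^{2r}_3$-free; Proposition~\ref{prop:deg} together with the minimum-degree condition gives $e(H') \ge (1/2-\beta')\binom{n}{2r}$, and Theorem~\ref{thm:Si} then delivers $H' = \mathcal B_{n,2r} \pm \beta'' n^{2r}$, so $H = \mathcal B_{n,2r} \pm (\gamma + \beta'') n^{2r}$. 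A direct count shows $G(\mathcal B_{n,2r})$ differs from $K_{N/2,N/2}$ in only $O(N^2/n) = o(N^2)$ edges (the missing edges correspond to intersecting $r$-set pairs across the parity-bipartition), and each removed or added $2r$-edge of $H$ alters at most $\binom{2r}{r}/2$ edges of $G$. Choosing constants appropriately, this yields $G = K_{N/2,N/2} \pm \beta N^2$, the first alternative. Symmetrically, if $e(\overline H) \ge (1/2-\beta')\binom{n}{2r}$ and $T(\overline H) \le \alpha'' n^{3r}$, the same chain applied to $\overline H$ gives $H = \overline{\mathcal B}_{n,2r} \pm O(n^{2r})$ and hence $\overline G = K_{N/2,N/2} \pm \beta N^2$. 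The only remaining regime is $e(\overline H) < (1/2-\beta')\binom{n}{2r}$, equivalently $e(H) > (1/2+\beta')\binom{n}{2r}$; here the supersaturation form of Theorem~\ref{thm:Si} (since $1/2$ is the Tur\'an density of $\mathcal C^{2r}_3$) guarantees $T(H) = \Omega(n^{3r})$.

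This leaves the ``rich'' case in which $T(H), T(\overline H) = \Omega(n^{3r})$, i.e.\ $G$ simultaneously contains $\Omega(N^3)$ triangles (on pairwise disjoint $r$-sets) and $\Omega(N^3)$ pairwise-disjoint independent triples. The task is to deduce, for every $Q$, that at least one partition $Q = Q_1 \cup Q_2$ satisfies $e_G(N_G(Q_1), N_G(Q_2)) = \Omega(N^2)$. The intuition is that were some $Q$ to violate this for all $\binom{2r}{r}$ partitions, the neighborhoods of the $r$-subsets of $Q$ would collectively exhibit a near-bipartite or near-co-bipartite pattern in $G$, which by a stability/defect-Cauchy--Schwarz argument would globally force $G$ into a structure close to $K_{N/2,N/2}$ or $K_{N/2}\sqcup K_{N/2}$, contradicting the simultaneous surplus of triangles and anti-triangles. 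I expect the main obstacle to lie exactly here: converting the global $\mathcal C^{2r}_3$-surplus in both $H$ and $\overline H$ into a uniform pointwise lower bound for $e_G(N_G(Q_1), N_G(Q_2))$. The cleanest route is likely an averaging argument over those $\mathcal C^{2r}_3$ copies in $H$ whose two of the three $r$-sets lie in $N_G(Q_1)$ and $N_G(Q_2)$ respectively, combined with a second application of Theorem~\ref{thm:RS} to a localized hypergraph around $Q$ in order to rule out a residual extremal configuration.
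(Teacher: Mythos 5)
Your proposal diverges substantially from the paper's proof and contains a genuine gap. The paper proves Lemma~\ref{lem:G} entirely elementarily, without invoking the removal lemma or the Keevash--Sudakov theorem; those tools are reserved for the separate Lemma~\ref{lem:GH}. Instead, the paper's dichotomy (Claims~\ref{clm:ab} and~\ref{clm:abG}) is pointwise: either (a) every $r$-tuple $\underline{a}$ has at least $(\tfrac12+\gamma)\binom{n}{r}$ partners $\underline{b}$ with $|N_H(\underline{a})\cap N_H(\underline{b})|\ge\gamma\binom{n}{r}$, or (b) at least $2\gamma\binom{n}{r}$ $r$-tuples have $H$-degree at least $(\tfrac12+\gamma)\binom{n}{r}$ --- in which case one directly builds many absorbing $2r$-sets (a single edge) or $4r$-sets --- or neither holds, and then a short, purely graph-theoretic stability argument (Lemma~\ref{lem:abG}) shows $G$ or $\overline G$ is $\beta N^2$-close to $K_{N/2,N/2}$. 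The paper also works primarily with the simpler single-edge absorbing $2r$-set template, whereas you rely only on the length-$3$-path $4r$-set template.

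The gap in your plan is precisely where you suspect it. Global counts $T(H),T(\overline H)=\Omega(n^{3r})$ do not localize to the statement you actually need, namely that for \emph{every} $2r$-set $Q$ some partition $Q=Q_1\cup Q_2$ gives $\Omega(N^2)$ walks of length $3$ in $G$ from $Q_1$ to $Q_2$; your mention of averaging over triangles plus a ``second localized application of the removal lemma'' is a hope, not an argument, and this localization is exactly the content the paper extracts via conditions (a)/(b) by direct counting rather than triangle statistics. Moreover, your case split is not exhaustive: when $e(\overline H)<(\tfrac12-\beta')\binom{n}{2r}$ you deduce only that $T(H)=\Omega(n^{3r})$; nothing forces $T(\overline H)$ to be large, so this regime is not covered by the ``rich'' case as you state it (``both $T(H)$ and $T(\overline H)$ large''), and you give no argument for it. Until the rich case is handled with an actual proof, the proposal does not establish Lemma~\ref{lem:G}.
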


\begin{lemma}
\label{lem:GH}
Given any $\eps >0$ and $r \in \mathbb N$, there exist $\b>0$ and $n_0 \in \mathbb N$ such that the following holds.
Suppose that $H$ is a $4r$-uniform hypergraph on $n \geq n_0$ vertices where $4r$ divides $n$.
Suppose further that $G:= G(H)$ satisfies $G = K_{\frac{N}{2}, \frac{N}{2}} \pm \b N^2 $ or $\overline{G} = K_{\frac{N}{2}, \frac{N}{2}} \pm \b N^2 $, where $N:= \binom{n}{2r}$. Then $H$ is $\eps$-close to $\mathcal B_{n,4r}$ or $\overline{\mathcal B}_{n,4r}$.
\end{lemma}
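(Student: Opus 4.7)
The plan is to extract from the bipartition $X_1, X_2$ of $V(G) = \binom{V(H)}{2r}$ a genuine vertex bipartition $A \cup B = V(H)$ such that $X_1, X_2$ essentially coincide with the parity classes $X_{\mathrm{odd}}(A) := \{U \in \binom{V(H)}{2r} : |U \cap A| \text{ is odd}\}$ and $X_{\mathrm{even}}(A)$. Once this is established, the conclusion $H = \mathcal B_{n,4r}(A,B) \pm \eps n^{4r}$ will follow immediately: a disjoint pair $\{U, W\}$ with $U \in X_1$ and $W \in X_2$ corresponds to a $4r$-set $U \cup W$ satisfying $|(U \cup W) \cap A| = |U \cap A| + |W \cap A|$ odd, hence an edge of $\mathcal B_{n,4r}(A,B)$; conversely, most such cross-part disjoint pairs are edges of $G$, hence edges of $H$. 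The case in which $\overline G$ is close to $K_{N/2, N/2}$ is symmetric and yields closeness to $\overline{\mathcal B}_{n,4r}$ instead.

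To construct $A, B$, first call $U \in X_i$ \emph{typical} if it has at most $\b^{1/2} N$ ``missing'' cross-part neighbours (disjoint $W \in X_{3-i}$ with $\{U,W\} \notin E(G)$) and at most $\b^{1/2} N$ ``extra'' same-part neighbours (disjoint $W \in X_i$ with $\{U,W\} \in E(G)$). The hypothesis $|E(G) \triangle E(K_{X_1,X_2})| \le \b N^2$ combined with Markov's inequality ensures all but $O(\b^{1/2} N)$ of the $2r$-sets are typical. A further averaging over $(2r-1)$-subsets then produces $S_0 \subseteq V(H)$ of size $2r-1$ such that $S_0 \cup \{v\}$ is typical for all but $O(\b^{1/2} n)$ vertices $v \in V(H) \setminus S_0$. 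The plan is to set $A := \{v \in V(H) \setminus S_0 : S_0 \cup \{v\} \in X_1\}$, and extend arbitrarily across $S_0$ (whose contribution to any symmetric difference is at most $O(n^{4r-1})$, which is negligible for large $n$).

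The core task is to verify $|X_1 \triangle X_{\mathrm{odd}}(A)| \le \eps N/2$ (up to swapping the roles of $X_{\mathrm{odd}}$ and $X_{\mathrm{even}}$). For a typical $U \in \binom{V(H) \setminus S_0}{2r}$, the idea is to construct a \emph{swap path} $U = U_{2r-1}, U_{2r-2}, \dots, U_0 = S_0 \cup \{v\}$ in which consecutive sets $U_j, U_{j-1}$ differ by exchanging one vertex of $U_j \setminus S_0$ for a vertex of $S_0 \setminus U_j$. The bipartite-like structure of $G$ allows one to track which part of $\{X_1, X_2\}$ contains each $U_j$: the part flips from step $j$ to step $j-1$ precisely when the two swapped vertices lie on different sides of $A,B$, as certified by a common ``good'' neighbour $W$ in the opposite part that forms an edge of $G$ with both $U_{j-1}$ and $U_j$. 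Traversing the path then yields $c(U) \equiv |U \cap A| + b \pmod 2$ for a global constant $b$ determined by $c(S_0 \cup \{v\})$, $|S_0 \cap A|$ and $[v \in A]$; hence $U \in X_1$ iff $|U \cap A|$ has a fixed parity.

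The main obstacle is guaranteeing that such a consistent swap path exists for all but an $\eps$-fraction of the typical $U$. At each of the $2r - 1$ path steps there are $\Omega(n)$ candidates for the vertex being swapped in and $\Omega(n^{2r})$ candidates for the certifying $2r$-set $W$, whereas each $U_j$ has only $O(\b^{1/2} N)$ bad neighbours in $G$ and only $O(\b^{1/2} n)$ bad swap partners sitting in $S_0$. A union bound over the path steps and a standard averaging argument then show that a consistent path exists for all but $O(\b^{1/4} N)$ choices of $U$. Choosing $\b$ sufficiently small in terms of $\eps$ and $r$ delivers $|X_1 \triangle X_{\mathrm{odd}}(A)| \le \eps N/2$, which combined with the first paragraph completes the proof.
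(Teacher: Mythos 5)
Your route is genuinely different from the paper's. The paper never tries to build the vertex bipartition directly: it views $X_1,X_2$ as a red/blue colouring of the complete $2r$-uniform hypergraph on $V(H)$, shows the hypothesis forces very few ``bad'' expanded $4$-cycles (copies of $C_4^{2r}$ with exactly one edge in one colour), deduces from a count of disjoint red/blue expanded triangles that one colour class is almost $\mathcal C_3^{2r}$-free, and then invokes the Hypergraph Removal Lemma together with the Keevash--Sudakov stability theorem to conclude that this colour class is close to $\mathcal B_{n,2r}$, from which the partition $X,Y$ of $V(H)$ is read off. Your swap-path scheme would, if completed, replace all of that machinery by an elementary argument, which would be a real gain (and would likely help extend the result beyond $4r$-uniformity).

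As written, however, there is a gap at the heart of the argument. The sentence ``the part flips from step $j$ to step $j-1$ precisely when the two swapped vertices lie on different sides of $A,B$'' is essentially the assertion that $X_1/X_2$-membership is governed by a single global vertex bipartition -- i.e.\ the parity structure you are trying to prove. The certificate $W$ you offer only witnesses \emph{whether} $U_j$ and $U_{j-1}$ lie in the same part (both being opposite to $W$); it says nothing about how the swapped pair relates to $A$ and $B$. Moreover $A$ is defined only through extensions of the single reference set $S_0$, while your swaps occur in contexts $T=U_j\cap U_{j-1}$ far from $S_0$ and swap in vertices of $S_0$ itself, whose side you have declared ``arbitrary''. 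What is missing is a proof that the flip/no-flip outcome of the swap $u\leftrightarrow u'$ is independent of the context $T$. This does follow from the one structural fact you never invoke: the two splittings $\{T\cup\{u\},\,T'\cup\{u'\}\}$ and $\{T\cup\{u'\},\,T'\cup\{u\}\}$ of the $4r$-set $T\cup T'\cup\{u,u'\}$ have the \emph{same} $G$-edge status, since both equal the indicator of $T\cup T'\cup\{u,u'\}\in E(H)$. Combined with typicality and averaging over $T'$, this yields $c(T\cup\{u\})+c(T\cup\{u'\})\equiv c(T'\cup\{u\})+c(T'\cup\{u'\})\pmod 2$ for most $T'$, so the flip indicator is a well-defined function $f(u,u')=g(u)+g(u')$, and $g$ produces the bipartition consistent with your $S_0$-based definition. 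This consistency between different splittings of one $4r$-set is exactly what the paper's bad-$C_4^{2r}$ analysis exploits; without it your path computation proves nothing. (Two smaller, routine points: you must also note that $|X_1|=N/2$ forces $|A|$ close to $n/2$, so that $\mathcal B_{n,4r}(A,B)$ is close to the balanced $\mathcal B_{n,4r}$, and handle the $O(n^{4r-1})$ sets meeting $S_0$ separately.)
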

Notice we have stated Lemmas~\ref{lem:abs} and~\ref{lem:G} in a more general setting than we require.
(That is, we consider $k$-uniform hypergraphs in Lemma~\ref{lem:abs} for all $k \geq 2$ and
$2r$-uniform hypergraphs  in Lemma~\ref{lem:G} for $r \geq 2$.)
 However, for Lemma~\ref{lem:GH}, our proof is such that we can only consider
$4r$-uniform hypergraphs for $r \in \mathbb N$. (This is the main obstacle in extending our proof to work for all
$2r$-uniform hypergraphs.)
The rest of the section is devoted to the proof of Lemmas~\ref{lem:abs}--\ref{lem:GH}.

%%%%%%%%%%%%%%%%%%%%%%%%%
\subsection{Proof of Lemma~\ref{lem:abs}}
For a $k$-set $Q \subseteq V(H)$, let $L_Q$ denote the family of all absorbing $2k$-sets for $Q$. By assumption, $|L_Q| \ge \xi n^{2k}$.
Let $F$ be the family of $2k$-sets obtained by selecting each of the $\binom{n}{2k}$ elements of $\binom{V(H)}{2k}$
independently with probability $p:= \xi / n^{2k-1}$.
Then
$$\mathbb{E}(|F|) = p \binom{n}{2k}  < \frac{\xi}{(2k)!} n \ \text{ and } \
\mathbb{E}(|L_Q\cap F|)\ge p \,\xi n^{2k}= \xi^2 n$$ for every set $Q\subseteq \binom{V(H)}{k}$.

Since $n$ is sufficiently large, Proposition~\ref{chernoff} implies that with high probability we have
\begin{equation}\label{eq:F}
    |F|\le 2 \mathbb{E}(|F|)< \frac{2\xi}{(2k)!} n,
\end{equation}
\begin{equation}\label{eq:LF}
   |L_Q\cap F| \ge \frac12 \, \mathbb{E}(|L_Q\cap F|) \ge \frac{\xi^2}{2} n \quad \text{for all  } Q\in \binom{V(H)}{k}.
\end{equation}
Let $Y$ be the number of intersecting pairs of members of $F$. Then
\[
\mathbb{E}(Y)\le p^2 \binom{n}{2k} 2k \binom{n}{2k-1}\le \frac{\xi^2 n}{(2k-1)!(2k-1)!}.
\]
By Markov's bound, the probability that $Y\le \frac{2 \xi^2}{(2k-1)!(2k-1)!} n$ is at least $\frac{1}{2}$. Therefore we can find a family $F$ of $2k$-sets satisfying \eqref{eq:F} and \eqref{eq:LF} and having at most $\frac{2 \xi^2}{(2k-1)!(2k-1)!} n$
intersecting pairs. Removing all non-absorbing $2k$-sets and one set from each of the intersecting pairs in $F$, we obtain a family $F'$ of disjoint absorbing $2k$-sets such that $|F'|\le |F|\le \frac{2\xi}{(2k)!} n \leq \xi n/2k$ and for all $Q\in \binom{V(H)}{k}$,
\begin{equation}\label{eq:LF'}
    |L_Q\cap F'| \ge \frac{\xi^2}{2} n - \frac{2 \xi^2}{(2k-1)!(2k-1)!} n > \frac{\xi^2}{k} n.
\end{equation}
Since $F'$ consists of disjoint absorbing sets and each absorbing set is covered by a matching, $V(F')$ is covered by a matching $M$.
Now let $W\subseteq V(H)\backslash V(F')$ be a set of at most $\xi^2 n$ vertices such  that $|W|= k\ell$ for some $\ell \in \mathbb N$. We arbitrarily partition $W$ into $k$-sets $Q_1, \dots, Q_{\ell}$. Because of \eqref{eq:LF'}, we are able to absorb each $Q_i$ with a different $2k$-set from $L_{Q_i}\cap F'$. Therefore $V(F')\cup W$ is covered by a matching, as desired.

%%%%%%%%%%%%%%%%%%%%%
\subsection{Proof of Lemma~\ref{lem:G}}
Given $\b> 0$, we choose additional constants $\g, \a, \xi$ such that
\begin{equation}
\label{eq:agxi}
0< \xi \ll \a \ll \g \ll \beta.
\end{equation}
Without loss of generality we may assume that $\b \ll 1/r$.
We also assume that $n$ is sufficiently large.

Let $Q\subseteq V(H)$ be a $2r$-set.
It is easy to see that if $Q$ has at least $\g^3 n^{2r}$ absorbing $2r$-sets then $Q$ has at least $\xi n^{4r}$ absorbing $4r$-sets. Indeed, let $P$ be an absorbing $2r$-set for $Q$. Then $P\cup e$ is an absorbing $4r$-set for $Q$ for any edge $e\in E(H - (P\cup Q))$. Since $n$
is sufficiently large,
$$|E(H)| \ge \left(\frac12 - \a \right)\binom{n-r}{r}\times \frac{\binom{n}{r}}{\binom{2r}{r}} = \left( \frac{1}{2}- \a \right) \binom{n}{2r}.$$
Hence, as $n$ is sufficiently large, there are at least
\[
\left(\frac12 - \a \right)\binom{n}{2r} - 4r\binom{n}{2r-1} \ge \frac{n^{2r}}{4(2r)!}
\]
edges in $H- (P\cup Q)$. Since an absorbing $4r$-set may be counted at most $\binom{4r}{2r}$ times when counting the number of $P, e$, there are at least
\[
\g^3 n^{2r} \times  \frac{n^{2r}}{4(2r)!} \times\frac{1}{\binom{4r}{2r}} \stackrel{(\ref{eq:agxi})}{\ge}  \xi n^{4r}
\]
absorbing $4r$-sets for $Q$.

Therefore, in order to prove Lemma~\ref{lem:G}, it suffices to prove the following two claims.

\begin{claim}\label{clm:ab}
If either of the following cases holds, then we can find $\g^3 n^{2r}$ absorbing $2r$-sets or $\g^3 n^{4r}$ absorbing $4r$-sets for every
$2r$-set $Q\in \binom{V(H)}{2r}$.
\begin{description}
\item[Case (a) ] For any $r$-tuple $\underline{a} \in \binom{V(H)}{r}$, there are at least $(\frac{1}{2} + \g) \binom{n}{r}$
$r$-tuples $\underline{b} \in \binom{V(H)}{r}$ such that $| N_H( \underline{a} ) \cap N _H (\underline{b}) | \ge \g \binom{n}{r}$.
\item[Case (b)] $| \{ \underline{a} \in \binom{V(H)}{r}: d_H(\underline{a})\ge (\frac12 + \g)\binom{n}{r} \} |\ge 2\g \binom{n}r$.
\end{description}
\end{claim}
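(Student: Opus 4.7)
The plan is to fix the $2r$-set $Q$ and any partition $Q = Q_1 \cup Q_2$ into two $r$-sets, and to write $N := \binom{n}{r}$. The hypothesis $\delta_{2r}(H) \ge (1/2 - \alpha) \binom{n-2r}{2r}$ gives $d_G(\underline{v}) \ge (1/2 - \alpha) \binom{n-r}{r} = (1/2 - \alpha - o(1)) N$ for every $\underline{v} \in V(G)$. The key observation is that a $2r$-set $S$ disjoint from $Q$ is a $2r$-absorbing set for $Q$ iff some splitting $S = S_1 \cup S_2$ makes the three pairs $\{Q_1, S_1\}$, $\{Q_2, S_2\}$, $\{S_1, S_2\}$ edges of $G$. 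Similarly, a $4r$-set $T$ disjoint from $Q$ is $4r$-absorbing provided some splitting $T = S_1 \cup S_2 \cup S_3 \cup S_4$ realises both the $T$-matching $\{S_1 S_2,\, S_3 S_4\}$ and the $(T \cup Q)$-matching $\{Q_1 \cup S_1,\, Q_2 \cup S_3,\, S_2 \cup S_4\}$, which requires the five edges $S_1 S_2$, $S_3 S_4$, $Q_1 S_1$, $Q_2 S_3$, $S_2 S_4$ in $G$. Both goals therefore reduce to counting prescribed edge patterns in $G$.

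\textbf{Case (a).} The plan is to apply the hypothesis with $\underline{a} = Q_2$ to produce the set $\mathcal{B} := \{\underline{b} : |N_G(\underline{b}) \cap N_G(Q_2)| \ge \gamma N\}$ of size $\ge (1/2 + \gamma) N$. Since $|N_G(Q_1)| \ge (1/2 - \alpha - o(1)) N$ and $\alpha \ll \gamma$, inclusion--exclusion yields $|\mathcal{B} \cap N_G(Q_1)| \ge \gamma N / 2$. For each $S_1$ in this intersection, we find at least $\gamma N / 2$ choices of $S_2 \in N_G(S_1) \cap N_G(Q_2)$ avoiding $Q \cup S_1$ (the disjointness loss is $O(n^{r-1}) = o(\gamma N)$). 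The resulting $\Omega(\gamma^2 N^2)$ ordered pairs give at least $\gamma^3 n^{2r}$ unordered absorbing $2r$-sets, after dividing by the bounded multiplicity and using $\gamma \ll 1/r$ to absorb the $r$-dependent constant.

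\textbf{Case (b).} The plan is to set $\mathcal{A} := \{\underline{a} : d_G(\underline{a}) \ge (1/2 + \gamma) N\}$, so that $|\mathcal{A}| \ge 2\gamma N$, and to use the pigeonhole fact that for every $\underline{a} \in \mathcal{A}$ and every $r$-set $\underline{v}$,
\[
|N_G(\underline{a}) \cap N_G(\underline{v})| \ge d_G(\underline{a}) + d_G(\underline{v}) - N \ge (\gamma - \alpha) N \ge \gamma N / 2.
\]
To count the five-edge $4r$-absorbing configurations, we pick $S_1 \in N_G(Q_1)$ and $S_3 \in N_G(Q_2)$ arbitrarily, then $S_2 \in N_G(S_1) \cap \mathcal{A}$, and finally $S_4 \in N_G(S_2) \cap N_G(S_3)$, which has at least $\gamma N/2$ choices by the pigeonhole above since $S_2 \in \mathcal{A}$. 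The double-counting identity
\[
\sum_{S_1 \in N_G(Q_1)} |N_G(S_1) \cap \mathcal{A}| \;=\; \sum_{\underline{a} \in \mathcal{A}} |N_G(\underline{a}) \cap N_G(Q_1)| \;\ge\; |\mathcal{A}| \cdot \gamma N / 2 \;\ge\; \gamma^2 N^2,
\]
combined with $|N_G(Q_2)| \ge (1/2 - \alpha - o(1)) N$, then gives, after summing over $(S_1, S_3)$, a total of $\Omega(\gamma^3 N^4)$ ordered $4$-tuples. Dividing by the $4!$-multiplicity per unordered $4r$-set yields at least $\gamma^3 n^{4r}$ absorbing $4r$-sets, once the factor $(r!)^4$ relating $N^4$ to $n^{4r}$ is absorbed into $\gamma \ll 1/r$.

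\textbf{Main obstacle.} The delicate point is Case (b): the high-degree set $\mathcal{A}$ has density only $2\gamma$ in $V(G)$, so a priori $\mathcal{A}$ may not intersect $N_G(Q_1)$ or $N_G(Q_2)$ at all, and one cannot simply mimic the inclusion--exclusion of Case (a). The double-counting identity bypasses this by transferring the ``high-degree'' structure of $\mathcal{A}$ into the local neighborhoods of typical $S_1 \in N_G(Q_1)$, after which a single $\mathcal{A}$-vertex $S_2$ placed between $S_1$ and $S_3$ supplies the missing common neighbors via the pigeonhole fact. The bookkeeping of disjointness corrections, the ordering multiplicities when passing from ordered $4$-tuples to unordered $4r$-sets, and the $r$-dependent constants is straightforward but tedious, and will be absorbed into the strict hierarchy $\xi \ll \alpha \ll \gamma \ll \beta \ll 1/r$.
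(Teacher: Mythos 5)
Your Case (a) argument is essentially the paper's (applied with $\underline{a}=Q_2$ instead of $Q_1$, but that is a symmetric choice), and your identification of the five-edge pattern for an absorbing $4r$-set, with the ``high-degree'' $r$-set placed at the position $S_2$, matches the paper's configuration $\underline{x}'\underline{w}'$, $\underline{y}'\underline{z}'$, $\underline{x}\underline{x}'$, $\underline{y}\underline{y}'$, $\underline{w}'\underline{z}'$ exactly.

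Your Case (b), however, is a genuinely different route from the paper's, and the difference costs you a factor of $\g$ that makes the stated bound fail. The paper does \emph{not} count Case (b) ``cold''. It first splits on whether \eqref{eq:good} holds: if it does, one already gets $\g^3 n^{2r}$ absorbing $2r$-sets; if it fails, the paper extracts two structural facts that you discard, namely $|\Lambda\cap N_H(\underline{y})| < \tfrac{\g}{2}\binom{n}{r}$ and that all but $\tfrac{\g}{2}\binom{n}{r}$ of the $\underline{x}'\in N_H(\underline{x})$ satisfy $|N_H(\underline{x}')\cap N_H(\underline{y})|<\tfrac\g2\binom{n}{r}$. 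Those two facts together yield, for a set of at least $(\tfrac12-\g)\binom{n}{r}$ choices of $\underline{x}'=S_1$, a guarantee of at least $\tfrac\g3\binom{n}{r}$ choices of $\underline{w}'=S_2\in\Lambda\cap N_H(\underline{x}')$. That is order $\g N^2$ good pairs $(S_1,S_2)$. Your double-counting identity, which cleanly bypasses the worry that $\mathcal A$ might miss $N_G(Q_1)$, gives only $|\mathcal A|\cdot\tfrac\g2 N \ge \g^2 N^2$ pairs --- weaker by a factor of $\g$. Propagating through the count, the paper ends with $\approx \tfrac{\g^2}{24}\binom{n}{r}^4$ ordered $4$-tuples while you end with $\approx\tfrac{\g^3}{4}\binom{n}{r}^4$. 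After dividing by the multiplicity $\binom{4r}{r}\binom{3r}{r}\binom{2r}{r}=(4r)!/(r!)^4$ (not $4!$: your ``$4!$-multiplicity'' is correct only for $r=1$) and converting $\binom{n}{r}^4\approx n^{4r}/(r!)^4$, the paper gets $\tfrac{\g^2}{24(4r)!}n^{4r}\ge\g^3 n^{4r}$ since $\g\ll 1/r$, but you get $\tfrac{\g^3}{4(4r)!}n^{4r}$, which is strictly \emph{less} than $\g^3 n^{4r}$ for every $r\ge 1$ and every $\g>0$. So as written your argument does not produce $\g^3 n^{4r}$ absorbing $4r$-sets.

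The gap is purely quantitative and repairable: either reinstate the paper's dichotomy on \eqref{eq:good} so that in the genuinely $4r$-set branch you recover the extra $\g$ factor, or weaken the claim to (say) $\g^4 n^{4r}$ absorbing $4r$-sets, which your count does deliver and which is still more than enough for Lemma~\ref{lem:G} since the downstream absorption argument only needs $\xi n^{4r}$ for some $\xi\ll\g$. But the claim as stated, with the constant $\g^3$, is not established by the double-counting shortcut alone.
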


\begin{claim}\label{clm:abG}
If neither Case (a) or Case (b) holds, then $G = K_{\frac{N}{2}, \frac{N}{2}} \pm \b N^2$ or
$\overline{G} = K_{\frac{N}{2}, \frac{N}{2}} \pm \b N^2$.
\end{claim}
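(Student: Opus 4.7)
The plan is to fix an $r$-tuple $\underline{a}$ witnessing the failure of Case (a) and then read off the structure of $G := G(H)$ from the four sets $A := N_G(\underline{a})$, $B := V(G) \setminus (A \cup \{\underline{a}\})$, $S := \{\underline{b} : |N_G(\underline{a}) \cap N_G(\underline{b})| < \gamma N\}$, and $T := V(G) \setminus S$. Since $\alpha \ll \gamma$ and $\binom{n-r}{r}/N \to 1$, the hypothesis $\delta_r(H) \geq (1/2 - \alpha)\binom{n-r}{r}$ gives $d_G(\underline{b}) \geq (1/2 - 2\gamma)N$ for every $\underline{b}$. The failure of Case (a) yields $|S| \geq (1/2 - \gamma)N$, and since each $\underline{b} \in S$ has fewer than $\gamma N$ neighbours in $A$ but degree at least $(1/2 - 2\gamma)N$, we deduce $|B| \geq (1/2 - 3\gamma)N$ and hence $|A| \leq (1/2 + 3\gamma)N$. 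Switching the order of summation gives $\sum_{\underline{b} \neq \underline{a}} |N_G(\underline{a}) \cap N_G(\underline{b})| = \sum_{\underline{c} \in A}(d_G(\underline{c}) - 1)$, which equals $|A|N/2 \pm O(\gamma)N^2$ once the failure of Case (b) and the minimum degree are used to place all but $O(\gamma)N$ vertex degrees within $\gamma N$ of $N/2$; splitting the left side into $S$- and $T$-contributions and bounding the $S$-part trivially by $|S|\gamma N$ forces $|T| \geq (1/2 - O(\gamma))N$. Consequently $|A|, |B|, |S|, |T|$ are all $(1/2 \pm O(\gamma))N$.

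The decisive dichotomy comes from refining $T$ by setting $T^* := \{\underline{b} : |N_G(\underline{a}) \cap N_G(\underline{b})| \geq |A| - \sqrt{\gamma}N\}$; the same computation shows the deficit $\sum_{\underline{b} \in T}(|A| - |N_G(\underline{a}) \cap N_G(\underline{b})|)$ is $O(\gamma)N^2$, so Markov yields $|T \setminus T^*| \leq O(\sqrt{\gamma})N$. I then double count the $G$-edges between $A \cap T^*$ and $A \cap S$: from the $A \cap S$ side each vertex has fewer than $\gamma N$ neighbours in $A$, giving at most $|A \cap S|\gamma N$ edges, whereas from the $A \cap T^*$ side each vertex is adjacent to all but $\sqrt{\gamma}N$ vertices of $A$, contributing at least $|A \cap T^*|(|A \cap S| - \sqrt{\gamma}N)$ edges. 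Comparing the two estimates yields either $|A \cap S| \leq 2\sqrt{\gamma}N$ or $|A \cap T^*| \leq 2\gamma N$; in the latter case, combined with $|T \setminus T^*| \leq O(\sqrt{\gamma})N$, we get $|A \cap T| \leq O(\sqrt{\gamma})N$ and hence $|A \cap S| \geq |A| - O(\sqrt{\gamma})N$. This is the hard step: it is the only place where the counting is tight, and it rules out the intermediate regime in which $|A \cap S|$ is neither tiny nor close to $|A|$, a regime in which $G$ would look like a blended bipartite/clique graph close to neither $K_{N/2, N/2}$ nor its complement.

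The two alternatives now lead to the two claimed outcomes via the degree identity $2e(G[A]) + e(A, B) = \sum_{\underline{c} \in A} d_G(\underline{c}) - |A| = |A|N/2 \pm O(\gamma)N^2$ and its analogue $2e(G[B]) + e(A, B) = |B|N/2 \pm O(\gamma)N^2$ (using that $\underline{a}$ has no neighbour in $B$). If $|A \cap S| \geq |A| - O(\sqrt{\gamma})N$, then $2e(G[A]) \leq |A \cap S|\gamma N + |A \setminus S| \cdot |A| = O(\sqrt{\gamma})N^2$, which forces $e(A, B) \geq N^2/4 - O(\sqrt{\gamma})N^2$ and $e(G[B]) = O(\sqrt{\gamma})N^2$; together with $|A|, |B| = N/2 \pm O(\gamma)N$ this places $G$ within $O(\sqrt{\gamma})N^2 \leq \beta N^2$ edges of $K_{N/2, N/2}$, for $\gamma$ chosen sufficiently small relative to $\beta$. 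If instead $|A \cap S| \leq 2\sqrt{\gamma}N$, then $|B \cap T| = |T| - |A \cap T| - 1 \leq O(\sqrt{\gamma})N$, so $e(A, B) \leq |B \cap S|\gamma N + |B \cap T| \cdot |A| = O(\sqrt{\gamma})N^2$; the two identities then place $e(G[A])$ and $e(G[B])$ within $O(\sqrt{\gamma})N^2$ of $\binom{|A|}{2}$ and $\binom{|B|}{2}$ respectively, so $\overline{G}$ is $\beta N^2$-close to $K_{N/2, N/2}$.
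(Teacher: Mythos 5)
Your proof is correct, and it reaches the same dichotomy as the paper but by a noticeably different internal route. Both you and the paper set $A := N(a)$ and observe that the ``low co-degree'' set ($S$ in your notation, $B$ in the paper's Lemma~\ref{lem:abG}) is also of size $\approx N/2$; both proofs then split according to whether $A$ and $S$ are almost disjoint or almost nested, which is the same underlying dichotomy since $|A\cap S|\approx|\overline{A\cup S}|$ when $|A|,|S|\approx N/2$. The difference is in how condition (b) is exploited and how the dichotomy is established. The paper proves a standalone claim bounding $e(\bar A,\bar B)$ by a direct contradiction: if it were large, one would find $\Omega(\sqrt\gamma\,N)$ vertices of degree $\ge(\tfrac12+\gamma)N$, violating (b). It then simply thresholds on $|\overline{A\cup B}|\lessgtr\gamma^{1/4}N$ and checks both sides. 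You instead use (b) to get a near-regularity statement (all but $O(\gamma)N$ degrees are $N/2\pm O(\gamma)N$), push that through the co-degree sum identity $\sum_{b\ne a}|N(a)\cap N(b)|=\sum_{c\in A}(d(c)-1)$, extract the deficit/Markov bound $|T\setminus T^*|\le O(\sqrt\gamma)N$, and then derive the dichotomy by double-counting edges between $A\cap T^*$ and $A\cap S$. Your route has the conceptual advantage of showing explicitly that the intermediate regime $2\sqrt\gamma\,N\ll|A\cap S|\ll|A|$ is impossible (so the split is forced rather than chosen), and the degree-sum identities make the final edge estimates mechanical; the paper's route avoids the $T^*$/Markov machinery and is somewhat shorter. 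Both arguments are sound and the constants in both are absorbed by $\gamma\ll\beta$.

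One small bookkeeping remark: in a few places you should keep track of the single vertex $\underline{a}$ itself (it lies in $T$ but not in $A$ or $B$), e.g.\ in the identity $\sum_{c\in A}d(c)=2e(G[A])+e(A,B)+|A|$ the $+|A|$ comes from the edges to $\underline a$, and in $|T|=|A\cap T|+|B\cap T|+1$. You do handle these correctly in the final case analysis, but it is worth making the $\pm1$'s explicit since the identity $2e(G[A])+e(A,B)=\sum_{c\in A}d(c)-|A|$ would be off by $|A|$ without the observation that every $c\in A$ is adjacent to $\underline a$.
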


\noindent
{\bf Proof of Claim~\ref{clm:ab}.}
Given a $2r$-set $Q= \{x_1, \dots , x_r , y_1 , \dots , y_r \} \subseteq V(H)$, we will consider two types of absorbing sets for $Q$:
\begin{description}
\item[Absorbing $2r$-sets] These consist of a single edge $x'_1\dots x'_r y'_1 \dots y'_r \in E(H)$ with the property that
both $x_1 \dots x_r x'_1 \dots x'_r$ and $y_1 \dots y_r y'_1 \dots y'_r$ are edges of $H$.

\item[Absorbing $4r$-sets] These consist of distinct vertices $x'_1, \dots, x'_r$, $y'_1 , \dots , y'_r$, $w'_1, \dots , w'_r $, $z'_1 \dots , z'_r \in V(H)$ such that $x'_1 \dots x'_r w'_1 \dots w'_r\,$, $y'_1 \dots y'_r z'_1 \dots  z'_r$ and $ w'_1 \dots w'_r z'_1 \dots  z'_r$ are edges in $H$. Furthermore, $x_1 \dots x_r x'_1 \dots x'_r$ and $y_1 \dots y_r y'_1 \dots y'_r$ are also edges of $H$ (see Figure~1).
\end{description}
\begin{figure}\label{picture}
\begin{center}\footnotesize
\includegraphics[width=1\columnwidth]{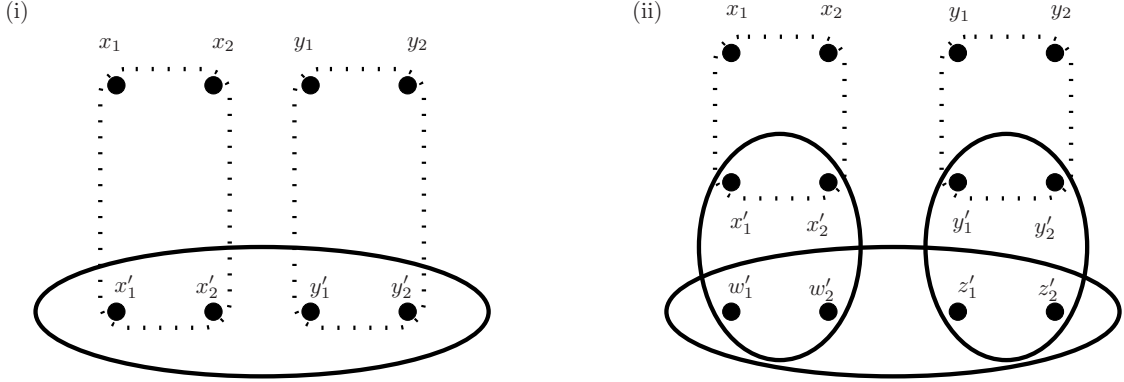}
\caption{The (i) absorbing $2r$-set and (ii) absorbing $4r$-set in the case when $r=2$.}
\end{center}
\end{figure}

Write $\underline{x}:=x_1 \dots x_r$ and $\underline{y}:=y_1 \dots y_r$. For any two (not necessarily disjoint) $r$-tuples $\underline{a}$, $\underline{b} \in \binom{V(H)}{r}$ we call $\underline{a}$
a \emph{good $r$-tuple for $\underline{b}$} if $|N_H (\underline{a}) \cap N_H (\underline{b})|\geq \g \binom{n}{r}/2$.
 We first observe that $Q$ has at least $ \g^3 n^{2r}$ absorbing $2r$-sets if there are
\begin{align}\label{eq:good}
 & \text{at least $\frac{\g}{2} \binom{n}{r}$ good $r$-tuples in $N_H(\underline{x})$ for $\underline{y}$}\\
 & \text{or  at least $\frac{\g}{2} \binom{n}{r}$ good $r$-tuples in $N_H(\underline{y})$ for $\underline{x}$.}\nonumber
\end{align}
Indeed, assume that there are at least $\g \binom{n}{r}/2$ good $r$-tuples in $N_H(\underline{x})$ for $\underline{y}$.
There are at most $r\binom{n}{r-1}$ $r$-tuples in $\binom{V(H)}{r}$ that contain at least one  element from $\{y_1, \dots , y_r \}$.
So there are at least $\g \binom{n}{r}/2 - r\binom{n}{r-1}$ $r$-tuples in $N_H(\underline{x})$ that are good for $\underline{y}$ and disjoint from $\underline{y}$. Let us pick such an $r$-tuple $\underline{x}'=(x'_1 \dots x'_r)$. Thus,
$|N_H(\underline{x}')\cap N_H(\underline{y})|\ge \g \binom{n}{r}/2$. We pick $\underline{y}'=(y'_1\dots y'_r) \in
N_H(\underline{x}')\cap N_H(\underline{y})$ such that $\underline{y}'$ is disjoint from $\underline{x}$.
Note that there are at least $\g \binom{n}{r}/2 - r\binom{n}{r-1}$ choices for $\underline{y}'$.
Notice that the $2r$-set $\{x'_1, \dots , x'_r, y'_1 , \dots , y'_r \}$ is an absorbing set for $Q$ since
$x'_1\dots x'_r y'_1 \dots y'_r \,$,  $x_1 \dots x_r x'_1 \dots x'_r$ and $y_1 \dots y_r y'_1 \dots y'_r$ are edges in $H$.
Since an absorbing $2r$-set may be counted $\binom{2r}{r}$ times, this argument implies that there are at least
\[
\left( \frac{\g}{2} \binom{n}{r} - r\binom{n}{r-1} \right)^2 \frac{1}{\binom{2r}{r}} \ge \g^3  n^{2r}
\]
absorbing $2r$-sets for $Q$. We reach the same conclusion when there are at least $\gamma \binom{n}{r}/2$ good $r$-tuples in
$N_H (\underline{y})$ for $\underline{x}$.

\medskip
Now assume that Case (a) holds. This implies that there are at least $(\frac{1}{2} + \g) \binom{n}{r}$  good $r$-tuples for
$\underline{x}$.
 By the minimum $r$-degree condition, $d_H (\underline{y})\ge (\frac12 - \a)\binom{n}{r}$. So there are at least
 $(\g - \a)\binom{n}{r}\ge \g \binom{n}{r}/2$ $r$-tuples in $N_H(\underline{y})$ that are good for $\underline{x}$.  Thus \eqref{eq:good} holds and consequently $Q$ has at least $\g^3 n^{2r}$ absorbing $2r$-sets.

\medskip
Next assume Case (b) holds. Let $\Lambda:=\{ \underline{a} \in \binom{V(H)}{r}: d_H(\underline{a})\ge (\frac12 + \g)\binom{n}{r} \}$.
So by assumption, $|\Lambda |\ge 2\g \binom{n}{r}$. We also assume that \eqref{eq:good} fails (otherwise we are done). Every
$r$-tuple $\underline{a} \in \Lambda$ is good for arbitrary $\underline{b} \in \binom{V(H)}{r}$ because
$| N_H(\underline{a})\cap N_H(\underline{b}) |\ge (\g - \a )\binom{n}{r}\ge \g \binom{n}{r} /2$.
Hence $|\Lambda \cap N_H(\underline{y})|< \g \binom{n}{r}/2$.
On the other hand, less than $\g \binom{n}{r}/2$ $r$-tuples in $N_H(\underline{x})$ are good for $\underline{y}$ and consequently at least $(\frac12 - \a)\binom{n}{r} - \frac{\g}{2} \binom{n}{r} $ $r$-tuples $\underline{x}' \in N_H(\underline{x})$
satisfy $|N_H(\underline{x}')\cap N_H(\underline{y})|< \g \binom{n}{r}/2$.
We pick such an $r$-tuple $\underline{x}'$ that is disjoint from $\underline{y}$; there are at least $(\frac12 - \a)\binom{n}{r} - \frac{\g}{2} \binom{n}{r} - r\binom{n}{r-1}\ge (\frac12 - \g)\binom{n}r$ $r$-tuples with this property. Since
\[
| N_H(\underline{x}') \cup N_H(\underline{y}) | \ge 2 \left(\frac12 - \a \right)\binom{n}{r} - \frac{\g}{2} \binom{n}{r}
 \ge \binom{n}{r} - {\g} \binom{n}{r},
\]
it follows that
\begin{align}
| \Lambda \cap N_H(\underline{x}')| & \ge |\Lambda| - | \Lambda \cap N_H(\underline{y})| - | \overline{(N_H(\underline{x}')
\cup N_H(\underline{y}))} | \nonumber \\
& \ge  2\g \binom{n}{r} - \frac{\g}{2} \binom{n}{r} - {\g} \binom{n}{r} = \frac{\g}2 \binom{n}{r}. \label{eq;x'y'}
\end{align}
Now pick any $\underline{w}' \in \Lambda \cap N_H(\underline{x}')$ that is disjoint from $Q$.
(Note there are at least $\frac{\g}2 \binom{n}{r} - 2r\binom{n}{r-1} \ge \frac{\g}{3} \binom{n}{r}$ choices for $\underline{w}'$.)
Next pick an $r$-tuple $\underline{y}'\in N_H(\underline{y})$ such that $\underline{y}'$ is disjoint from
$\underline{x}$, $\underline{x}'$ and $\underline{w}'$. (There are at least $(\frac12 - \a) \binom{n}{r} - 6r\binom{n}{r-1}
\ge (\frac12 - \g) \binom{n}{r}$ choices for $\underline{y}'$ here.)
By the definition of $\Lambda$,  there are at least $(\g - \a) \binom{n}{r}$ pairs in $N_H(\underline{w}'
)\cap N_H(\underline{y}')$.
We pick $\underline{z}'\in N_H(\underline{w}')\cap N_H(\underline{y}')$  such that $\underline{z}'$ is disjoint from
$\underline{x}$, $\underline{y}$ and $\underline{x}'$. (There are at least $(\g - \a) \binom{n}{r} - 6r\binom{n}{r-1}
\ge \g  \binom{n}{r}/2$ choices for $\underline{z}'$ here.)

Let $S$ denote the $4r$-set consisting of the vertices contained in $\underline{x}'$, $\underline{y}'$, $\underline{w}'$
 and $\underline{z}'$. By the choice of $\underline{x}'$, $\underline{y}'$, $\underline{w}'$
 and $\underline{z}'$, $S$ is an absorbing $4r$-set for $Q$.

In summary, there are at least $(\frac12 - \g) \binom{n}{r} $ choices for $\underline{x}'$, at least
$\frac{\g}{3} \binom{n}{r} $ choices for $\underline{w}'$, at least $(\frac12 - \g) \binom{n}{r}$ choices for
$\underline{y}'$  and at least $\frac{\g}{2} \binom{n}{r}$ choices for $\underline{z}'$. Since each absorbing $4r$-set may be counted $\binom{4r}{r} \binom{3r}{r} \binom{2r}{r}$ times, there are at least
\[
\left[ \left(\frac12 - \g \right) \binom{n}{r}  \right]^2 \frac{\g}3 \binom{n}{r} \frac{\g}{2} \binom{n}{r}
\times \frac{1}{\binom{4r}{r} \binom{3r}{r} \binom{2r}{r}}
 \stackrel{(\ref{eq:agxi})}{\geq } \g ^3 n^{4r}
\]
absorbing $4r$-sets for $Q$, as desired.
\endproof

Claim~\ref{clm:abG} follows from the following lemma (by letting $G= G(H)$) immediately.

\begin{lemma}\label{lem:abG}
For any $\beta > 0$, there exist $\gamma > 0$ and $n_0\in \mathbb{N}$
such that following holds. Let $G=(V,E)$ be a graph on an even $N\ge n_0$ number of vertices such that
$\delta(G)\ge (1/2 - \gamma) N$. In addition, $G$ satisfies

\begin{description}
\item[(a)] There exists $ {a} \in V$ such that at most $(\frac{1}{2} + \gamma) N$
vertices ${b} \in V$ satisfy $| N(a) \cap N(b) | \ge \gamma N$.

\item[(b)] $| \{ v \in V: d(v)\ge (\frac12 + \gamma)N \} |< 2\gamma N$.
\end{description}

Then either $G= K_{N/2, N/2} \pm \beta N^2$ or $\overline{G} = K_{N/2, N/2} \pm \beta N^2$.
\end{lemma}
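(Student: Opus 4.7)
The plan is as follows. Fix constants with $\gamma \ll \beta$, take $N$ large, and let $a$ denote the vertex provided by condition~(a). Write $A := N(a)$, $\bar A := V \setminus (A \cup \{a\})$, and partition $V \setminus \{a\} = S \cup L$, where $S := \{b : |N(a) \cap N(b)| < \gamma N\}$ and $L := \{b : |N(a) \cap N(b)| \geq \gamma N\}$. Condition~(a) gives $|S| \geq (1/2 - 2\gamma)N$. I first observe that $|A| \leq (1/2 + 3\gamma)N$: if instead $d(a) \geq (1/2 + 3\gamma)N$, then $|N(a) \cap N(b)| \geq d(a) + d(b) - N \geq \gamma N$ for every $b$, putting everyone into $L$ and contradicting (a). Since $|N(b) \cap A| = |N(a) \cap N(b)|$, each $b \in S$ has at most $\gamma N$ neighbors in $A$, and by the minimum degree together with $|V \setminus A| \leq (1/2 + 3\gamma)N$, at least $|V \setminus A| - 5\gamma N$ neighbors in $V \setminus A$.

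The key step is a dichotomy: writing $s_1 := |S \cap A|$ and $s_2 := |S \cap \bar A|$, I claim $\min(s_1, s_2) \leq 10 \gamma N$. Double-counting the edges between $S \cap A$ and $S \cap \bar A$, from the $A$-side each $b \in S \cap A$ is adjacent to at least $s_2 - 5\gamma N$ vertices of $S \cap \bar A$ (since it is adjacent to all but $\leq 5\gamma N$ of $\bar A$), giving at least $s_1(s_2 - 5\gamma N)$ such edges; from the $\bar A$-side each $b' \in S \cap \bar A$ has fewer than $\gamma N$ neighbors in $A$ and hence in $S \cap A$, giving at most $s_2 \gamma N$ such edges. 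If both $s_1, s_2 \geq 10\gamma N$, these bounds are incompatible.

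In Case~I ($s_2 \leq 10\gamma N$), almost every vertex of $A$ lies in $S$, so $e(A) \leq O(\gamma) N^2$, while each of the $\geq (1/2 - O(\gamma))N$ vertices in $S \cap A$ is adjacent to nearly all of $V \setminus A$, giving $e(A, V \setminus A) \geq N^2/4 - O(\gamma)N^2$. Combining with the upper bound $|E(G)| \leq N^2/4 + O(\gamma) N^2$, which follows from condition~(b) and the minimum degree, one obtains $e(V \setminus A) \leq O(\gamma)N^2$. Hence $G$ is $O(\gamma)N^2$-close to the complete bipartite graph with parts $A$ and $V \setminus A$; since $|A|$ differs from $N/2$ by at most $O(\gamma)N$, a rebalancing of the parts costs only another $O(\gamma)N^2$ edges and yields $G = K_{N/2, N/2} \pm \beta N^2$ once $\gamma$ is chosen small enough.

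Case~II ($s_1 \leq 10\gamma N$) is symmetric: almost all of $\bar A$ lies in $S$, so each such vertex has fewer than $\gamma N$ neighbors in $A$, whence $e(A, V \setminus A) \leq O(\gamma) N^2$ (the contribution of $a$ itself being only $d(a) = O(N)$). The lower bound $|E(G)| \geq N^2/4 - O(\gamma)N^2$ from the minimum degree then forces $e(A) + e(V \setminus A)$ to be within $O(\gamma) N^2$ of the maximum $\binom{|A|}{2} + \binom{|V \setminus A|}{2}$, so both $A$ and $V \setminus A$ are near-cliques in $G$ --- equivalently, $\bar G$ is $O(\gamma)N^2$-close to the complete bipartite graph with parts $A$ and $V \setminus A$, and thus $\bar G = K_{N/2, N/2} \pm \beta N^2$ after rebalancing. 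The main obstacle I anticipate is the dichotomy step, which must leverage the codegree condition carefully on both sides of the partition $(A, \bar A)$; thereafter the two cases reduce to routine edge-counting.
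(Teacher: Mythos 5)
Your proof is correct, and while it shares the paper's overall architecture (setting $A:=N(a)$, $S:=\{b:|N(a)\cap N(b)|<\gamma N\}$, deriving $|A|,|S|=(\tfrac12\pm O(\gamma))N$, and ending in the same two outcomes), the pivotal intermediate step is genuinely different. The paper's proof hinges on a claim that $e(\bar A,\bar B)\le 8\sqrt{\gamma}\,|\bar A||\bar B|$, which it proves by showing that otherwise many vertices of $\bar A$ would have degree at least $(\tfrac12+\gamma)N$, contradicting hypothesis (b); it then splits cases according to whether $|\overline{A\cup B}|$ is small or large. You instead locate $S$ relative to the partition $(A,\bar A)$: your dichotomy $\min(|S\cap A|,|S\cap\bar A|)\le 10\gamma N$ follows from a clean double count of $e(S\cap A, S\cap\bar A)$ using only the minimum degree and the definition of $S$, with no appeal to (b). Hypothesis (b) then enters exactly once, as the global bound $|E(G)|\le N^2/4+O(\gamma)N^2$ needed in your Case I to rule out $V\setminus A$ being dense (and it is genuinely needed there: an independent set $A$ complete to a clique $V\setminus A$ satisfies the degree condition and (a) but not (b)). Your route buys a more transparent case distinction and isolates the single use of (b); the paper's route yields explicit polynomial-in-$\gamma$ constants throughout, but both reach the same conclusion after the same kind of rebalancing of the parts.
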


\begin{proof}
Let $A:= N(a)$ and $B:=\{ {b} \in V: | A \cap N (b) | < \g N \}$.  Then $|A| \ge (\frac12 - \gamma)N$ and $|B|\ge (\frac12 - \g) N$.

We also need an upper bound on $|A|$. Fix $b \in B$. Since $|N(b)| \ge (\frac12 - \gamma) N$,  we have
\[
|A| + \left(\tfrac12 - \gamma \right) N \le |A| + |N(b)| = |A\cup N(b)| + |A\cap N(b)| \le N + \g N,
\]
which gives $|A|\le (\tfrac12 + 2 \g) N$.

Let $e(A, B)$ denote the number of \emph{ordered} pairs $a, b$ such that $a\in A$, $b\in B$, and $ab\in E$.
(Therefore, if $a,b \in A\cap B$ and $ab \in E$ then $ab$ counts twice to the value of $e(A,B)$.)
By the definition of $B$, we have $e(A, B)\le \g N |B| $. Since $\d(G)\ge (\frac12 - \g) N$ we have that
\begin{equation}\label{eq:ABbar}
e (\bar{A}, B)\ge (1/2 - 2\g) N  |B|.
\end{equation}
where, as usual $\bar{A}:= V\setminus A$. Next we show that $e(\bar{A}, \bar{B})$ is very small.

\begin{claim}\label{clm:AcB}
$e(\bar{A}, \bar{B})\le 8 \sqrt{\g} |\bar{A}| |\bar{B}|$.
\end{claim}
\begin{proof}
Assume for a contradiction that the claim is false.
Set $A_1 := \{{x} \in \bar{A}:  d({x}, \bar{B}) \ge 4\sqrt{\g} |\bar{B}| \}$. By assumption
\[
8\sqrt{\g} |\bar{A}| |\bar{B}| \le e(\bar{A} , \bar{B} ) \le |A_1| |\bar{B}| +4\sqrt{\g}|\bar{B}|  |\bar{A}|,
\]
which gives that $|A_1|\ge 4\sqrt{\g} |\bar{A}|$. By \eqref{eq:ABbar}, as $|\bar{A}|\le (\frac12 + \g) N $, we derive that
\begin{equation}\label{eq:AcB}
e(\bar{A}, B)\ge (\tfrac12 - 2 \g) N |B| \ge (1-6\g)(\tfrac12+ \g) N |B| \ge (1 - 6\g) |\bar{A}| |B|.
\end{equation}
Let $A_2 := \{ {x} \in \bar{A}:  d({x}, {B}) \ge (1- 3\sqrt{\g}) |B| \}$. We claim that $|A_2|\ge (1 - 3\sqrt{\g}) |\bar{A}| $. Indeed, for convenience, consider $\bar{e}(\bar{A}, B)$, the number of ordered pairs $a, b$ such that $a\in \bar{A}$, $b\in B$, and $ab\not\in E$.
If $|A_2|< (1 - 3\sqrt{\g}) |\bar{A}| $, then $\bar{e}(\bar{A}, B)\ge 3\sqrt{\g} |\bar{A}| 3\sqrt{\g} |B| = 9\g |\bar{A}| |B|$, contradicting \eqref{eq:AcB}.

Let $A_0 := A_1 \cap A_2$. We have $|A_0|\ge (4\sqrt{\g} - 3\sqrt{\g}) |\bar{A}|$. Since $|\bar{A}|\ge N /3$ and $\g \le 1/36$,  we derive that $|A_0| \ge \sqrt{\g}N/3\ge 2\g N$. For every ${x}\in A_0$,
we have
\begin{align*}
d({x}) & =  d(x, B) + d(x, \bar{B}) \\
& \ge (1- 3\sqrt{\g}) |B| + 4\sqrt{\g} |\bar{B}| = (1- 7\sqrt{\g}) |B| + 4\sqrt{\g} N \\
& \ge \left(\frac12 - \frac{7}{2}\sqrt{\g} + 4\sqrt{\g} - \g \right)N
\ge  \left(\frac12 + \g \right)N.
\end{align*}
(The penultimate inequality follows since $ |B|\ge  (\tfrac12 - \g) N$.)
This is a contradiction to the assumption \textbf{(b)}.
\end{proof}

Now we go back to the proof of Lemma~\ref{lem:abG}. We separate the cases by whether $|\overline{A\cup B}| \le {\g}^{1/4} N$ or not.

First assume that $|\overline{A\cup B}| \le {\g}^{1/4}N$.  Since $(\frac12 - \g)N \le |A|\le (\frac12 +2\g) N$, we can find a set $V_1\subseteq V(G)$ of size $N/2$ such that $|V_1 \triangle A| \le 2\g N$. Let $V_2 := V(G) \backslash V_1$. Thus,
\begin{align*}
e (V_1, V_2) & \le e(A\cap V_1, B\cap V_2) + e(V_1\setminus A, V_2) + e(V_1, V_2\cap A) + e(V_1, V_2\setminus (A\cup B))  \\
& \le e(A, B) + \left( |V_1\setminus A| \frac{N}2 + |V_2\cap A|\frac{N}2 \right) + |\overline{A\cup B}| \frac{N}2\\
& \le  \g N |B|+ 2\g N \frac{N}2 + \g^{1/4} N \frac{N}{2} \le \g^{1/4} N^2.
\end{align*}
Since $\d(G)\ge (\frac12 - \g)N $, we derive that for $i=1, 2$,
\[
2e(V_i) = e(V_i, V_i)\ge \tfrac{N}2 (\tfrac12- \g) N - \g^{1/4} N^2 \geq  \tfrac{N^2}{4} - 2 \g^{1/4} N^2.
\]
Thus, we can delete at most $\g^{1/4} N^2$ edges between $V_1$ and $V_2$ and add at most $\g^{1/4} N^2$ edges in each of $V_1$ and $V_2$ to turn $G$ into a graph consisting of two vertex-disjoint cliques; one on $V_1$, the other on $V_2$. In other words, $\overline{G} = K_{N/2, N/2} \pm 3\g^{1/4} N^2$.

\medskip

Now assume that $|\overline{A\cup B} |\ge {\g}^{1/4} N$. Then $| B\setminus A| \le |\bar{A}| - {\g}^{1/4} N \le (\frac12 + \g - {\g}^{1/4}) N $. By Claim~\ref{clm:AcB}, $e(B\setminus A, \overline{A\cup B})\le e(\bar{A}, \bar{B})\le 8\sqrt{\g} |\bar{A}| |\bar{B}| \le 8\sqrt{\g} N^2$. Together with $e (B\setminus A, A)\le e (B, A)\le |B| \g N\le \g N^2$, it gives that
\begin{align*}
|B\setminus A| (\tfrac12 - \g) N & \le e(B\setminus A, V)\le e(B\setminus A, B\setminus A) + e (B\setminus A, A) + e(B\setminus A, \overline{A\cup B}) \\
& \le |B\setminus A| (\tfrac12 + \g - {\g}^{1/4}) N + {\g} N^2  + 8 \sqrt{\g} N^2.
\end{align*}
This implies that $({\g}^{1/4} - 2\g) N |B\setminus A|\le 9\sqrt{\g} N^2$ and so $|B\setminus A|\le 10{\g}^{1/4} N$. Similarly we can show that $|A\setminus B|\le 10{\g}^{1/4} N$. Now pick a set $V_1\subseteq V(G)$ of size $N/2$ such that $|V_1 \cap A|$ is maximized.  Thus, $|V_1 \setminus A| \le \g N$. Then, as $e(A\cap B, A)\le e(B, A)\le \g N^2$, we have
%$|(V_1\cap A) \setminus (A\cap B)|\le 8 \g^{1/4} N$.
\begin{align*}
e(V_1) &  \le e(V_1\setminus A, V_1) + e(A)\le e(V_1\setminus A, V_1) + e(A\cap B, A) + e(A\setminus B) \\
& \le \g \frac{N^2}{2} +  \g N^2 +  \frac{1}{2} (10 \g^{1/4} N)^2 \le 52 \sqrt{\g} N^2.
\end{align*}

Let $V_2 := V(G) \backslash V_1$. Since $\d(G)\ge (\frac12 - \g)N $, we have
\[
e(V_1, V_2)\ge \frac{N}2 (\tfrac12- \g) N - 52 \sqrt{\g} N^2  \geq  \frac{N^2}{4} - 53 \sqrt{\g} N^2.
\]
Further, by Claim~\ref{clm:AcB} and since $|A\cap V_2| = |A \backslash V_1|\leq 2\g N$,
\begin{align*}
e(V_2)  & \le e(A\cap V_2, V_2) + e(\bar{A}) \le e(A\cap V_2, V_2) + e(\bar{A} \cap \bar{B}, \bar{A}) + e(\bar{A}\cap B)\\
& \le |A\cap V_2| \frac{N}{2} + e(\bar{B}, \bar{A}) + e(B\setminus A) \\
& \le 2\g \frac{N^2}{2} + 8\sqrt{\g} N^2 + \frac{1}{2} (10 \g^{1/4} N)^2 \le 59 \sqrt{\g} N^2.
\end{align*}

Hence, we can add at most $ 53 \sqrt{\g} N^2$ edges between $V_1$ and $V_2$ and delete at most $  52 \sqrt{\g} N^2 +  59 \sqrt{\g} N^2$ edges inside $V_1$ and $V_2$ to turn $G$ into a complete balanced bipartite graph. In other words, $G = K_{N/2, N/2} \pm  164 \sqrt{\g} N^2$.

Since $\gamma \ll \beta$ we conclude that either $G = K_{\frac{N}{2}, \frac{N}{2}} \pm \b N^2$ or $\overline{G} = K_{\frac{N}{2}, \frac{N}{2}} \pm \b N^2$, as desired.
 \end{proof}

This completes the proof of  Lemma~\ref{lem:G}.

\subsection{Proof of Lemma~\ref{lem:GH}}\label{end}\label{seccy}
We need the following structural result and prove it by applying Theorem~\ref{thm:RS} and Theorem~\ref{thm:Si}.

\begin{lemma}[Structure Lemma]\label{lem:str}
For any $\eta >0$ and $r \in \mathbb N$, there exist $\d  >0$ and $n_0 \in \mathbb N$ such that the following holds.
Suppose that $K$ is a complete $2r$-uniform hypergraph on $n \geq n_0$ vertices whose edge set is
partitioned into two sets $R$ (red) and $B$ (blue). Let $\Omega$ denote the collection of all $4r$-subsets $S\subseteq V(K)$ such that there exists a partition of $S= P_1\cup P_2\cup P_3\cup P_4$
where $|P_i|=r$ for all $1\leq i \leq 4$ and
such that exactly one of the four $2r$-sets $P_1 \cup P_2$, $P_2 \cup P_3$, $P_3 \cup P_4$, $P_4 \cup P_1$ is in $R$ or $B$ (the other three are in the other color class). Suppose that
\begin{description}
\item[(i)] $|R|, |B|\ge (\frac12 - \d)\binom{n}{2r}$ and;
\item[(ii)] $|\Omega| \leq \delta n^{4r}$.
\end{description}
Then either $K[R]= \mathcal B_{n,2r} \pm \eta n^{2r}$ or $K[B] =  \mathcal B_{n,2r} \pm \eta n^{2r}$.

\end{lemma}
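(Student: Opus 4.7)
I plan to proceed by contraposition: supposing that neither $K[R]$ nor $K[B]$ is $\eta$-close to $\mathcal{B}_{n,2r}$, I derive $|\Omega| > \delta n^{4r}$, contradicting assumption (ii). Set up a constant hierarchy $\delta \ll \alpha \ll \gamma \ll \beta \ll \eta$, where $\beta$ corresponds to the Keevash--Sudakov parameter (Theorem~\ref{thm:Si}) for error $\eta/2$, and $\alpha = \alpha(\mathcal{C}^{2r}_3, \gamma)$ comes from the hypergraph removal lemma (Theorem~\ref{thm:RS}). Under this hierarchy, the first step is to deduce that both $K[R]$ and $K[B]$ contain more than $\alpha n^{3r}$ copies of $\mathcal{C}^{2r}_3$. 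Indeed, if (say) $K[R]$ had fewer, we could delete at most $\gamma n^{2r}$ edges to render $K[R]$ $\mathcal{C}^{2r}_3$-free, obtaining a hypergraph on more than $(\tfrac12 - \beta)\binom{n}{2r}$ edges; Theorem~\ref{thm:Si} would then force $K[R]$ to be $\eta$-close to $\mathcal{B}_{n,2r}$, a contradiction.

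Given the abundance of expanded triangles in both colors, I would produce many $\Omega$-sets via the following observation. For any $R$-copy of $\mathcal{C}^{2r}_3$ on disjoint $r$-sets $\{Q_1, Q_2, Q_3\}$ and any disjoint $r$-set $Q_4$, the $4r$-set $S := Q_1 \cup Q_2 \cup Q_3 \cup Q_4$ lies in $\Omega$ via the natural partition $P_i = Q_i$ provided the triple of colors $(c(Q_1 Q_4), c(Q_2 Q_4), c(Q_3 Q_4))$ is mixed. This follows from a short check over the three inequivalent cyclic orderings of $\{Q_1, Q_2, Q_3, Q_4\}$: for any mixed triple, at least one of these orderings places the lone differently-colored $Q_i Q_4$ edge in the cyclic position alongside three same-colored edges. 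Let $M_R$ count ordered pairs (an $R$-triangle, a disjoint $Q_4$) with a mixed triple, and define $M_B$ analogously. Since each $S \in \Omega$ can arise from at most a constant $C(r)$ such ordered pairs (bounded by the number of ordered $4$-partitions of a $4r$-set into $r$-sets), we obtain $|\Omega| \geq (M_R + M_B)/C(r)$.

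The principal obstacle is establishing a lower bound $M_R + M_B \geq c(r)\alpha n^{4r}$, which together with $\delta \ll \alpha$ would furnish the required contradiction. Writing $M_R = t_R \binom{n - 3r}{r} - U_R$, where $t_R$ counts ordered $R$-triangles (on the order of $\alpha n^{3r}/(r!)^3$) and $U_R$ counts the ``monochromatic extensions'' for which $(c_1, c_2, c_3)$ is monochromatic, the main task is to bound $U_R + U_B$. Such monochromatic extensions correspond to labeled $4$-tuples of disjoint $r$-sets whose six induced $2r$-edges form either a fully monochromatic expanded $K_4$ or a ``triangle-plus-star'' color pattern. I would seek to bound $U_R + U_B$ by a Kruskal--Katona/supersaturation argument exploiting the near-balanced densities $|R|, |B| \approx \tfrac12 \binom{n}{2r}$: in the random-density model such restrictive patterns occupy only a bounded-density fraction (roughly $\tfrac{1}{16}$) of all $4$-tuples, whereas the main term $(t_R + t_B)\binom{n - 3r}{r}$ is of order $\alpha n^{4r}$. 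The delicate issue is obtaining this bound robustly rather than merely in expectation, and tightly enough that it dominates when $\alpha$ is small; this may require augmenting the counting with extra structural information extracted from the stability conclusion of the first step, or passing to a modified averaging that leverages contributions from both color classes simultaneously.
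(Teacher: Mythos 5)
Your first step---deducing via the removal lemma (Theorem~\ref{thm:RS}) and Keevash--Sudakov stability (Theorem~\ref{thm:Si}) that both $K[R]$ and $K[B]$ contain many copies of $\mathcal{C}^{2r}_3$---matches the paper's argument exactly. The divergence, and the gap, is in how you generate members of $\Omega$.

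You propose extending a red triangle $\{Q_1,Q_2,Q_3\}$ by a single additional $r$-set $Q_4$, and you correctly observe (your case check is sound) that whenever the triple of cross-colors $(c(Q_1Q_4),c(Q_2Q_4),c(Q_3Q_4))$ is mixed, some cyclic ordering of the four parts exhibits a bad $C^{2r}_4$. The problem is that mixedness is not automatic, and your outline explicitly defers the bound on the monochromatic extensions $U_R+U_B$. That step is genuinely the crux: without it, the count $M_R+M_B$ could in principle be zero (nothing a priori prevents every red triangle from having all-red or all-blue cross-links to every $Q_4$), so the argument as written does not close. Invoking a ``random-density model'' heuristic is not a proof, and the stability-plus-balance route you sketch to repair it is not worked out, as you acknowledge.

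The paper sidesteps this difficulty with one change to the construction: instead of a red triangle together with a single $r$-set, it takes a \emph{red} copy $T_1$ of $\mathcal{C}^{2r}_3$ and a vertex-disjoint \emph{blue} copy $T_2$. A short case analysis on the nine cross-edges between $V(T_1)$ and $V(T_2)$ shows that a bad copy of $C^{2r}_4$ always appears inside $V(T_1)\cup V(T_2)$: if some part of $T_1$ (or of $T_2$) has cross-edges of both colors one gets it directly, and otherwise all nine cross-edges share one color, in which case two same-colored edges of $T_1$ or $T_2$ together with two cross-edges produce the desired almost-monochromatic $4$-cycle. This makes ``mixedness'' automatic, with no density or supersaturation input. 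Since you already have $\geq\delta_1 n^{3r}$ triangles of each color, there are $\geq\delta_1^2 n^{6r}/2$ disjoint red/blue pairs, each forcing a bad $C^{2r}_4$ inside a $6r$-set; averaging over the $\binom{n-4r}{2r}$ extensions of each bad $4r$-set to a $6r$-set (and dividing by the bounded number of triangle pairs per $6r$-set) yields $|\Omega|\gtrsim\delta_1^2 n^{4r}>\delta n^{4r}$, the contradiction you want. Replacing your single-vertex-set extension by this two-triangle pairing would complete your argument along essentially the paper's lines.
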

\begin{proof} %[Proof of Lemma~\ref{lem:str}]
Given $\eta >0$ define additional constants $\delta , \delta _1, \eps$ such that
\begin{align}\label{hiera}
0< \delta \ll \delta _1 \ll \eps \ll \eta , 1/r .
\end{align}

Let $ C^{2r} _4$ denote the expanded $2r$-uniform $4$-cycle. That is, $ C^{2r} _4$ consists of
four disjoint sets $P_1, P_2, P_3, P_4$ of vertices of size $r$, and the edges $P_1 \cup P_2$, $P_2 \cup P_3$, $P_3 \cup P_4$, $P_4 \cup P_1$.
We call a $2$-colored copy of   $ C^{2r} _4$ \emph{bad} if exactly one of its four edges is in $R$ or $B$ (and the other three are in the other color class). A $4r$-set $S \in \binom{V(K)}{4r}$ is \emph{bad} if $K[S]$ contains a bad $ C^{2r} _4$. Thus (ii) says that the number of bad $4r$-sets is at most $\d n^{4r}$.

Observe that if $T_1$ is red copy of $\mathcal C ^{2r} _3$ and $T_2$ is a blue copy of $\mathcal C^{2r} _3$
such that $T_1$ and $T_2$ are vertex-disjoint, then there exists at least one bad copy of $ C^{2r} _4$ whose
vertex set is contained in $V(T_1)\cup V(T_2)$:
Let $\underline{a}, \underline{b}, \underline{c} $ denote the $r$-tuples in $V(T_1)$ such that $\underline{a} \cup \underline{b}$,
$\underline{b}  \cup \underline{c}$, $\underline{c} \cup \underline{a} \in E(T_1)$. Define
$\underline{x}, \underline{y}, \underline{z} \subseteq V(T_2) $ analogously. If there is a $\underline{v} \in \{ \underline{a}, \underline{b}, \underline{c} \}$ such  that $\underline{v} \cup \underline{w}_1 \in R$ and $\underline{v} \cup \underline{w}_2 \in B$
for some $\underline{w}_1, \underline{w}_2 \in \{\underline{x}, \underline{y}, \underline{z} \}$, then we obtained our
desired bad copy of $ C^{2r} _4$. For example, if $\underline{a} \cup \underline{x} \in B$ and
$\underline{a} \cup \underline{z} \in R$, then the edges $\underline{a} \cup \underline{x}, \underline{x} \cup \underline{y},
\underline{y} \cup \underline{z} \in B$ and
$\underline{a} \cup \underline{z} \in R$ induce a bad copy of $ C^{2r} _4$.
Similarly, if there exists $\underline{v} \in \{ \underline{x}, \underline{y}, \underline{z} \}$  such  that $\underline{v} \cup \underline{w}_1 \in R$ and $\underline{v} \cup \underline{w}_2 \in B$
for some $\underline{w}_1, \underline{w}_2 \in \{\underline{a}, \underline{b}, \underline{c} \}$, then we obtain
a bad copy of $ C^{2r} _4$.
If neither of these two cases holds, then all the edges of the form $\underline{v} \cup \underline{w} $ receive the same color, say red
(where $\u{v} \in \{ \u{a}, \u{b} , \u{c} \}$ and $\u{w} \in \{ \u{x}, \u{y} , \u{z} \}$).
But then $\u{a} \cup \u{b} , \u{a} \cup \u{x} , \u{b} \cup \u{y} \in R$ and $\u{x} \cup \u{y} \in B$
induce a bad copy of $ C ^{2r} _4$.

Assume for a contradiction that $K$ contains at least $\delta _1 n^{3r}$ red copies of $\mathcal C^{2r} _3$ and at least
$\delta _1 n^{3r}$ blue copies of $\mathcal C^{2r} _3$. For each red copy $T$ of $\mathcal C^{2r} _3$ in $K$, there are
at most $3r\binom{n}{3r-1}\binom{3r}{2r}\binom{2r}{r}$ blue copies of $\mathcal C^{2r} _3$ in $K$ which contain at least one vertex
from $V(T)$. (The $\binom{3r}{2r} \binom{2r}{r}$ term comes from the fact that, given any $3r$-set $V \subseteq V(K)$, there are
$\binom{3r}{2r} \binom{2r}{r}$ copies of $\mathcal C^{2r} _3$ in $K[V]$.) So there are at least $\delta _1 n^{3r}
-3r\binom{n}{3r-1}\binom{3r}{2r}\binom{2r}{r} \geq \delta _1 n^{3r} /2$ blue $\mathcal C^{2r} _3$ in $K$ that are disjoint from $T$.
Hence, there are at least $\delta _1 ^2 n^{6r} /2$ pairs $T_1, T_2$ of vertex-disjoint copies of $\mathcal C^{2r} _3$ such that
$T_1$ is red and $T_2$ is blue.

Now consider any bad copy $C$ of $C^{2r} _4$. There are $\binom{n-4r}{2r}$ $6r$-subsets of $V(K)$ which contain $V(C)$. For each
such $6r$-set $S$, there are $\binom{6r}{3r}\binom{3r}{2r} ^2 \binom{2r}{r} ^2$ pairs $T',T''$ of vertex-disjoint copies of
$\mathcal C^{2r}_3$ in $K$ such that $V(T') \cup V(T'')=S$. Together, this all implies that the number of bad $4r$-sets is at least
$$\frac{\delta _1 ^2}{2} n^{6r} \times \frac{1}{\binom{n-4r}{2r} \binom{6r}{3r}\binom{3r}{2r} ^2 \binom{2r}{r} ^2} \stackrel{(\ref{hiera})} {>} \delta n^{4r},$$
a contradiction to \text{(ii)} as desired.

Thus, there are less than $\delta _1 n^{3r}$ blue $\mathcal C^{2r}_3$ in $K$ or less than
$\delta _1 n^{3r}$ red $\mathcal C^{2r}_3$ in $K$.
Without loss of generality we assume there are less than $\delta _1 n^{3r}$ blue $\mathcal C^{2r}_3$ in $K$.
So (i) implies that $K[B]$ is an $n$-vertex $2r$-uniform hypergraph with at least $(1/2- \delta) \binom{n}{2r}$ edges and
 less than $\delta _1 n^{3r}$ copies of $\mathcal C^{2r}_3$.
To show $K[B]  = \mathcal B_{n,2r} \pm \eta n^{2r}$, we
will use Theorems~\ref{thm:RS} and~\ref{thm:Si}.

Since $\delta _1 \ll \eps$, Theorem~\ref{thm:RS} implies that we may remove at most $\eps  n^{2r}$ edges from $K[B]$ to obtain a
$\mathcal C^{2r} _3$-free hypergraph $K'[B]$. As $\eps \ll \eta$ and
$$e(K'[B])\geq \left(\frac{1}{2}-\delta \right) \binom{n}{2r}-\eps n^{2r} \geq \left(\frac{1}2-\sqrt{\eps} \right)\binom{n}{2r}$$
we may apply Theorem~\ref{thm:Si} to obtain that $K'[B]=\mathcal B_{n,2r} \pm \eta n^{2r}/2$. Consequently,
$K[B]=\mathcal B_{n,2r} \pm \eta n^{2r}$, as desired.
\end{proof}

\bigskip

Given two disjoint vertex sets $R$ and $B$ we define $K_{R,B}$ to be the complete bipartite graph with vertex classes $R$ and $B$.
 %Usually $A\times B$ denotes the set of all \emph{ordered} pairs $(a, b)$. But  we consider mostly unordered pairs below. Since $A$ and $B$ are disjoint, we have $|A\times B|= |A| |B|$

\noindent
{\bf Proof of Lemma~\ref{lem:GH}.}
Given $\eps > 0$ we define additional constants $\beta , \eta$ such that
\begin{align}\label{finalhier}
0<\beta \ll \eta \ll \eps , 1/r.
\end{align}
Further assume that $n$ is sufficiently large.

By assumption, either $G = K_{\frac{N}{2}, \frac{N}{2}} \pm \b N^2 $ or  $\overline{G} = K_{\frac{N}{2}, \frac{N}{2}} \pm \b N^2$, where $N:=\binom{n}{2r}$. It suffices to show that if $G = K_{\frac{N}{2}, \frac{N}{2}} \pm 2\b N^2 $ then $H= \mathcal B_{n,4r} \pm \eps n^{4r}$. Indeed, the edge set of $\overline{G}$ contain the edge set of $G(\overline{H})$ and all the pairs of intersecting $2r$-subsets of $V(H)$. Since there are $O(n^{4r-1})$ pairs of intersecting $2r$-subsets of $V(H)$, if $\overline{G} = K_{\frac{N}{2}, \frac{N}{2}}\pm \b N^2$, then $G(\overline{H}) = K_{\frac{N}{2}, \frac{N}{2}}\pm 2\b N^2$, which implies that $\overline{H} = \mathcal B_{n,4r} \pm \eps n^{4r}$, equivalently, $H= \overline{\mathcal B}_{n,4r} \pm \eps n^{4r}$, as desired.
%In both cases $H$ is in the extremal case with parameter $\eps$.

Assume that $G = K_{\frac{N}{2}, \frac{N}{2}} \pm 2 \b N^2$, namely, there is partition $R,B$ of $V(G)=\binom{V(H)}{2r}$ such that $|R| = |B| = N/2$ and $| E(G) \triangle E(K_{R,B}) | \le 2 \b N^2$. Let $K(H)$ denote the complete $2r$-uniform hypergraph whose vertex set is
$V(H)$. Since $R,B$ is a partition of $\binom{V(H)}{2r}$ we may view $R$ and $B$ as the color classes of a $2$-coloring of the edge set of $K(H)$. Let $K[R]$ denote the spanning subhypergraph of $K(H)$ induced by the edges of $R$.
Define $K[B]$ analogously.

Given a  $4r$-set $Q$  of vertices from $V(H)$ we say that $Q$ is \emph{bad} if there exists a partition of $Q= P_1\cup P_2\cup P_3\cup P_4$ where $|P_i|=r$ for all $1\leq i \leq 4$ and
such that exactly one of the four $2r$-sets $P_1 \cup P_2$, $P_2 \cup P_3$, $P_3 \cup P_4$, $P_4 \cup P_1$ receives one of the colors. First assume that this color is $B$. Without loss of generality, assume that $P_1\cup P_2$, $P_2 \cup P_3$, $P_3 \cup P_4\in R$ and $P_4 \cup P_1\in B$.
If $Q \in E(H)$, then $\{P_1\cup P_2, P_3\cup P_4 \}\in E(G)\cap \binom{R}{2}$.
On the other hand, if $Q \not\in E(H)$, then $\{ P_4\cup P_1,  P_2\cup P_3 \} \in E(\overline{G})\cap E(K_{R,B})$.
Therefore, one of $\{ P_1\cup P_2, P_3\cup P_4 \}$ and $\{ P_4\cup P_1,  P_2\cup P_3 \}$ is in $E(G) \triangle E(K_{R,B})$.
The same holds when exactly one of $P_1 \cup P_2$, $P_2 \cup P_3$, $P_3 \cup P_4$, $P_4 \cup P_1$ is colored $R$. Clearly two  distinct bad $4r$-sets lead to two different members of $E(G) \triangle E( K_{R,B})$.
Since $| E(G) \triangle E(K_{R,B}) | \le 2 \b N^2$, the number of bad $4r$-sets is at most $2 \b N^{2}$.

Since $\beta \ll \eta$, we may apply Lemma~\ref{lem:str} to $K(H)$ to obtain that either
$K[R]= \mathcal B_{n,2r} \pm \eta n^{2r}$ or $K[B] = \mathcal B_{n,2r} \pm \eta n^{2r}$.
Since the roles of $K[R]$ and $K[B]$ are interchangeable, we may assume that $K[R]= \mathcal B_{n,2r} \pm \eta n^{2r}$.
Let $X, Y$ denote a partition of $V(H)$ such that $| E(K[R]) \triangle E(\mathcal B_{n,2r}  [X,Y])| \leq \eta n^{2r}$.
We now use the structural information we have about $G$ and $K[R]$ to piece together that of $H$.

\begin{claim}
$H=\mathcal B_{n,4r}  \pm \eps n^{4r}$.
\end{claim}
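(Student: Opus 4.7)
The plan is to establish $|E(H) \triangle E(\mathcal{B}_{n,4r}(X,Y))| \le \eps n^{4r}$ via a double counting argument, exploiting the basic identity that for any $4r$-subset $e = P \sqcup Q$ with $|P| = |Q| = 2r$, we have $e \in E(H)$ iff $\{P,Q\} \in E(G)$. Combined with the two structural inputs $|E(G)\triangle E(K_{R,B})| \le 2\b N^2$ and $|R \triangle E_{\text{odd}}(X, Y)| \le \eta n^{2r}$ (the latter taken with $E_{\text{odd}}(X,Y)$ viewed as a family of $2r$-subsets), this will pin down $E(H)$ to within $\eps n^{4r}$ error from $E(\mathcal{B}_{n,4r}(X,Y))$.

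Call a $2r$-subset $P$ \emph{misclassified} if $P \in R \triangle E_{\text{odd}}(X, Y)$; by hypothesis at most $\eta n^{2r}$ such subsets exist. The key parity observation is that for any $4r$-set $e$ and any partition $e = P \sqcup Q$, the parities of $|P\cap X|$ and $|Q \cap X|$ agree iff $|e \cap X|$ is even. Hence, provided neither $P$ nor $Q$ is misclassified, $\{P, Q\} \in E(K_{R,B})$ iff $|e \cap X|$ is odd, which is iff $e \in E(\mathcal{B}_{n,4r}(X,Y))$.

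Now call a $4r$-set $e$ a \emph{violation} if $e \in E(H) \triangle E(\mathcal{B}_{n,4r}(X,Y))$, and let $V$ denote the number of violations. For each violation $e$ and each unordered partition $\{P,Q\}$ of $e$ in which neither part is misclassified, a short case analysis on the parity of $|e \cap X|$ and on whether $e \in E(H)$ shows that $\{P,Q\} \in E(G)\triangle E(K_{R,B})$: if $e \in E(H)$ with $|e\cap X|$ even then $\{P,Q\}\in E(G)\setminus E(K_{R,B})$, while if $e\notin E(H)$ with $|e\cap X|$ odd then $\{P,Q\}\in E(K_{R,B})\setminus E(G)$. Out of the $C:=\binom{4r}{2r}/2$ unordered partitions of $e$, at most $\mu(e)$ are ruined by a misclassified part, where $\mu(e)$ counts the misclassified $2r$-subsets of $e$. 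Since distinct violations give rise to distinct disjoint pairs $\{P,Q\} \in E(G) \triangle E(K_{R,B})$ (as $e = P \cup Q$), summing over violations yields
\[
C \cdot V \;-\; \sum_{e \in \binom{V(H)}{4r}} \mu(e) \;\le\; \sum_{e \text{ violation}} (C - \mu(e)) \;\le\; |E(G) \triangle E(K_{R,B})| \;\le\; 2\b N^2.
\]
Using $\sum_e \mu(e) \le \eta n^{2r}\binom{n-2r}{2r}$ together with $N^2 \le \binom{n}{2r}^2$ and the hierarchy $\beta \ll \eta \ll \eps$, one obtains $V \le \eps n^{4r}$. Because $n$ is divisible by $4r$ and $K[R] = \mathcal{B}_{n,2r} \pm \eta n^{2r}$ forces $|X|=|Y|=n/2$, the hypergraph $\mathcal{B}_{n,4r}(X,Y)$ is isomorphic to $\mathcal{B}_{n,4r}$, giving $H = \mathcal{B}_{n,4r} \pm \eps n^{4r}$. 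The main obstacle is simply bookkeeping: making sure the two distinct layers of approximation (between $G$ and $K_{R,B}$, and between $R$ and $E_{\text{odd}}(X,Y)$) compose cleanly so that every ``bad'' $4r$-subset is witnessed by sufficiently many pairs in $E(G)\triangle E(K_{R,B})$ without double-counting.
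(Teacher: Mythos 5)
Your argument is correct, and it takes a genuinely different (and arguably cleaner) route than the paper. The paper separately bounds $|E(H)\setminus E(\mathcal B_{n,4r}[X,Y])|$ and $|E(\mathcal B_{n,4r}[X,Y])\setminus E(H)|$: for each direction it singles out one canonical partition of the offending $4r$-set $Q$ into two $2r$-tuples of controlled parity (both even, or one even and one odd), reduces to a count of special pairs in $E(G)$ or $E(\overline G)$, and then bounds those pairs by three-way case analysis on membership in $R$ and $B$. You instead treat both directions at once: declare a $2r$-set misclassified if it lies in $R\triangle E_{\text{odd}}(X,Y)$, observe that for any partition $\{P,Q\}$ of a violating $4r$-set $e$ in which neither part is misclassified one must have $\{P,Q\}\in E(G)\triangle E(K_{R,B})$, and then double count: each violation contributes at least $C-\mu(e)$ such pairs with $C=\tfrac12\binom{4r}{2r}$, distinct violations contribute disjoint sets of pairs because $P\cup Q$ determines $e$, and $\sum_e\mu(e)\le \eta n^{2r}\binom{n-2r}{2r}$. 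This buys a single unified inequality $CV\le 2\beta N^2+\sum_e\mu(e)$ in place of the paper's six separate sub-cases, at the cost of a slightly more delicate bookkeeping definition ($\mu(e)$). Both arguments use the same two structural inputs ($G$ close to $K_{R,B}$; $K[R]$ close to $\mathcal B_{n,2r}[X,Y]$) and the same elementary parity fact, so the gain is purely in organization. Your final remark that $|X|=|Y|=n/2$ because $n$ is even and $\mathcal B_{n,2r}$ is balanced is right, and the hierarchy $\beta\ll\eta\ll\eps$ closes the estimate.
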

Recall that given a $2r$-tuple $\underline{x} \in \binom{V(H)}{2r}$ we say that $\underline{x}$ is \emph{even} if $\u{x}$ contains an
even number of elements from $X$ (and so an even number of elements from $Y$). Otherwise, we say that $\u{x}$ is \emph{odd}.
Thus, the edge set of $\mathcal B_{n,2r}  [X,Y]$ is precisely the set of odd $2r$-tuples.

Our ultimate aim is to show that
\begin{align}\label{targeta}
|E(H)\triangle E(\mathcal B_{n,4r}  [X,Y])|\leq \eps n^{4r} .
\end{align}

First we show that $|E(H)\backslash E(\mathcal B_{n,4r}  [X,Y])|\leq \eps n^{4r} /2$. Consider any $4r$-tuple $Q$
from $ E(H)\backslash E(\mathcal B_{n,4r} [X,Y])$.
Since $Q \not \in  E(\mathcal B_{n,4r} [X,Y])$ (thus $|Q\cap X|$ is even), $Q$ can be partitioned into $2r$-tuples $\u{x}$, $\u{y}$ such that both $\u{x}$ and $\u{y}$ are even. (For example, if $|Q\cap X|\ge 2r$, then let $\u{x}$ be a $2r$-subset of $Q\cap X$; otherwise let $\u{x}$ be a $2r$-subset of $Q\cap Y$. Since $|Q\cap X|$ is even, $\u{y}$ is even.) As $Q \in E(H)$ we have that $\{ \u{x} , \u{y} \} \in E(G)$. Thus,
$$ |E(H)\backslash E(\mathcal B_{n,4r} [X,Y])| \leq |\Sigma|,$$
where $\Sigma$ is the set of all disjoint pairs of $2r$-tuples $\u{w}, \u{z} \in \binom{V(H)}{2r}$ such that $\u{w}$ and $\u{z}$ are even
and $\{\u{w} , \u{z} \} \in E(G)$.

Since $K[R] = \mathcal B_{n,2r} [X,Y] \pm \eta n^{2r}$, there are at most $\binom{\eta n^{2r}}{2}\leq \eta ^2 n^{4r}$ pairs
$\{ \u{w},\u{z} \} \in \Sigma$ such that $\u{w},\u{z} \in R$. Similarly, there are at most $\eta n^{2r} |B|\leq \eta n^{4r}$ pairs
$(\u{w},\u{z}) \in \Sigma$ such that $\u{w}\in B$ and $\u{z} \in R$.
Given any pair $(\u{w},\u{z}) \in \Sigma$ such that $\u{w},\u{z} \in B$, by definition of $\Sigma$,  we have $\{\u{w}, \u{z} \} \in E(G)$.
However, $G=K_{R,B} \pm 2 \beta N^2$, so there are most $2 \beta N^2 \leq 2 \beta  n^{4r}$ such pairs in $\Sigma$.
Together, this all implies that $|\Sigma| \leq (\eta ^2 +\eta+2 \beta)n^{4r} \leq \eps n^{4r}/2 $. So indeed,
$|E(H)\backslash E(\mathcal B_{n,4r} [X,Y])|\leq \eps n^{4r}/2 $.

Next we show that $|E(\mathcal B_{n,4r} [X,Y])\backslash E(H)|\leq \eps n^{4r} /2$. Consider any $4r$-tuple
 $Q $ from $
E(\mathcal B_{n,4r} [X,Y])\backslash E(H)$. Since $Q \in  E(\mathcal B_{n,4r} [X,Y])$,
$Q$ can be partitioned into $2r$-tuples $\u{x}$, $\u{y}$
such that $\u{x}$ is even and $\u{y}$ is odd.
(For example, if $|Q\cap X|\ge 2r$, then let $\u{x}$ be a $2r$-subset of $Q\cap X$; otherwise let $\u{x}$ be a $2r$-subset of $Q\cap Y$. Since $|Q\cap X|$ is odd, $\u{y}$ is odd.)
As $Q \not \in E(H)$ we have that $\{ \u{x} , \u{y} \} \in E(\overline{G})$.
Thus,
$$ |E(\mathcal B_{n,4r} [X,Y])\backslash E(H)| \leq |\Gamma|,$$
where $\Gamma$ is the set of all disjoint pairs of $2r$-tuples $\u{w}, \u{z} \in \binom{V(H)}{2r}$ such that $\u{w}$ is even,
$\u{z}$ is odd
and $\{\u{w} , \u{z} \} \in E(\overline{G})$.

Since $K[R] = \mathcal B_{n,2r} [X,Y] \pm \eta n^{2r}$, we have that $K[B] = \overline{\mathcal B}_{n,2r} [X,Y] \pm \eta n^{2r}$.
Thus, there are at most $\eta n^{2r} \binom{n}{2r} \leq \eta n^{4r}$ pairs $\{\u{w},\u{z} \} \in \Gamma$ such that $\u{w}$ is even and $\u{w} \in R$. Similarly, there are at most $\eta n^{4r}$ pairs $\{ \u{w},\u{z} \} \in \Gamma$ such that $\u{z}$ is odd and $\u{z} \in B$.
Given any pair $\{ \u{w},\u{z}\} \in \Gamma$ such that $\u{w}\in R$ is odd and $\u{z} \in B$ is even,
 by definition of $\Gamma$, $\{\u{w}, \u{z} \} \in E(\overline{G})$.
However, $\overline{G}=\overline{K}_{R,B} \pm 2 \beta N^2$, so there are most $2 \beta N^2 \leq 2 \beta  n^{4r}$ such pairs in $\Gamma$.
Together this all implies that $|\Gamma| \leq (2\eta  +\eta+2 \beta)n^{4r} \leq \eps n^{4r}/2$. So indeed,
$|E(\mathcal B_{n,4r} [X,Y])\backslash E(H)|\leq \eps n^{4r}/2 $.  Therefore (\ref{targeta}) is satisfied, as desired.

\medskip
\endproof
\section*{Acknowledgements}
This research was carried out whilst the first author was visiting the Department of Mathematics and Statistics
of Georgia State University. This author would like to thank the department for the hospitality he received.

The authors also thank the referees for their valuable comments that improved the presentation of this paper.

\medskip

{\footnotesize \obeylines \parindent=0pt

\begin{tabular}{lll}

Andrew Treglown                     &\ &  Yi Zhao \\
Computer Science Institute					&\ &  Department of Mathematics and Statistics \\
Faculty of Mathematics and Physics  &\ &  Georgia State University \\
Charles University                  &\ &  Atlanta \\
Malostransk\'e N\'am\v{e}st\'i 25		&\ &  Georgia 30303\\
118 00 Prague												&\ &  USA\\
Czech Republic											&\ &
\end{tabular}
}

{\footnotesize \parindent=0pt

\it{E-mail addresses}:
\tt{treglown@kam.mff.cuni.cz}, \tt{yzhao6@gsu.edu}}

\section*{Appendix}
In this section we prove Theorem~\ref{4thm}. Because of Theorem~\ref{mainthm}, it suffices to prove the following fact.

\begin{fact}\label{4fact} For all $n\geq 12$ divisible by $4$,
$$\delta (n,4,2) \leq \frac{n^2}{4}-\frac{5n}{4} - \frac{\sqrt{n-3}}{2}+\frac{3}{2}.$$
Furthermore, there are infinitely many values of $n$ such that the following holds:
\begin{itemize}
\item $\delta (n,4,2)=\delta_2(\overline{\mathcal B}_{n,4}(t))=\frac{n^2}{4}-\frac{5n}{4} - \frac{\sqrt{n-3}}{2}+\frac{3}{2}$ for some $t$;
\item  $\overline{\mathcal B} _{n,4} (t)$ does not contain a perfect matching.
\end{itemize}
\end{fact}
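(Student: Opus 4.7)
The statement has two parts: a uniform upper bound on $\delta(n,4,2)$ and an infinite family of tight examples. My plan is essentially a one-variable optimization. Because $k=4$ is even, $|e\cap A|$ has the same parity as $|e\cap B|$ for every $4$-edge $e$, so both $\mathcal{B}_{n,4}(A,B)$ and $\overline{\mathcal{B}}_{n,4}(A,B)$ are invariant under swapping $A$ and $B$. Every $H\in \mathcal{H}_{\text{ext}}(n,4)$ therefore has the form $\mathcal{B}_{n,4}(A,B)$ or $\overline{\mathcal{B}}_{n,4}(A,B)$ with $|A|\le |B|$, subject to the parity constraints stated in the definition of $\mathcal{H}_{\text{ext}}(n,4)$. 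Writing $m:=n/2$ (an even integer) and $|A|=m-s$, $|B|=m+s$ with integer $s\ge 0$, a direct case analysis of $d_H(\{u,v\})$ according to where $u,v$ lie yields three candidate values; since $s\ge 0$, the pair-in-$A$ value for $\overline{\mathcal{B}}$ (and the pair-in-$B$ value for $\mathcal{B}$) is always dominated, so after expansion
\begin{align*}
\delta_2(\mathcal{B}_{n,4}(A,B))&=\min\bigl\{m^2-2m-s^2-2s,\ m^2-3m+2+s^2\bigr\},\\
\delta_2(\overline{\mathcal{B}}_{n,4}(A,B))&=\min\bigl\{m^2-3m+3+s^2-2s,\ m^2-2m+1-s^2\bigr\}.
\end{align*}

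In each of these two pairs, setting the two expressions equal produces a quadratic in $s$ with relevant positive root $s^{\ast}=(\sqrt{2m-3}-1)/2$ for $\mathcal{B}$ and $s^{\ast\ast}=(\sqrt{2m-3}+1)/2$ for $\overline{\mathcal{B}}$. A routine monotonicity check (in the $\mathcal{B}$ case the first summand is decreasing and the second is increasing on $[0,\infty)$; in the $\overline{\mathcal{B}}$ case the first summand is minimized at $s=1$ but one verifies for $m\ge 6$ that the value at $s=0$ does not exceed the value at $s^{\ast\ast}$) shows that for $n\ge 12$ these are the points at which the respective $\min\{\cdot,\cdot\}$ attains its maximum over $s\in[0,\infty)$. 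The key algebraic coincidence is that both maxima simplify to the common value
\[
m^2-\tfrac{5m}{2}+\tfrac{3}{2}-\tfrac{\sqrt{2m-3}}{2}=\tfrac{n^2}{4}-\tfrac{5n}{4}+\tfrac{3}{2}-\tfrac{\sqrt{n-3}}{2},
\]
which is exactly the claimed bound. Since every admissible integer $s$ satisfies $\min\{\cdot,\cdot\}\le$ this common value, the upper bound $\delta(n,4,2)\le \tfrac{n^2}{4}-\tfrac{5n}{4}-\tfrac{\sqrt{n-3}}{2}+\tfrac{3}{2}$ follows.

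For the sharpness, I would choose $n$ so that $s^{\ast\ast}$ is a positive odd integer, which makes $|A|=m-s^{\ast\ast}$ odd and hence $\overline{\mathcal{B}}_{n,4}(A,B)\in \mathcal{H}_{\text{ext}}(n,4)$. This requires $\sqrt{2m-3}\equiv 1\pmod 4$; writing $\sqrt{2m-3}=4l+1$ yields $n=16l^{2}+8l+4$, $s^{\ast\ast}=2l+1$, and $|A|=8l^{2}+2l+1$, all of the correct parity. For every $l\ge 1$ the resulting hypergraph attains the bound exactly (with $t=-(2l+1)$ in the notation $\overline{\mathcal{B}}_{n,4}(t)$) and fails to have a perfect matching by the discussion immediately following the definition of $\mathcal{H}_{\text{ext}}(n,4)$, supplying infinitely many tight examples. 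The main obstacle is the algebraic identity asserting that the two different crossover calculations (for the pairs $(h,k)$ coming from $\mathcal{B}$ and $(f,g)$ coming from $\overline{\mathcal{B}}$) produce the same expression; without this coincidence, one would obtain two separate bounds and reconciling them, as well as identifying which family actually dominates for each residue class of $n$, would be considerably more delicate.
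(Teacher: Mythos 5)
Your proposal is correct and follows essentially the same route as the paper: compute the three types of pair-degrees for $\mathcal B_{n,4}$ and $\overline{\mathcal B}_{n,4}$ as quadratics in the class imbalance, locate the crossing points $(\mp 1+\sqrt{n-3})/2$, observe both optima equal $\tfrac{n^2}{4}-\tfrac{5n}{4}-\tfrac{\sqrt{n-3}}{2}+\tfrac32$ (with the same separate check at imbalance $0$ for $\overline{\mathcal B}$ when $n\ge 12$), and realize sharpness by choosing $n$ so that the optimal imbalance is an odd integer. Your family $n=16l^2+8l+4$ is just the exponent-$2$ case of the paper's family $n=(4m+1)^{2s}+3$, and the sign conventions ($|A|=m-s$ versus $|A|=n/2+t$) are immaterial.
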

\proof
Suppose that $n \in \mathbb N$ is divisible by $4$ and let $t$ be an integer such that
$0\leq t<n/2$. Denote the vertex classes of $\mathcal B_{n,4} (t)$ by $A$ and $B$. Therefore $|A|=n/2+t$ and $|B|=n/2-t$.

Given  distinct $v_1, v_2 \in A$,
\begin{align*}
d_{\mathcal B_{n,4} (t)} (v_1v_2) & =(n/2+t-2)(n/2-t)
=\frac{n^2}{4}-n-t^2+2t.
\end{align*}
Given  distinct $w_1, w_2 \in B$,
\begin{align*}
d_{\mathcal B_{n,4} (t)} (w_1w_2)  =(n/2+t)(n/2-t-2)
=\frac{n^2}{4}-n-t^2-2t.
\end{align*}
Given any $v_1 \in A$ and $w_1 \in B$,
\begin{align*}
d_{\mathcal B_{n,4} (t)}(v_1w_1) &= \binom{n/2+t-1}{2} +\binom{n/2-t-1}{2} \\
&= \frac{1}{2} [(n/2+t-1)(n/2+t-2)+(n/2-t-1)(n/2-t-2)] \\
&= \frac{n^2}{4}-\frac{3n}{2}+t^2+2.
\end{align*}
Thus, $d_{\mathcal B_{n,4} (t)}(v_1v_2)\geq d_{\mathcal B_{n,4} (t)} (w_1w_2)$ for all $v_1,v_2 \in A$ and $w_1,w_2 \in B$.
Notice that $n^2/4-n-t^2-2t$ decreases as $t$ increases and that $n^4/4 -3n/2+t^2+2$ increases as $t$ increases (for $t\geq 0$).
For fixed $n$ consider the equation
%\begin{align}\label{eqex2}
\[
\frac{n^2}{4}-n-t_1 ^2 -2t_1 =\frac{n^2}{4}-\frac{3n}{2}+t^2 _1 +2 \ \ \text{ where $t_1 \geq 0$.}
\]
%\end{align} Note that (\ref{eqex2})
It gives that $t^2 _1+t_1+(1-n/4)=0$ and so
$$t_1=\frac{-1+\sqrt{n-3}}{2}.$$
This analysis implies that, for all $0\leq t<n/2$,
\begin{align}\label{tar1}
\delta _2 (\mathcal B_{n,4} (t)) \leq \frac{n^2}{4}-\frac{3n}{2}+t_1 ^2 +2 = \frac{n^2}{4}-\frac{5n}{4}-\frac{\sqrt{n-3}}{2}+\frac{3}{2}.
\end{align}
Further, since $\mathcal B_{n,4} (t)$ is isomorphic to $\mathcal B_{n,4} (-t)$ for all $0\leq t <n/2$, (\ref{tar1}) holds for all $-n/2<t<n/2$.

Now consider $\overline{\mathcal B}_{n,4} (t)$ for any $0\leq t <n/2$ and assume $A$ and $B$ are the vertex classes of
$\overline{\mathcal B}_{n,4} (t)$.
Given  distinct $v_1, v_2 \in A$,
\begin{align*}
d_{\overline{\mathcal B}_{n,4} (t)} (v_1v_2) & =\binom{n/2+t-2}{2}+\binom{n/2-t}{2} \\ &
=\frac{1}{2}\left[ (n/2+t-2)(n/2+t-3)+(n/2-t)(n/2-t-1)\right] \\ &
=\frac{n^2}{4} -\frac{3n}{2}+t^2 -2t +3.
\end{align*}Given  distinct $w_1, w_2 \in B$,
\begin{align*}
d_{\overline{\mathcal B}_{n,4} (t)} (w_1w_2)  =\binom{n/2-t-2}{2}+\binom{n/2+t}{2}
=\frac{n^2}{4} -\frac{3n}{2}+t^2 +2t +3.
\end{align*}
Given any $v_1 \in A$ and $w_1 \in B$,
\begin{align*}
d_{\overline{\mathcal B}_{n,4} (t)} (v_1w_1) =(n/2+t-1)(n/2-t-1)
=\frac{n^2}{4} -n-t^2 +1.
\end{align*}

Notice that $d_{\overline{\mathcal B}_{n,4} (t)} (v_1v_2) \le d_{\overline{\mathcal B}_{n,4} (t)}
 (w_1w_2)$ for all $v_1 ,v_2 \in A$ and $w_1, w_2 \in B$.
Further, when $t \geq 1$, $n^2/4-3n/2+t^2-2t+3$ increases as $t$ increases and that $n^2/4-n-t^2+1$ decreases as $t$ increases.
Thus, for a fixed $n$ the value of $t\ge 1$ which maximizes the minimum $2$-degree of $\overline{\mathcal B}_{n,4} (t)$ satisfies
%\begin{align}\label{stabledeg}
$$n^2/4-3n/2+t^2-2t+3= n^2/4-n-t^2+1,$$
%\end{align} Hence (\ref{stabledeg})
which gives that $t^2 -t +(1-n/4)=0$. Therefore as $t \geq 1$ we have that
$$t=\frac{1+\sqrt{n-3}}{2}.$$
This analysis implies that, for all $1 \leq t<n/2$,
\begin{align}\label{tar2}
\delta _2 (\overline{\mathcal B}_{n,4} (t)) \leq
\frac{n^2}{4}-n-\left(\frac{1+\sqrt{n-3}}{2}\right)^2+1=  \frac{n^2}{4}-\frac{5n}{4}-\frac{\sqrt{n-3}}{2}+\frac{3}{2}.
\end{align}
It is easy to see that $\delta _2 (\overline{\mathcal B}_{n,4} (0)) = \frac{n^2}{4} -\frac{3n}{2}+3 \le \frac{n^2}{4}-\frac{5n}{4}-\frac{\sqrt{n-3}}{2}+\frac{3}{2}$ when $n\ge 12$. Thus \eqref{tar2} holds for all $0 \leq t<n/2$.
Since $\overline{\mathcal B}_{n,4} (t)$ is isomorphic to $\overline{\mathcal B}_{n,4} (-t)$
for all $0\leq t <n/2$, (\ref{tar2}) actually holds for all $-n/2<t<n/2$.
Thus, (\ref{tar1}) and (\ref{tar2}) imply that
$$\delta (n,4,2) \leq \frac{n^2}{4}-\frac{5n}{4}-\frac{\sqrt{n-3}}{2}+\frac{3}{2},$$
as desired.

Notice that there are values of $n$ such that $n$ is divisible by $4$ and where $(1+\sqrt{n-3})/2$ is an odd integer.
Indeed, let $n:=(4m+1)^{2s}+3$ for some $m,s \in \mathbb N$. Then $n = (4m+1)^{2s}+3 \equiv 1+3 \equiv 0 \text{ mod } 4$.
Since $(4m+1)^{s}$ is odd, clearly $(1+\sqrt{n-3})/2= (1+(4m+1)^s)/2$ is an integer. Further
if $(1+(4m+1)^s)/2= 2x$ for some $x \in \mathbb N$ then $(4m+1)^s =4x-1 \equiv 3 \text{ mod } 4$, a contradiction
as $(4m+1)^s \equiv 1 \text{ mod } 4$. Hence $(1+\sqrt{n-3})/2$ is  odd.

For values of $n$ where $n$ is divisible by $4$ and where $t:=(1+\sqrt{n-3})/2$ is an odd integer, we have that
$$\delta _2(\overline{\mathcal B}_{n,4} (t)) = \frac{n^2}{4}-n-t^2+1= \frac{n^2}{4}-\frac{5n}{4} - \frac{\sqrt{n-3}}{2}+\frac{3}{2}.$$
Note though that $|A|=n/2+t $ is odd, therefore,  $\overline{\mathcal B}_{n,4} (t) \in \mathcal H_{\text{ext}} (n,4)$ and so it does
not contain a perfect matching.
Thus, the second part of Fact~\ref{4fact} is proven.\endproof

\end{document}